\setlist[itemize]{leftmargin=*}
\let\ams@starttoc\@starttoc
\let\@starttoc\ams@starttoc
\patchcmd{\@starttoc}{\makeatletter}{\makeatletter\parskip\z@}{}{}
\newcommand{\boxedred}[1]{\textcolor{red}{\fbox{\normalcolor\m@th$#1$}}}
\newcommand{\eqlabel}[1]{\refstepcounter{equation}\textup{\tagform@{\theequation}}\label{#1}}
\newcommand{\N}{\mathbb{N}}
\newcommand{\Z}{\mathbb{Z}}
\newcommand{\R}{\mathbb{R}}
\newcommand{\C}{\mathbb{C}}
\newcommand\restr[2]{
		#1
		\vphantom{\big|}
		\Big|_{#2}
}
\newcommand{\de}{\partial}
\newcommand{\interna}[1]{\accentset{\circ}{#1}}
\newcommand{\mz}{\frac{1}{2}}
\newcommand{\ang}[1]{\left\langle#1\right\rangle}
\newcommand{\uno}{\bm{1}}
\newcommand{\nin}{\not\in}
\newcommand{\weakstarto}{\stackrel{*}{\rightharpoonup}}
\newcommand{\inject}{\hookrightarrow}
\DeclareMathOperator{\sgn}{sgn}
\DeclareMathOperator{\dist}{dist}
\newtheoremstyle{boldstyle}
{\glueexpr\parskip*2\relax}                    
{\parskip}                    
{\upshape}                   
{}                           
{\bfseries}                   
{.}                          
{.2em}                       
{}  
\theoremstyle{boldstyle}
\newtheorem{definition}{Definition}[section]
\newtheorem{thm}[definition]{Theorem}
\newtheorem{lemmaen}[definition]{Lemma}
\newtheorem{corollary}[definition]{Corollary}
\newtheorem{proposition}[definition]{Proposition}
\newtheorem{rmk}[definition]{Remark}
\newtheorem*{definition*}{Definition}
\newtheorem*{thm*}{Theorem}
\newtheorem*{lemmaen*}{Lemma}
\newtheorem*{corollary*}{Corollary}
\newtheorem*{proposition*}{Proposition}
\newtheorem*{notazen*}{Notation}
\newtheorem*{rmk*}{Remark}
\newtheorem*{example*}{Example}
\newtheorem*{claim*}{Claim}
\newtheorem*{ack}{Acknowledgements}
\newcommand{\obar}[1]{\overline{#1}}
\newcommand{\set}[1]{\left\{#1\right\}}
\newcommand{\pa}[1]{\left(#1\right)}
\newcommand{\bra}[1]{\left[#1\right]}
\newcommand{\abs}[1]{\left|#1\right|}
\newcommand{\babs}[1]{\big|#1\big|}
\newcommand{\norm}[1]{\left\|#1\right\|}
\newcommand{\bnorm}[1]{\big\|#1\big\|}
\newcommand{\iA}{\accentset{\circ}{A}}
\newcommand{\env}{\R^m}
\newcommand{\dvg}{d\text{vol}_g}
\renewcommand{\N}{\mathcal{N}}
\newcommand{\tu}{{\tilde u}}
\newcommand{\obt}{{\obar{t}}}
\newcommand{\teich}{Teichm\"uller\xspace}
\newcommand{\tch}{\mathcal{T}}
\newcommand{\ls}{\lesssim}
\newcommand{\rz}{\mathcal{R}}
\newcommand{\ql}{(-\Delta)^{1/4}}
\newcommand{\mql}{(-\Delta)^{-1/4}}
\newcommand{\ml}{(-\Delta)^{1/2}}
\newcommand{\hardy}{{\mathcal{H}^1}}
\newcommand{\hmz}{{\dot H^{1/2}}}
\newcommand{\hmzb}{{\dot H^{1/2}\cap L^\infty}}
\newcommand{\hmmz}{{\dot H^{-1/2}}}
\newcommand{\subman}{\mathcal{N}}
\renewcommand{\hat}{\widehat}
\renewcommand{\tilde}{\widetilde}
\renewcommand{\bar}{\overline}
\DeclareFontFamily{U}{mathx}{\hyphenchar\font45}
\DeclareFontShape{U}{mathx}{m}{n}{
	<5> <6> <7> <8> <9> <10>
	<10.95> <12> <14.4> <17.28> <20.74> <24.88>
	mathx10
}{}
\DeclareSymbolFont{mathx}{U}{mathx}{m}{n}
\DeclareMathAccent{\widecheck}{0}{mathx}{"71}
\def\media{
	\,\ThisStyle{\ensurestackMath{%
			\stackinset{c}{.2\LMpt}{c}{.5\LMpt}{\SavedStyle-}{\SavedStyle\phantom{\int}}}%
		\setbox0=\hbox{$\SavedStyle\int\,$}\kern-\wd0}\int
}
\subjclass[2010]{58E20, 58E12, 35B65, 35R11, 42B37}
\keywords{Fractional harmonic maps,  free boundary minimal surfaces, regularity of solutions, commutator estimates}
\begin{document}
	
\title[Free boundary minimal surfaces: a nonlocal approach]{Free boundary minimal surfaces: \\ a nonlocal approach}
\author{Francesca Da Lio}
\thanks{Department of Mathematics, ETH Z\"urich, R\"amistrasse 101, 8092 Z\"urich, Switzerland.}
\author{Alessandro Pigati}

\begin{abstract}
Given a $C^k$-smooth closed embedded manifold $\N\subset{\mathbb R}^m$, with $k\ge 2$, and a compact connected smooth Riemannian surface $(S,g)$ with $\partial S\neq\emptyset$, we  consider {\em   $\mz$-harmonic maps} $u\in H^{1/2}(\de S,\N)$. These maps are  critical points of the nonlocal energy
\begin{equation}\label{energyab} E(f;g):=\int_S\abs{\nabla\tilde u}^2\,d\text{vol}_g,\end{equation}
where $\tilde u$ is the harmonic extension of $u$ in $S$. We express the energy \eqref{energyab} as a sum of the $\mz$-energies at each boundary component of $\partial S$ (suitably identified with the circle $\mathcal S^1$), plus a quadratic term which is continuous in the $H^s(\mathcal S^1)$ topology, for any $s\in\R$. We show the $C^{k-1,\delta}$ regularity of $\mz$-harmonic maps. We also establish a connection between free boundary minimal surfaces and critical points of $E$ with respect to variations of the pair $(f,g)$, in terms of the Teichm\"uller space of $S$.
\end{abstract}

\maketitle
\tableofcontents

\section{Introduction}

Let $(S,g)$ be a connected $C^\infty$-smooth surface with nonempty boundary $\de S$, equipped with a smooth metric $g$ ($S$ is not necessarily oriented) and let $\N\subset\env$ be  an embedded closed (i.e. compact without boundary) $C^2$-smooth submanifold.
\par We set $$H^{1/2}(\de S,\N):=\set{f\in H^{1/2}(\de S,\env):f(x)\in\N\text{ for a.e. }x}.$$
	Given a map $f\in H^{1/2}(\de S,\N)$, we define the $\mz$-\emph{energy} of $f$ to be
	\begin{equation}\label{energy} E(f;g):=\int_S\babs{\nabla\tilde f}^2\,d\text{vol}_g,\end{equation}
	  Here $\tilde{f}$ denotes the harmonic extension of $f$, i.e. the unique harmonic map $\tilde{f}\in H^1(S,\env)$ such that $\restr{\tilde{f}}{\de S}=f$.
 We observe that $E(f,g)$ depends only on the conformal class of $g$.

\begin{definition}\label{halfhar}
	A map $u\in H^{1/2}(\de S,\N)$
	is called $\mz$-\emph{harmonic} if $u$ is a critical point for the $\mz$-energy $E=E(\cdot;g)$, in the following sense: for any $\phi\in C^\infty(\de S,\env)$ we have
\begin{equation}\label{halfharmeq}
\left.\frac{d}{dt}E(\Pi(u+t\phi))\right|_{t=0}=0,\end{equation}
	where  $\Pi:\mathcal{U}\to\N$ is any fixed $C^2$ projection, defined on some neighborhood $\mathcal{U}$ of $\N$.
\end{definition}
Definition \ref{halfhar} extends the one introduced for the first time in \cite{DLR09} in the case $S=\mathbb{D}$ or in the noncompact case $S=\mathbb{H}$ ($\mathbb{D}$ and $\mathbb{H}$ being the unit disk and the upper half-plane, respectively).
 One can check that $\Pi(u+t\phi)=u+tv+o(t)$  in $H^{1/2}(\de S,\R^m)$ as $t\to 0$  where
 $v:=d\Pi(u)[\phi]$ and therefore\footnote{The normal derivative $\frac{\de\tu}{\de\nu}\in H^{-1/2}(\de S,\R^m)$ is defined precisely by asking that, for any $v\in H^{1/2}(\de S,\R^m)$,
 	\[ \int_S\ang{\nabla\tu;\nabla\tilde v}\,\dvg=\int_{\de S}\frac{\de\tu}{\de\nu}\cdot v\,\dvg. \]}
	\[ \left.\frac{d}{dt}E(\Pi(u+t\phi))\right|_{t=0}=2\int_S\ang{\nabla\tu;\nabla\tilde v}\,\dvg=2\int_{\de S}d\Pi(u)[\phi]\cdot\frac{\de\tu}{\de\nu}\,\dvg.\]
	By a standard density argument, $u$ is $\mz$-harmonic if and only if 
	 \begin{equation}\label{eq:hhchar1}\int_{\de S}d\Pi(u)[\phi]\cdot\frac{\de\tu}{\de\nu}\,\dvg=0,\end{equation}
	  for any $\phi\in L^\infty\cap H^{1/2}$ (which is a Banach algebra), which   is in turn equivalent to ask
	 \begin{equation}\label{eq:hhchar2} \int_{\de S}\frac{\de\tu}{\de\nu}\cdot v\,\dvg=0 \end{equation}
	 for any $v\in L^\infty\cap H^{1/2}(\de S,\R^m)$ satisfying $v\in T_u\N$ a.e. In particular, the definition is independent of the choice of $\Pi$.

Let $P^T(\xi)$ denote the orthogonal projection onto the tangent space $T_\xi\N$, for $\xi\in\N$, and observe that $P^T\in C^1(\N,\R^{m\times m})$. In the paper we will also call $P^N:=I-P^T$ the projection onto the normal space. The same argument showing the equivalence of \eqref{eq:hhchar1} and \eqref{eq:hhchar2} proves that one can replace $d\Pi$ with $P^T$ in \eqref{eq:hhchar1} (notice that, on $\N$, $P^T$ is the differential of the nearest point projection, canonically defined near $\N$, but we cannot use this projection in \eqref{halfharmeq} as it is merely $C^1$).
	  Hence, 	 \begin{equation}\label{eq:hhfchar} \begin{split} u\text{ is }\mz\text{-harmonic}\quad&\Leftrightarrow\quad\int_{\de S}\frac{\de\tu}{\de\nu}\cdot P^T(u)v\,\dvg=0,\quad\forall v\in L^\infty\cap H^{1/2}(\de S,\R^m) \\
	 &\Leftrightarrow\quad P^T(u)\frac{\de\tu}{\de\nu}=0\quad\text{in }\mathcal{D}'(\de S). \end{split} \end{equation}
	 
Solutions to the last equation are of special geometric interest because they are strictly connected with the so-called  {\em free boundary minimal surfaces}, in the following sense.

\begin{definition}
 \label{freedef}
 We say that a map $\tilde u\in C^2(S,R^m)$  is a
 \emph{free boundary (branched) minimal immersion}  with supporting manifold $\N$  if it is a harmonic map which is also conformal (with the possible exception of finitely many points where $d\tilde u$ vanishes) and meets $\N$ orthogonally, i.e.
  \[
 P^T(u)\frac{\de\tu}{\de\nu}=0\quad\text{on }\de S.
\] 
  \end{definition}
In the case  $S=\mathbb{D}$   the following connection between  $\mz$-harmonic   maps $u:{\mathcal{S}}^1\to\N$ and the free boundary discs is now a well-known fact
(see e.g. \cite{DaL15, MS15,DaL17} and Remark \ref{diskconf}).

 \begin{proposition}
\label{pr-I.1} 
The harmonic extension $\tilde u$ of a $\mz$-harmonic map $u\in{H}^{1/2}(\mathcal S^1,\N)$ is conformal. Geometrically, this means that $u$ is the boundary of a free boundary (branched) minimal disc.
\end{proposition}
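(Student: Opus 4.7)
The plan is to show that the Hopf differential of $\tilde u$ vanishes identically on $\mathbb{D}$. Define, for $z\in\mathbb{D}\subset\C$,
\[ \phi(z):=\ang{\de_z\tilde u;\de_z\tilde u}=\tfrac14\bigl(|\de_x\tilde u|^2-|\de_y\tilde u|^2-2i\,\de_x\tilde u\cdot\de_y\tilde u\bigr), \]
with $\de_z=\tfrac12(\de_x-i\de_y)$. A short computation gives $\de_{\bar z}\phi=\tfrac12\ang{\lapl\tilde u;\de_z\tilde u}=0$ by harmonicity, so $\phi$ is holomorphic on $\mathbb{D}$. Conformality of $\tilde u$ is precisely the statement $\phi\equiv 0$, so the task reduces to showing this.

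Before analyzing boundary values I would invoke the $C^{k-1,\delta}$-regularity theorem for $\mz$-harmonic maps proved later in the paper: with $k\ge 2$ it yields $u\in C^{1,\delta}(\mathcal S^1,\N)$, and then classical elliptic boundary regularity upgrades the harmonic extension to $\tilde u\in C^{1,\delta}(\overline{\mathbb{D}},\R^m)$, so that all the traces below are continuous. On $\mathcal S^1$ the tangential derivative $\de_\tau\tilde u=\frac{du}{d\theta}$ lies in $T_u\N$, because $u$ is $\N$-valued, while the characterization \eqref{eq:hhfchar} reads $P^T(u)\de_\nu\tilde u=0$, i.e.~$\de_\nu\tilde u\in(T_u\N)^\perp$. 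Consequently $\de_\tau\tilde u\cdot\de_\nu\tilde u=0$ pointwise on $\mathcal S^1$.

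Passing to polar coordinates $z=re^{i\theta}$, the identity $\de_z=\tfrac{e^{-i\theta}}{2}(\de_r-\tfrac{i}{r}\de_\theta)$ gives, at $r=1$,
\[ e^{2i\theta}\phi(e^{i\theta})=\tfrac14\bigl(|\de_r\tilde u|^2-|\de_\theta\tilde u|^2-2i\,\de_r\tilde u\cdot\de_\theta\tilde u\bigr), \]
whose imaginary part vanishes by the previous paragraph (since $\de_r=\de_\nu$ and $\de_\theta=\de_\tau$ on $\mathcal S^1$). Therefore the holomorphic function $\psi(z):=z^2\phi(z)$ has real boundary values on $\mathcal S^1$. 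Since $\mathrm{Im}(\psi)$ is harmonic on $\mathbb{D}$, continuous up to $\de\mathbb{D}$ and zero there, the maximum principle forces $\mathrm{Im}(\psi)\equiv 0$, so $\psi$ is a real constant; as $\psi(0)=0$, we conclude $\psi\equiv 0$, hence $\phi\equiv 0$. The geometric statement then follows directly from Definition~\ref{freedef}: $\tilde u$ is harmonic, conformal away from the isolated zeros of $d\tilde u$, and orthogonal to $\N$ along $\de S$ by \eqref{eq:hhfchar}.

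The main obstacle is the regularity step. The $\mz$-harmonic relation is a priori only a distributional identity in $H^{-1/2}(\mathcal S^1,\R^m)$, so the product $\de_\tau\tilde u\cdot\de_\nu\tilde u$ has no meaning without first promoting both factors to continuous functions; this is where the regularity theorem enters essentially. Once it is invoked, the remaining argument is a classical Schwarz-reflection-type computation, and in particular one sees that no domain-variation argument is needed on the disk: the orthogonality of normal and tangential traces on $\mathcal S^1$ alone forces the Hopf differential, holomorphic on $\mathbb{D}$, to vanish.
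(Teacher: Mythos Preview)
Your proof is correct and follows essentially the same route as the paper: the argument given there (in the proof of Theorem~\ref{confannIntr} together with Remark~\ref{diskconf}) likewise invokes Theorem~\ref{regul} to obtain $u\in C^1$, uses $\mz$-harmonicity and the constraint $u\in\N$ to get $\de_r\tilde u\cdot\de_\theta\tilde u=0$ on the boundary, applies the maximum principle to $\Im(z^2H(z))$, and concludes that the real constant $z^2H(z)$ vanishes because $H$ is bounded near the origin. Your observation $\psi(0)=0$ is just a cleaner phrasing of this last step.
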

We point out that Proposition \ref{pr-I.1}   has been at the origin of the study of $1/2$-harmonic maps.

In  this paper we are going to  investigate the regularity of     $\mz$-harmonic maps $u\in H^{1/2}(\de S,\N)$. Besides showing the H\"older  continuity of such maps, we will illustrate  how to bootstrap to higher regularity. Precisely   we will show the following.
 \begin{thm}\label{regul}
Let $\N\subset\R^m$ be a  $C^k$-smooth closed embedded manifold, with $k\ge 2$, and let $u\in {H}^{1/2}(\de S,\N)$ be $\mz$-harmonic. Then
\[ u\in
\bigcap_{0<\delta<1} C^{k-1,\delta}(\de S,\N). \]
In particular, if $\N$ is $C^\infty$ then $u\in C^\infty(\de S,\N)$.
\end{thm}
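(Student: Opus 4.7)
The plan is to reduce, one boundary component at a time, to a mild perturbation of the standard $\mz$-harmonic map equation on $\mathcal{S}^1$, apply the Da Lio--Rivière regularity theory to obtain an initial Hölder bound, and then bootstrap up to the target regularity.

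Using the energy decomposition announced in the abstract, on any boundary component $\Gamma\subset\de S$ identified conformally with $\mathcal{S}^1$, the equation $P^T(u)\frac{\de\tu}{\de\nu}=0$ rewrites as
\begin{equation*}
P^T(u)\ml u = P^T(u)\,\rz(u)\quad\text{on }\mathcal{S}^1,
\end{equation*}
where $\rz(u)$ collects the contribution of the quadratic remainder (continuous on every $H^s$) and, when $\de S$ is disconnected, of the Poisson-type coupling operators relating $\Gamma$ to the other boundary circles. Since both are smoothing of infinite order, $\rz(u)\in C^\infty$ already when $u\in H^{1/2}$, and it plays the role of a harmless lower-order forcing term. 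The problem is thereby reduced to a genuinely critical nonlocal system on $\mathcal{S}^1$.

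For the initial step $u\in H^{1/2}\Rightarrow u\in C^{0,\alpha}$ for some $\alpha>0$, I would follow the three-term commutator strategy of Da Lio--Rivière. Combining $P^T(u)\ml u=P^T(u)\rz(u)$ with algebraic manipulations specific to the nonlocal setting (commutator identities between $\ml$ and the projections $P^T,P^N$), one obtains an elliptic system of the form
\begin{equation*}
\ml(\ql u) = \Omega\cdot\ql u + F
\end{equation*}
with $\Omega$ antisymmetric (reflecting the tangential constraint on $u$) and $F$ lying in a scaling-critical Hardy or Morrey space. Compensated-compactness estimates of Coifman--Rochberg--Weiss--Semmes type, combined with a Morrey-type energy decay argument, then yield $\ql u\in L^p_{\mathrm{loc}}(\mathcal{S}^1)$ for some $p>2$, hence $u\in C^{0,\alpha}$.

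Once $u$ is Hölder continuous, the bootstrap to $C^{k-1,\delta}$ proceeds by iterating Schauder-type estimates for $\ml$ on Hölder/Besov scales applied to $\ml u=-P^N(u)\ml u+\rz(u)$. The nonlinearity is controlled via $P^N\in C^{k-1}$, each iteration gains a fractional derivative, and the process saturates exactly at the regularity of $\mathcal{N}$. The main obstacle is precisely the first step: $H^{1/2}$ is the critical scale for $\ml$ on $\mathcal{S}^1$, so no elementary elliptic argument provides an $\varepsilon$-gain, and one must invoke the full Hardy-space commutator machinery. Once this threshold is cleared, the rest of the argument is essentially formal, limited only by the smoothness of the target.
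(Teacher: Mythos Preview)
Your outline for the reduction and the initial H\"older step matches the paper: the Dirichlet-to-Neumann decomposition (Lemma~\ref{decnd}) does give $P^T(u)\ml u=\text{smooth remainder}$ on each boundary circle, and H\"older continuity is then obtained via the antisymmetric-potential rewriting \eqref{schrod} together with localized three-term commutator estimates and a Morrey decay argument (Section~\ref{holdereg}). The paper transfers to $\R$ by stereographic projection rather than staying on $\mathcal{S}^1$, but this is cosmetic.

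The bootstrap, however, differs from your sketch, and your version has a gap. The identity you write, $\ml u=-P^N(u)\ml u+\text{remainder}$ (the sign should be $+$), is merely a rearrangement of the Euler--Lagrange equation: $\ml u$ appears on both sides, so a direct Schauder iteration for $\ml$ cannot start. To make this work one must show separately that $P^N(u)\ml u$ is effectively a \emph{quadratic} term (the nonlocal analogue of $A(u)(\nabla u,\nabla u)$), which requires exploiting the pointwise geometric bound $|P^N(u(x))(u(x)-u(y))|\lesssim|u(x)-u(y)|^2$ inside the singular integral representation; this is feasible but you have not indicated it, and ``Schauder plus $P^N\in C^{k-1}$'' alone does not supply it.

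The paper avoids this issue altogether by combining the $\mz$-harmonic equation with the tangency constraint $P^N(w)\nabla w=0$ (equation \eqref{horiz}) to write, for a cutoff $\eta$,
\[
\eta\nabla w=-\eta P^T(w)\,\rz\ml w=\rz\big(\eta P^T(w)\ml w\big)-\big[\rz,\eta P^T(w)\big]\ml w-\rz(\eta h),
\]
so that $\nabla w$ is a smooth term plus a two-term commutator $[\rz,a]b$. Lemmas~\ref{commstart} and~\ref{commhilb} show this commutator gains regularity (first $\hmzb\times\hmmz\to L^{2p/(p+2)}$, then $H^{s,p}\times L^q\to H^{s-\gamma,q}$ for any $\gamma>1/p$), and iteration in the $H^{s,p}$ scale yields $w\in\bigcap_{\gamma>1/p}H^{k-\gamma,p}_{loc}$ for all $p$, hence $C^{k-1,\delta}$ for all $\delta<1$. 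Thus the mechanism driving higher regularity here is a Hilbert-transform commutator acting on $\ml w$, not a Schauder estimate for $\ml$ applied to a right-hand side.
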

The proof of Theorem  \ref{regul}  is rather technical and we defer it  to  the appendix.

We point out that one of the key steps to prove the regularity  of  $\mz$-harmonic maps is 
the representation of the energy $E(f;g)$ as a sum of the fractional $\mz$-energies at each boundary component (according to a suitable identification with ${\mathcal{S}}^1$), plus a quadratic term which is continuous in the $H^s$ topology, for any $s\in\R$. The identification of the energy of $\tilde f$ with a fractional energy on the boundary   in the case of flat disk $\mathbb{D}$ is a well-known fact.

In the model case where $S=A_t:=\bar B_t\setminus B_1$, $t>1$, we have  the following decomposition.
\begin{lemmaen}\label{anndecInt}
Let $a,b\in {{H}}^{1/2}({\mathcal{S}}^1,\env)$ and define $f\in H^{1/2}(\de A_t,\env)$ by setting  $f(e^{i\theta})=a(e^{i\theta})$, $f(te^{i\theta})=b(e^{i\theta})$. Then the Dirichlet energy of the harmonic extension $\tilde{f}\in H^1(A_t,\env)$ is given by
\begin{equation}\label{eq:annenergyint} \begin{split} \frac{1}{2\pi}\int_{A_t}\babs{\nabla\tilde f}^2 
=&\sum_n\abs{n}(\abs{a_n}^2+\abs{b_n}^2)+\frac{\abs{b_0-a_0}^2}{\log t} \\
&+\sum_{n>0}n\pa{\frac{4}{t^{2n}-1}(\abs{a_n}^2+\abs{b_n}^2)-\frac{8t^n}{t^{2n}-1}\Re(a_n\cdot\obar{b_n})} \\
 =&  \frac{1}{2\pi}[\|{(-\Delta)^{1/4}a}\|_{L^2({\mathcal{S}}^1)}^2+\|{(-\Delta)^{1/4}b}\|_{L^2({\mathcal{S}}^1)}^2+{\mathcal{B}_t}((a,b),(a,b))], \end{split}\end{equation}
 where ${\mathcal{B}}_t:\mathcal{D}'({\mathcal{S}}^1,\env)^2\times\mathcal{D}'({\mathcal{S}}^1,\env)^2\to\R$ is a compact symmetric bilinear functional.
\end{lemmaen}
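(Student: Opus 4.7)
The plan is to diagonalize the problem via Fourier series on the two boundary circles and separation of variables. Writing $a = \sum_n a_n e^{in\theta}$ and $b = \sum_n b_n e^{in\theta}$, every harmonic function on $A_t$ has the separated form
\[
\tilde f(re^{i\theta}) = A_0 + B_0 \log r + \sum_{n \neq 0}(A_n r^n + B_n r^{-n})\,e^{in\theta},
\]
with $A_n, B_n \in \C^m$ determined uniquely by the Dirichlet conditions $\tilde f|_{r=1} = a$, $\tilde f|_{r=t} = b$. Solving the resulting $2\times 2$ linear system in each mode yields $A_0 = a_0$, $B_0 = (b_0 - a_0)/\log t$, and, for $n>0$,
\[
A_n = \frac{b_n t^n - a_n}{t^{2n}-1}, \qquad B_n = \frac{a_n t^{2n} - b_n t^n}{t^{2n}-1};
\]
the formulas for $n<0$ follow from the reality relations $a_{-n} = \bar a_n$, $b_{-n} = \bar b_n$.

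Next I would compute $\int_{A_t}|\nabla \tilde f|^2$ in polar coordinates via $|\nabla\tilde f|^2 = |\partial_r\tilde f|^2 + r^{-2}|\partial_\theta \tilde f|^2$ and apply Parseval in $\theta$ to isolate each frequency. The key algebraic point is that the mixed terms $\Re(A_n\cdot \bar B_n)$ appear with weights $-2r^{-2}$ in the $n$-th Parseval summand of $|\partial_r\tilde f|^2$ and $+2r^{-2}$ in that of $r^{-2}|\partial_\theta\tilde f|^2$, hence cancel, leaving the elementary integrand $2n^2(|A_n|^2 r^{2n-1} + |B_n|^2 r^{-2n-1})$ for each $n \neq 0$. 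Integrating from $1$ to $t$, substituting the expressions for $A_n, B_n$, and exploiting the $n \leftrightarrow -n$ symmetry, one finds, for each $n > 0$, a coefficient $\frac{2n(t^{2n}+1)}{t^{2n}-1}$ on $|a_n|^2 + |b_n|^2$ and $-\frac{8n t^n}{t^{2n}-1}$ on $\Re(a_n \cdot \bar b_n)$; the zero mode yields $|b_0 - a_0|^2/\log t$ from $|B_0|^2 \int_1^t r^{-1}\,dr$. Splitting the diagonal weight as $\frac{2n(t^{2n}+1)}{t^{2n}-1} = 2n + \frac{4n}{t^{2n}-1}$ isolates the main term $\sum_n |n|(|a_n|^2 + |b_n|^2) = \frac{1}{2\pi}\bigl(\|\ql a\|_{L^2}^2 + \|\ql b\|_{L^2}^2\bigr)$ and identifies $\mathcal{B}_t$ with the stated residual.

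For the final assertion about $\mathcal{B}_t$: the remaining Fourier multipliers $\frac{4n}{t^{2n}-1}$ (on the diagonal) and $\frac{8n t^n}{t^{2n}-1}$ (off-diagonal) decay exponentially in $n$ since $t > 1$, while the zero mode contributes the bounded constant $1/\log t$. Consequently the operator associated to $\mathcal{B}_t$ is smoothing, mapping $H^s(\mathcal{S}^1,\env)$ into $H^{s'}(\mathcal{S}^1,\env)$ for every $s, s' \in \R$, so $\mathcal{B}_t$ extends continuously to $\mathcal{D}'(\mathcal{S}^1,\env)^2 \times \mathcal{D}'(\mathcal{S}^1,\env)^2$ and is compact on any $H^s \times H^s$ by the Rellich embedding; symmetry is manifest in the explicit formula. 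The only real obstacle here is the bookkeeping in the radial integration; no conceptual difficulty arises.
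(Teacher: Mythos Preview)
Your proposal is correct and follows essentially the same route as the paper: explicit Fourier expansion of the harmonic extension, radial integration mode by mode (with the same cross-term cancellation you note), the algebraic splitting $\tfrac{2n(t^{2n}+1)}{t^{2n}-1}=2n+\tfrac{4n}{t^{2n}-1}$, and then the observation that the residual coefficients decay exponentially in $n$, which gives the continuity/compactness of $\mathcal{B}_t$ on any $H^s$. The only organizational difference is that the paper isolates the analysis of $\mathcal{B}_t$ into a separate lemma (proving sequential continuity on $\mathcal{D}'(\mathcal S^1)^2$ via the estimate $\tfrac{nt^n}{t^{2n}-1}\lesssim n^{2s}$, Cauchy--Schwarz, and the compact embedding $H^{s+1}\hookrightarrow H^s$), but the content is the same as what you sketch.
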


By using the decomposition  \eqref{eq:annenergyint} we succeed in rewriting condition \eqref{eq:hhfchar}   
in the form of a {nonlocal linear Schr\"odinger system with an antisymmetric potential}, as it has been done in \cite{DLR11,MS17,DLS17} in the case of the flat disk.

We will also show that the \emph{conformality}\footnote{\emph{Conformality} will mean \emph{weak conformality}, i.e. at every point $d\tilde u$ either is a linear conformal map or vanishes.} of the harmonic extension $\tilde u$ is equivalent to criticality of $E$ with respect to variations of the \emph{conformal class}   of $S$.
For instance, if $S$ is diffeomorphic to an annulus, then up to a conformal diffeomorphism we can assume that $(S,g)=(A_t,g_{\R^2})$ for some $t>0$ (see Theorem \ref{unifann}). In this case a variation of the conformal class corresponds to a variation of the parameter $t$.

\begin{thm}\label{confannIntr}
	Let $a,b\in {{H}}^{1/2}({\mathcal{S}}^1,\N)$ and define $u[t]\in{ {H}}^{1/2}(\de A_t,\N)$ by $u[t](e^{i\theta}):=a(e^{i\theta})$, $u[t](te^{i\theta}):=b(e^{i\theta})$.
	Assume that $u[\obar{t}]$ is $\mz$-harmonic for the annulus $(A_{\obar{t}},g_{\R^2})$. Then its harmonic extension is conformal if and only if \[ \restr{\frac{d}{dt}E_t(u[t])}{t=\obar{t}}=0, \]
	where $E_t$ is the $\mz$-energy for $A_t$.
\end{thm}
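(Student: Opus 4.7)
The plan is to encode both sides of the stated equivalence as conditions on the \emph{Hopf differential}
\[ \Phi(z) := \partial_z \tilde u \cdot \partial_z \tilde u, \]
where the dot denotes the complex-bilinear extension of the Euclidean inner product on $\R^m$. Since $\tilde u$ is harmonic on $A_{\bar t}$, $\partial_z \tilde u$ and hence $\Phi$ is holomorphic, and by definition $\tilde u$ is weakly conformal if and only if $\Phi\equiv 0$ on $A_{\bar t}$.

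First I would write the harmonic extension explicitly in Fourier series,
\[ \tilde u(r,\theta) = a_0 + (b_0 - a_0)\frac{\log r}{\log t} + \sum_{n\ne 0}(A_n r^{|n|} + B_n r^{-|n|})e^{in\theta}, \]
with $A_n,B_n$ determined by $A_n+B_n=a_n$ and $A_nt^{|n|}+B_nt^{-|n|}=b_n$. Using $\partial_z=\tfrac{e^{-i\theta}}{2}(\partial_r-\tfrac{i}{r}\partial_\theta)$ a direct computation gives
\[ \partial_z\tilde u(z)=\frac{b_0-a_0}{2\log t}\, z^{-1}+\sum_{n\ge 1}nA_n z^{n-1}-\sum_{n\ge 1}n\overline{B_n}\, z^{-n-1}, \]
so that the $z^{-2}$-coefficient of $\Phi$ equals
\[ \Phi_{-2}=\frac{|b_0-a_0|^2}{4(\log t)^2}-2\sum_{n\ge 1}n^2\,A_n\cdot\overline{B_n}. \]
Since $a,b$ do not depend on $t$, I would then differentiate the formula from Lemma~\ref{anndecInt} term by term in $t$; inserting the explicit $A_n,B_n$ and one line of algebra, the result collapses into the clean identity $\frac{d}{dt}E_t(u[t]) = -\tfrac{8\pi}{t}\Re\Phi_{-2}$.

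Next I would use the $\mz$-harmonicity of $u[\bar t]$. The condition $P^T(u)\partial_\nu\tilde u=0$ forces $\partial_\nu\tilde u\in(T_u\N)^\perp$ on each boundary circle, and since $\partial_\tau\tilde u\in T_u\N$ this yields $\partial_\nu\tilde u\cdot\partial_\tau\tilde u=0$ on $\partial A_{\bar t}$. Squaring $\partial_z\tilde u=\tfrac{e^{-i\theta}}{2}(\partial_r-\tfrac{i}{r}\partial_\theta)\tilde u$ and reading off imaginary parts, one checks that the holomorphic function $z^2\Phi(z)$ takes real values on both circles $|z|=1$ and $|z|=\bar t$.

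Finally I would exploit this double reality. Writing the Laurent expansion $z^2\Phi(z)=\sum_m \Phi_{m-2}z^m$, reality on $|z|=1$ gives $\overline{\Phi_{m-2}}=\Phi_{-m-2}$ for every $m\in\Z$, whereas reality on $|z|=\bar t$ gives $\bar t^{2m}\overline{\Phi_{m-2}}=\Phi_{-m-2}$; subtracting, $(\bar t^{2m}-1)\overline{\Phi_{m-2}}=0$ for all $m$, and since $\bar t>1$ we conclude $\Phi_{m-2}=0$ for every $m\ne 0$ and $\Phi_{-2}\in\R$. In other words $\Phi(z)=\Phi_{-2}/z^2$ with $\Phi_{-2}$ real, so $\frac{d}{dt}E_t(u[t])|_{t=\bar t}=-\tfrac{8\pi}{\bar t}\Phi_{-2}$ vanishes precisely when $\Phi\equiv 0$, i.e.\ precisely when $\tilde u$ is weakly conformal. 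The main obstacle I expect is the careful bookkeeping in the $t$-derivative of Lemma~\ref{anndecInt} that realises it as a multiple of $\Re\Phi_{-2}$; the remaining extraction of Laurent coefficients is then essentially automatic.
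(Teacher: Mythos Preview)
Your proposal is correct and shares the same core strategy as the paper: both arguments hinge on the Hopf differential $\Phi=\partial_z\tilde u\cdot\partial_z\tilde u$, use $\mz$-harmonicity to deduce that $z^2\Phi$ is real on $\partial A_{\bar t}$, conclude that $z^2\Phi$ is a real constant, and identify that constant with (a nonzero multiple of) $\frac{d}{dt}E_t(u[t])\big|_{t=\bar t}$.

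The implementation differs in two places. First, to pass from ``$z^2\Phi$ is real on both boundary circles'' to ``$z^2\Phi$ is a real constant'', the paper observes that $\Im(z^2\Phi)$ is harmonic and vanishes on $\partial A_{\bar t}$, hence vanishes identically by the maximum principle; you instead match Laurent coefficients on the two circles. Both are valid and of comparable length. Second, to link the constant with the $t$-derivative of the energy, the paper proves a separate variational formula (Lemma~\ref{varannulus}),
\[
\frac{d}{dt}E_t(u[t])=\int_{\partial B_t}\Big(\frac{1}{t^2}\Big|\frac{\partial\tilde u}{\partial\theta}\Big|^2-\Big|\frac{\partial\tilde u}{\partial r}\Big|^2\Big)
= -\frac{4}{t^2}\int_{\partial B_t}\Re(z^2\Phi),
\]
obtained by differentiating a moving-domain integral; you instead differentiate the explicit Fourier expression of Lemma~\ref{anndecInt} and match it against the Fourier formula for $\Phi_{-2}$. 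Your route is more computational but perfectly sound (I checked your identity $\frac{d}{dt}E_t=-\tfrac{8\pi}{t}\Re\Phi_{-2}$), whereas the paper's Lemma~\ref{varannulus} is shorter and has the advantage of being coordinate-free and reusable.

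One point you should make explicit: to assert $\partial_\nu\tilde u\cdot\partial_\tau\tilde u=0$ \emph{pointwise} on $\partial A_{\bar t}$ (equivalently, to give meaning to the boundary values of $z^2\Phi$), you need $\tilde u\in C^1$ up to the boundary. The paper invokes Theorem~\ref{regul} for this; you should do the same.
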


We will extend Theorem \ref{confannIntr} to the hyperbolic case  where $S$ is neither a disk nor an annulus: see Theorem \ref{hypconf}.  \par
In the interesting special case   where $\N $  is the boundary of a {\em convex} $C^\infty$-smooth domain $\Omega$, we also prove the following result.

\begin{corollary}
\label{cvx}
The harmonic extension $\tilde u$ defines a conformal (branched) free boundary minimal immersion $\tilde u\colon (\interna{S},\partial S)\to (\Omega,\de\Omega)$, with branch points \emph{only in} $\interna{S}$, if and only if
$u$ is a nontrivial critical point of $E(f;g)$ with respect to the pair $(f,g)$ (criticality with respect to $g$ is understood in the sense specified in Section \ref{confsec}).
\end{corollary}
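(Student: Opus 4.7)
The plan is to decouple the two directions of the equivalence and reduce each to the $f$- and $g$-criticality results established in the paper. By the characterization \eqref{eq:hhfchar}, criticality of $E(\cdot;g)$ at $u$ with respect to $f$ is equivalent to the orthogonality relation $P^T(u)\frac{\de\tu}{\de\nu}=0$ on $\de S$. By the extension of Theorem \ref{confannIntr} to general surfaces (Theorem \ref{hypconf}), criticality with respect to the conformal class of $g$ is equivalent to the harmonic extension $\tu$ being weakly conformal. Hence $u$ is a nontrivial critical point of $E$ with respect to the pair $(f,g)$ precisely when $\tu$ is a nonconstant, conformal harmonic extension satisfying the orthogonality condition at $\de S$. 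Moreover, since $\de\Omega$ is smooth, Theorem \ref{regul} ensures that $u$ and hence $\tu$ are smooth up to the boundary, so that classical PDE tools apply throughout.

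The implication from a free boundary branched minimal immersion to criticality is then immediate: by Definition \ref{freedef} such an immersion is harmonic, conformal and meets $\de\Omega$ orthogonally, which is exactly the conjunction of the two criticality conditions; being a nontrivial immersion forces $\tu$ to be nonconstant. No convexity of $\Omega$ is used in this direction.

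For the converse, assume $\tu$ is nonconstant, harmonic, conformal, with $u=\tu|_{\de S}$ valued in $\de\Omega$ and $P^T(u)\frac{\de\tu}{\de\nu}=0$. I need to show that \emph{(i)} $\tu(\interna{S})\subset\Omega$ and \emph{(ii)} $d\tu$ does not vanish on $\de S$. For \emph{(i)}, write $\bar\Omega=\bigcap_\alpha\set{x\in\R^m:\ell_\alpha(x)\le c_\alpha}$ as the intersection of its supporting half-spaces; each $\ell_\alpha\circ\tu$ is a real harmonic function on $S$, bounded by $c_\alpha$ on $\de S$, so by the weak maximum principle $\tu(S)\subset\bar\Omega$. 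If $\tu(p_0)\in\de\Omega$ for some $p_0\in\interna{S}$, the strong maximum principle applied to $\ell_\alpha\circ\tu$, with $\set{\ell_\alpha=c_\alpha}$ the unique supporting hyperplane at $\tu(p_0)$, forces $\ell_\alpha\circ\tu\equiv c_\alpha$, so that $\tu(S)$ lies in that hyperplane; in the strictly convex case one has $\de\Omega\cap\set{\ell_\alpha=c_\alpha}=\set{\tu(p_0)}$, so $u\equiv\tu(p_0)$ and hence $\tu$ is constant, contradicting nontriviality. For \emph{(ii)}, a boundary branch point $p_0\in\de S$ would give $\frac{\de\tu}{\de\nu}(p_0)=0$, whereas the Hopf boundary point lemma applied to the nonconstant harmonic function $\ell\circ\tu$, with $\ell$ the supporting linear functional at $\tu(p_0)\in\de\Omega$, yields $\ell\bigl(\frac{\de\tu}{\de\nu}(p_0)\bigr)>0$, a contradiction.

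The main obstacle will be completing \emph{(i)} when $\Omega$ is convex but not strictly convex: then $\de\Omega$ may contain flat faces, and the strong maximum principle above only forces $\tu(S)$ into a hyperplane without directly contradicting nonconstancy. I would handle this by iteration, noting that $u$ must then take values in the lower-dimensional flat face $\de\Omega\cap\set{\ell_\alpha=c_\alpha}$ and that the conformal harmonic map $\tu$ into this affine slice still satisfies the orthogonality condition on the relative boundary of the face, thereby inducting on the dimension of the affine hull of $u(\de S)$ to reduce to the strictly convex model; alternatively, one may smoothly approximate $\Omega$ from inside by strictly convex $C^\infty$ domains and pass to the limit, being careful to preserve the orthogonality condition.
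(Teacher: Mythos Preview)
Your reductions and the direction ``free boundary minimal immersion $\Rightarrow$ criticality'' are fine, and your use of Hopf's lemma for (ii) is correct once (i) is in hand. The gap is in your proof of (i) for merely convex (not strictly convex) $\Omega$: the iteration on faces does not close, because after concluding $\tilde u(S)\subseteq H$ you only know that $u$ maps into the face $\bar\Omega\cap H=\de\Omega\cap H$, not into its \emph{relative boundary}, so there is no well-posed lower-dimensional free boundary problem to induct on; and your approximation by strictly convex domains changes $\de\Omega$, so $u$ no longer takes values in the approximating boundary and the orthogonality condition is lost.

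The paper avoids this difficulty by a direct argument valid for arbitrary convex $\Omega$: one first exhibits a single interior point $x''\in\interna{S}$ with $\tilde u(x'')\in\Omega$. Since $u$ is nontrivial there is $x'\in\de S$ with $d\tilde u(x')\neq 0$; by conformality and the condition $P^T(u)\frac{\de\tilde u}{\de\nu}=0$, the vector $\frac{\de\tilde u}{\de\nu}(x')$ is a nonzero normal to $\de\Omega$ at $u(x')$, so moving from $x'$ into $\interna{S}$ the image $\tilde u$ leaves $\de\Omega$ to first order; combined with $\tilde u(S)\subseteq\bar\Omega$ (which you already have), it must enter $\Omega$. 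Once such an $x''$ exists, for any $p\notin\Omega$ choose an affine $F$ with $F(p)\le 0$ and $F(\Omega)\subseteq(0,\infty)$: then $F\circ u\ge 0$ on $\de S$ and $F\circ\tilde u(x'')>0$, so the strong maximum principle yields $F\circ\tilde u>0$ on $\interna{S}$, whence $p\notin\tilde u(\interna{S})$. This gives (i) without strict convexity, and then your Hopf argument for (ii) goes through verbatim.
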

 
 In view of the results in this paper, it would be interesting to study the flow version of the energy $E(f;g)$, where the evolution of the conformal class of $g$ would be given by
 the lack of conformality of $\tilde u$, in a similar way as for the Teichm\"uller harmonic map flow studied in \cite{RT16}.
 This would correspond to the Teichm\"uller $\mz$-harmonic  flow.
 
 This paper is organized as follows.
 \begin{itemize}
 	\item Section \ref{decEn} provides the decomposition of the $\mz$-energy \eqref{energy} in terms of nonlocal operators defined on $\partial S$; we also obtain a similar decomposition for the related Dirichlet-to-Neumann operator.
 	\item Section \ref{confsec} establishes the criterion for the conformality of the harmonic extension $\tilde f$, as well as Corollary \ref{cvx}.
 	\item In Section \ref{unifannsec} we show a well-known uniformization theorem for compact annuli, exhibiting a conformal equivalence which is smooth up to the boundary; this is needed for the construction made in Section \ref{decEn}.
 	\item Section \ref{functapp} collects the definitions and basic facts concerning all the functional spaces involved in the paper; in particular we show some useful results about the space $\hmz(\R)$.
 	\item In Section \ref{tcsec} we recall some fundamental three-term commutator estimates, which were first obtained in \cite{DLR09}, as well as a two-term commutator estimate  due to Coifman--Rochberg--Weiss from \cite{CRW76}.
 	\item Section \ref{holdereg} details the proof of the H\"older continuity of a $\mz$-harmonic map $u$ and uses localized versions of the integrability by compensation effects recalled in Section \ref{tcsec}.
 	\item In Section \ref{higherreg} we bootstrap the results of Section \ref{holdereg} to obtain higher regularity of $u$, i.e. Theorem \ref{regul}, exploiting another two-term commutator.
 \end{itemize}

\begin{ack}The authors would like to thank Tristan Rivi\`ere for
suggesting the investigation of  the problem and for the helpful discussions.\end{ack}
  
\section{Decomposition of the energy}\label{decEn}

The purpose of this section is to obtain the decomposition of the $\mz$-energy \eqref{energy} in terms of nonlocal operators defined on $\partial S$.\par
We will also show  that the so-called Dirichlet-to-Neumann  operator
\[ H^{1/2}(\de S,\R^m)\to H^{-1/2}(\de S,\R^m),\quad f\mapsto\frac{\de\tilde f}{\de\nu} \]
can be represented  as the sum of the usual fractional Laplacian at each boundary component and  a remainder $\mathcal{B}\colon \mathcal{D}'(\de S,\R^m)\to C^\infty(\de S,\R^m)$,   which represents a sort of interaction between the boundary data.

We will start from the model case of the flat annulus, where this decomposition is explicit.

\subsection{The case of an annular domain}

For a fixed $t>1$, let $A_t:=\bar{B}_t\setminus B_1\subset\C$ be the standard annulus with the euclidean metric.

Given  $f\in H^{1/2}(\de A_t,\env)$, we denote  
\[ a(e^{i\theta}):=f(e^{i\theta}),\ b(e^{i\theta}):=f(te^{i\theta})\in H^{1/2}({\mathcal{S}}^1,\env). \]
We use the notation $(a_n)_{n\in\Z}$ and $(b_n)_{n\in\Z}$ for the Fourier coefficients of the two functions, namely
\[ a_n:=\frac{1}{2\pi}\int_0^{2\pi}a(e^{i\theta})e^{-in\theta}\,d\theta, \quad b_n:=\frac{1}{2\pi}\int_0^{2\pi}b(e^{i\theta})e^{-in\theta}\,d\theta. \]

We observe that $\sum_{n\in\Z} 2\pi\abs{n}\abs{a_n}^2=\|{(-\Delta)^{1/4}a}\|_{L^2}^2$ and similarly for $b$.
\par
Given $(a,b),(c,d)\in \mathcal{D}'({\mathcal{S}}^1,\env)^2\times\mathcal{D}'({\mathcal{S}}^1,\env)^2$ we define the following bilinear operator:
\begin{align} \label{btdef}{\mathcal{B}_t}((a,b),(c,d))&:=2\pi\frac{(b_0-a_0)\cdot(d_0-c_0)}{\log t} \\
	&\quad+\sum_{n>0}\frac{8\pi n}{t^{2n}-1}\Re(a_n\cdot\obar{c_n}+b_n\cdot\obar{d_n}-t^n a_n\cdot\obar{d_n}-t^n b_n\cdot\obar{c_n}). \end{align}

\begin{lemmaen}\label{bt}	
${\mathcal{B}_t}$ is a sequentially continuous bilinear functional on $\mathcal{D}'({\mathcal{S}}^1,\env)^2\times \mathcal{D}'({\mathcal{S}}^1,\env)^2$.
\end{lemmaen}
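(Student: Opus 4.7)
The proof hinges on one quantitative observation: since $t>1$, the Fourier multipliers $\frac{n}{t^{2n}-1}$ and $\frac{n t^n}{t^{2n}-1}$ appearing in \eqref{btdef} decay exponentially in $n$, whereas the Fourier coefficients of any distribution on $\mathcal{S}^1$ grow only polynomially. The entire argument will consist in matching these two rates.

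First I would record the polynomial growth bound on Fourier coefficients of distributions. For any $u\in\mathcal{D}'(\mathcal{S}^1,\env)$, pairing $u$ against the test function $e^{-in\theta}$ and using that its $C^N$-seminorms are $O((1+\abs{n})^N)$ yields constants $C,N\ge 0$ with $\abs{u_n}\le C(1+\abs{n})^N$ for every $n\in\Z$. If moreover $u^k\to u$ in $\mathcal{D}'(\mathcal{S}^1,\env)$, the Banach--Steinhaus theorem applied in the Fr\'echet space $C^\infty(\mathcal{S}^1,\env)$ gives equicontinuity of $(u^k)$, so the bound can be taken with constants $C,N$ independent of $k$.

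Next, absolute convergence and well-posedness of $\mathcal{B}_t$ follow at once: if the Fourier coefficients of $a,b,c,d$ are all bounded by $C(1+\abs{n})^N$, the worst summand in \eqref{btdef} is dominated by
\[ \frac{n\, t^n}{t^{2n}-1}\,(1+n)^{2N}\,\lesssim\,(1+n)^{2N+1}\,t^{-n}, \]
which is summable since $t>1$; the $n=0$ contribution is the finite bilinear expression $2\pi(b_0-a_0)\cdot(d_0-c_0)/\log t$, so it poses no convergence issue. Bilinearity is transparent from the formula. For sequential continuity, given $(a^k,b^k)\to(a,b)$ and $(c^k,d^k)\to(c,d)$ in $\mathcal{D}'(\mathcal{S}^1,\env)^2$, each individual Fourier coefficient converges pointwise while the uniform version of the polynomial bound above furnishes a summable majorant independent of $k$; dominated convergence then lets us pass to the limit term by term in \eqref{btdef} and conclude $\mathcal{B}_t((a^k,b^k),(c^k,d^k))\to\mathcal{B}_t((a,b),(c,d))$.

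The only nontrivial ingredient is the uniform polynomial control of Fourier coefficients along convergent sequences of distributions, an immediate consequence of Banach--Steinhaus; once that is in hand, the rest is a routine estimate exploiting the exponential decay of the two multipliers.
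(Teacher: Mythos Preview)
Your proof is correct. Both your argument and the paper's rest on the same core observation---Banach--Steinhaus supplies uniform control along a convergent sequence in $\mathcal{D}'(\mathcal{S}^1)$, and the exponential decay of $\frac{n}{t^{2n}-1}$, $\frac{nt^n}{t^{2n}-1}$ then absorbs whatever polynomial growth the Fourier coefficients carry---so the two proofs are very close in spirit. The technical routes diverge in the final step: the paper first establishes an explicit $H^s\times H^s$ bound on $\mathcal{B}_t$, then uses Banach--Steinhaus to place the convergent sequence in a bounded set of some $H^{s+1}$, and finishes via the compact embedding $H^{s+1}\hookrightarrow H^s$ together with a subsequence argument; you instead keep everything at the level of individual Fourier coefficients and invoke dominated convergence directly. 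Your route is slightly more elementary (no need for compact embeddings or subsequence extraction), while the paper's version has the incidental benefit of producing a quantitative $H^s$ continuity estimate for $\mathcal{B}_t$.
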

\begin{proof}
	Assume $a,b,c,d\in H^s(\mathcal S^1,\env)$. Since $t>1$ we have
	\begin{align} \label{contB}\frac{\abs{{\mathcal{B}_t}((a,b),(c,d))}}{2\pi}\le&\frac{\abs{b_0-a_0}\abs{d_0-c_0}}{\log t}+\sum_{n>0}\frac{4n}{t^{2n}-1}(\abs{a_n}\abs{c_n}+\abs{b_n}\abs{d_n}) \nonumber\\
	&+\sum_{n>0}\frac{4nt^n}{t^{2n}-1}(\abs{a_n}\abs{d_n}+\abs{b_n}\abs{c_n}) \\
	\lesssim&\sum_{n\ge 0}(1+n^2)^s(\abs{a_n}\abs{c_n}+\abs{b_n}\abs{d_n}+\abs{a_n}\abs{d_n}+\abs{b_n}\abs{c_n}) \nonumber \\
	\le&\norm{a}_{H^s}\norm{c}_{H^s}+\norm{b}_{H^s}\norm{d}_{H^s}+\norm{a}_{H^s}\norm{d}_{H^s}+\norm{b}_{H^s}\norm{c}_{H^s},  \nonumber\end{align}
	thanks to the elementary estimate $\frac{nt^n}{t^{2n}-1}\ls n^{2s}$ and the Cauchy--Schwarz inequality (the implied constants depend of course on $s,t$).
	Since $\mathcal{D}'(\mathcal S^1)=\bigcup_{s\in\R}H^s(\mathcal S^1)$, we get in particular that $\mathcal B_t$ is a linear functional on $\mathcal{D}'({\mathcal{S}}^1,\env)^2\times \mathcal{D}'({\mathcal{S}}^1,\env)^2$.
	
	If $(((a_i,b_i),(c_i,d_i)))_{i\in\mathbb N}$ is a sequence converging to $((a,b),(c,d))$ in this space, by the uniform boundedness principle (applied to the Fr\'echet space $\mathcal{D}({\mathcal{S}}^1,\R^m)$) we deduce that the set $\set{a_i,b_i,c_i,d_i\mid i\in\mathbb N}$ is bounded in $H^{s+1}(\mathcal S^1)$, for some real $s$.
	
	By the compact embedding $H^{s+1}(\mathcal{S}^1)\inject H^s(\mathcal S^1)$, any subsequence admits a further subsequence converging in the $H^s({\mathcal{S}}^1,\env)^2\times H^s({\mathcal{S}}^1,\env)^2$, where we have already shown the continuity of ${\mathcal{B}_t}$. This shows that ${\mathcal{B}_t}((a_i,b_i),(c_i,d_i))\to {\mathcal{B}_t}((a,b),(c,d))$.
\end{proof}

\begin{lemmaen}\label{anndec}
	For any $f\in H^{1/2}(\de A_t,\env)$, the Dirichlet energy of its harmonic extension $\tilde{f}\in H^1(A_t,\env)$ is given by
\begin{equation}\label{eq:annenergy} \begin{split} \int_{A_t}\babs{\nabla\tilde f}^2 
=&2\pi\sum_n\abs{n}(\abs{a_n}^2+\abs{b_n}^2)+2\pi\frac{\abs{b_0-a_0}^2}{\log t} \\
&+2\pi\sum_{n>0}n\pa{\frac{4}{t^{2n}-1}(\abs{a_n}^2+\abs{b_n}^2)-\frac{8t^n}{t^{2n}-1}\Re(a_n\cdot\obar{b_n})} \\
 =&  \bnorm{(-\Delta)^{1/4}a}_{L^2({\mathcal{S}}^1)}^2+\bnorm{(-\Delta)^{1/4}b}_{L^2({\mathcal{S}}^1)}^2+{\mathcal{B}_t}((a,b),(a,b)). \end{split}\end{equation}
	\end{lemmaen}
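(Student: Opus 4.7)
The plan is to compute both sides explicitly via separation of variables and Parseval's identity, and then identify the difference with the bilinear form $\mathcal{B}_t$ introduced in \eqref{btdef}. By density and continuity of both sides in the $H^{1/2}$ norm, it suffices to prove the identity for smooth boundary data $a,b$.

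First, I would write the harmonic extension in polar coordinates $(r,\theta)$ as a Fourier series
\[ \tilde f(r,\theta)=\sum_{n\in\Z}\tilde f_n(r)e^{in\theta}, \]
where each radial coefficient solves the Euler equation $r^2\tilde f_n''+r\tilde f_n'-n^2\tilde f_n=0$, hence
\[ \tilde f_n(r)=\alpha_n r^{|n|}+\beta_n r^{-|n|}\quad(n\neq 0),\qquad\tilde f_0(r)=\alpha_0+\beta_0\log r. \]
Imposing $\tilde f_n(1)=a_n$ and $\tilde f_n(t)=b_n$ gives, for $n\neq 0$,
\[ \alpha_n=\frac{b_n-t^{-|n|}a_n}{t^{|n|}-t^{-|n|}},\qquad\beta_n=\frac{t^{|n|}a_n-b_n}{t^{|n|}-t^{-|n|}}, \]
together with $\alpha_0=a_0$ and $\beta_0=(b_0-a_0)/\log t$.

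Next, I would exploit orthogonality of $\set{e^{in\theta}}$ in $L^2(0,2\pi)$ to get
\[ \int_{A_t}\babs{\nabla\tilde f}^2=2\pi\sum_{n\in\Z}\int_1^t\pa{\abs{\tilde f_n'(r)}^2+\frac{n^2}{r^2}\abs{\tilde f_n(r)}^2}r\,dr. \]
For $n=0$ this gives immediately the contribution $2\pi\beta_0^2\log t=2\pi\abs{b_0-a_0}^2/\log t$. For $n\neq 0$, a direct computation using the identity $|A-B|^2+|A+B|^2=2|A|^2+2|B|^2$ with $A=\alpha_n r^{|n|}$, $B=\beta_n r^{-|n|}$ collapses the integrand to $2n^2(\abs{\alpha_n}^2r^{2|n|-2}+\abs{\beta_n}^2r^{-2|n|-2})$, so the $n$-th radial integral yields
\[ \abs{n}(t^{2|n|}-1)\abs{\alpha_n}^2+\abs{n}(1-t^{-2|n|})\abs{\beta_n}^2. \]

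The remaining step is purely algebraic: substituting the explicit values of $\alpha_n,\beta_n$ and using $(t^{|n|}-t^{-|n|})^2=(t^{2|n|}-1)^2/t^{2|n|}$, the $n$-th mode contribution simplifies to
\[ \frac{\abs{n}}{t^{2|n|}-1}\bra{(t^{2|n|}+1)(\abs{a_n}^2+\abs{b_n}^2)-4t^{|n|}\Re(a_n\cdot\obar{b_n})}. \]
Writing $(t^{2|n|}+1)/(t^{2|n|}-1)=1+2/(t^{2|n|}-1)$ isolates the $\abs{n}(\abs{a_n}^2+\abs{b_n}^2)$ term, whose sum over $n\in\Z$ gives $\bnorm{(-\Delta)^{1/4}a}_{L^2}^2+\bnorm{(-\Delta)^{1/4}b}_{L^2}^2$ (modulo the factor $2\pi$). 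Finally, using the symmetries $\abs{a_{-n}}=\abs{a_n}$, $\abs{b_{-n}}=\abs{b_n}$, and $\Re(a_{-n}\cdot\obar{b_{-n}})=\Re(a_n\cdot\obar{b_n})$ (valid because $a,b$ are $\R^m$-valued), the sum over $n\neq 0$ collapses to twice the sum over $n>0$, producing exactly the factors $4n/(t^{2n}-1)$ and $8nt^n/(t^{2n}-1)$ appearing in \eqref{eq:annenergy} and in the definition of $\mathcal{B}_t((a,b),(a,b))$.

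I do not expect any genuine obstacle: all steps are standard Fourier computations on an annulus, and the only risk is bookkeeping errors when collecting the $\abs{a_n}^2,\abs{b_n}^2,\Re(a_n\cdot\obar{b_n})$ terms. The slightly more subtle point is the passage from smooth boundary data to general $H^{1/2}$ data, which follows from the fact that both the left-hand side and each summand on the right-hand side are continuous in the $H^{1/2}({\mathcal{S}}^1)$ topology (for the remainder, this is a consequence of Lemma \ref{bt}).
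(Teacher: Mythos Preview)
Your proposal is correct and follows essentially the same approach as the paper: write the harmonic extension explicitly as a Fourier series in $\theta$ with radial coefficients $\alpha_n r^{|n|}+\beta_n r^{-|n|}$ (the paper uses the equivalent parametrization $c_n r^n+c_n' r^{-n}$), compute the Dirichlet energy mode by mode via Parseval, and then rearrange algebraically using $a_{-n}=\obar{a_n}$, $b_{-n}=\obar{b_n}$ to match \eqref{eq:annenergy} and the definition of $\mathcal{B}_t$. The only slip is cosmetic: after multiplying by the Jacobian $r$, the collapsed integrand should read $2n^2(|\alpha_n|^2 r^{2|n|-1}+|\beta_n|^2 r^{-2|n|-1})$ rather than with exponents $2|n|-2$ and $-2|n|-2$, but your stated value of the radial integral is correct regardless.
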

\begin{proof} One can   check, e.g. by a density argument involving trigonometric polynomials, that the harmonic extension $\tilde f$ is given by
\begin{equation}\label{eq:annharmext} \tilde f(re^{i\theta})=a_0+\frac{b_0-a_0}{\log t}\log r+\sum_{n\neq 0} \frac{t^n b_n-a_n}{t^{2n}-1} r^n e^{in\theta}+\sum_{n\neq 0} \frac{t^{2n}a_n-t^n b_n}{t^{2n}-1} r^{-n}e^{in\theta}. \end{equation}
Calling
\[ \tilde c=\frac{b_0-a_0}{\log t},\quad c_n=\frac{t^n b_n-a_n}{t^{2n}-1},\quad c_n'=\frac{t^{2n}a_n-t^n b_n}{t^{2n}-1}, \]
we have
\begin{equation}\label{eq:anngrad} \begin{split} \frac{\de\tilde f}{\de r}(r,\theta)=&\,\tilde c r^{-1}+\sum_{n\neq 0}n(c_n r^{n-1}-c_n' r^{-n-1})e^{in\theta}, \\
\frac{1}{r}\frac{\de\tilde f}{\de\theta}=&\sum_{n\neq 0}in(c_n r^{n-1}+c_n'r^{-n-1})e^{in\theta}. \end{split} \end{equation}
Thus the Dirichlet energy of $\tilde{f}$ equals
\begin{align*} \int_{A_t}\babs{\nabla\tilde f}^2
=&2\pi\abs{\tilde c}^2\log t
+2\pi\sum_{n\neq 0}n\pa{\abs{c_n}^2(t^{2n}-1)-\abs{c_n'}^2(t^{-2n}-1)} \\
=&2\pi\frac{\abs{b_0-a_0}^2}{\log t}+2\pi\sum_{n\neq 0}\frac{n}{t^{2n}-1}\pa{\abs{t^n b_n-a_n}^2+\abs{t^n a_n-b_n}^2}. \end{align*}
Since $a_{-n}=\obar{a_n}$ and $b_{-n}=\obar{b_n}$, we deduce
\begin{align*} \frac{\int_{A_t}\babs{\nabla\tilde f}^2}{2\pi}
=&\frac{\abs{b_0-a_0}^2}{\log t}+\sum_{n\neq 0}\frac{n}{t^{2n}-1}\pa{(t^{2n}+1)(\abs{a_n}^2+\abs{b_n}^2)-4t^n\Re(a_n\cdot\obar{b_n})} \\
=&\frac{\abs{b_0-a_0}^2}{\log t}+\sum_{n\neq 0}\abs{n}(\abs{a_n}^2+\abs{b_n}^2) \\
&+\sum_{n>0}n\pa{\frac{4}{t^{2n}-1}(\abs{a_n}^2+\abs{b_n}^2)-\frac{8t^n}{t^{2n}-1}\Re(a_n\cdot\obar{b_n})}. \qedhere \end{align*}
\end{proof}
\begin{lemmaen}
 	Let $A_t:={B}_t\setminus B_1$. The normal derivatives on $\partial B_1$  and $\partial B_t$   are given by
	\begin{equation}\label{eq:annnormder} \frac{\de\tilde f}{\de\nu}(e^{i\theta})=(-\Delta)^{1/2}a+{\mathcal{R}_t}[a,b], \quad \frac{\de\tilde f}{\de\nu}(te^{i\theta})=t^{-1}(-\Delta)^{1/2}b+t^{-1}R_t[b,a], \end{equation}
	where 
	  $R_t:\mathcal{D}'({\mathcal{S}}^1,\R^m)^2\to C^\infty({\mathcal{S}}^1,\R^m)$ is a continuous linear operator defined by 
	 \begin{equation}\label{opR} {\mathcal{R}_t}[a,b](e^{i\theta}):=-\frac{b_0-a_0}{\log t}+\sum_{n>0}\frac{2n}{t^{2n}-1}(a_n-t^n b_n)e^{in\theta}+\sum_{n<0}\frac{2n}{t^{2n}-1}(t^{2n}a_n-t^n b_n)e^{in\theta}. \end{equation}
\end{lemmaen}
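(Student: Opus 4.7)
The plan is to start from the explicit formulas \eqref{eq:annharmext} for $\tilde f$ and \eqref{eq:anngrad} for its polar derivatives, which are already at our disposal. The outward unit normal to $A_t$ equals $-\partial_r$ on the inner circle $\partial B_1$ and $+\partial_r$ on the outer circle $\partial B_t$, so the task reduces to evaluating \eqref{eq:anngrad} at $r=1$ and $r=t$, with an appropriate sign, and then matching the resulting Fourier series with $(-\Delta)^{1/2}a + \mathcal{R}_t[a,b]$ and $t^{-1}((-\Delta)^{1/2}b + \mathcal{R}_t[b,a])$ respectively.

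Concretely, I would first rewrite $-\partial_r\tilde f(1,\theta)$ as $-\tilde c - \sum_{n\neq 0}n(c_n - c_n')e^{in\theta}$ and compute
\[ c_n - c_n' = \frac{2t^n b_n - (1+t^{2n})a_n}{t^{2n}-1}. \]
Then I would subtract the symbol $|n|\,a_n$ of $(-\Delta)^{1/2}a$ coefficient-by-coefficient: for $n>0$ the combination $-n(c_n-c_n') - n a_n$ simplifies to $\frac{2n(a_n-t^n b_n)}{t^{2n}-1}$, while for $n<0$ the combination $-n(c_n-c_n') + n a_n$ simplifies to $\frac{2n(t^{2n}a_n-t^n b_n)}{t^{2n}-1}$. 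The constant mode contributes exactly $-\frac{b_0-a_0}{\log t}$. These three pieces are precisely the definition \eqref{opR} of $\mathcal{R}_t[a,b]$, proving the first identity. For the outer boundary, I would similarly factor $t^{-1}$ out of $\partial_r\tilde f(t,\theta)$, use
\[ c_n t^n - c_n' t^{-n} = \frac{(t^{2n}+1)b_n - 2t^n a_n}{t^{2n}-1}, \]
and repeat the same matching, with the roles of $a$ and $b$ swapped.

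It remains to verify that $\mathcal{R}_t\colon \mathcal{D}'(\mathcal{S}^1,\R^m)^2\to C^\infty(\mathcal{S}^1,\R^m)$ is a continuous linear operator. For this I would observe that, writing $\mathcal{R}_t[a,b] = \sum_n \alpha_n e^{in\theta}$, the coefficients $\alpha_n$ are linear combinations of $a_n$ and $b_n$ with multipliers of the form $\frac{n}{t^{2n}-1}$ and $\frac{n t^n}{t^{2n}-1}$ for $n>0$, and $\frac{n t^{2n}}{t^{2n}-1}$, $\frac{n t^n}{t^{2n}-1}$ for $n<0$. Since $t>1$, all of these decay like $|n|\, t^{-|n|}$ as $|n|\to\infty$, so they dominate any power $|n|^{-s}$. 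Because any distribution on $\mathcal{S}^1$ has Fourier coefficients of at most polynomial growth, multiplication by these exponentially decaying symbols yields rapidly decreasing Fourier coefficients, hence a $C^\infty$ function; sequential continuity follows by the same estimate, exactly as in the proof of Lemma \ref{bt}.

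The only real obstacle is bookkeeping: one must keep the signs straight between $\partial_r$ and $\partial_\nu$ on the two components, and separately handle positive and negative Fourier modes because the exponent $t^{2n}$ explodes for $n>0$ but vanishes for $n<0$, which is why the formula \eqref{opR} treats the two regimes asymmetrically. No new analytic input is needed beyond the explicit harmonic extension \eqref{eq:annharmext} and the elementary algebra above.
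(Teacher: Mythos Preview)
Your proof is correct. It differs from the paper's primary argument, which obtains \eqref{eq:annnormder} by duality: writing $\int_{\partial A_t}\frac{\partial\tilde f}{\partial\nu}\,h=\int_{A_t}\nabla\tilde f\cdot\nabla\tilde h$ for a test function $h$, the paper invokes the energy decomposition \eqref{eq:annenergy} (polarized) and reads off the Fourier coefficients of the normal derivative from the bilinear form $\mathcal{B}_t$. You instead evaluate $\partial_r\tilde f$ directly from \eqref{eq:anngrad} at $r=1$ and $r=t$ and match coefficients by hand. The paper in fact notes this alternative in passing (``the formula \eqref{opR} can also be obtained directly from \eqref{eq:anngrad}''), so your route is the one the authors mention but do not spell out. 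The duality argument has the advantage of making the relation between $\mathcal{R}_t$ and $\mathcal{B}_t$ transparent without any case splitting on the sign of $n$; your direct computation is more self-contained and does not rely on Lemma~\ref{anndec}. Your treatment of the continuity of $\mathcal{R}_t$ via the exponential decay of the multiplier and polynomial growth of distributional Fourier coefficients is exactly what the paper does (by reference to Lemma~\ref{bt}).
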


\begin{proof}
	Let $\alpha(e^{i\theta}):=\frac{\de\tilde f}{\de\nu}(e^{i\theta})$ and $\beta(e^{i\theta}):=\frac{\de\tilde f}{\de\nu}(te^{i\theta})$. Given any $h\in C^\infty(\de A_t,\env)$, let $c(e^{i\theta}):=h(e^{i\theta})$ and $d(e^{i\theta}):=h(te^{i\theta})$. 
	Since $\tilde f$ is harmonic we get
	\begin{align*} &2\pi\sum_n\alpha_n\cdot\obar{c_n}+2\pi t\sum_n\beta_n\cdot\obar{d_n}=\int_{\de A_t}\frac{\de\tilde f}{\de\nu}h=\int_{A_t}\nabla\tilde f\cdot\nabla\tilde h\\
	&=\int_{{\mathcal{S}}^1}(-\Delta)^{1/4}a(-\Delta)^{1/4}c+\int_{{\mathcal{S}}^1}(-\Delta)^{1/4}b(-\Delta)^{1/4}d
	+{\mathcal{B}_t}((a,b),(c,d)). \end{align*}
	From this equation we easily get \eqref{eq:annnormder}, with ${\mathcal{R}_t}[a,b]$ given by $\eqref{opR}$.
	We observe that the formula \eqref{opR} can also be obtained directly from \eqref{eq:anngrad}.
	The continuity of ${\mathcal{R}_t}$ is proved by arguing as in the  proof of Lemma \ref{bt}.
\end{proof}

\begin{rmk}
	The symmetry ${\mathcal{B}_t}((a,b),(c,d))={\mathcal{B}_t}((b,a),(d,c))$, as well as the fact that the formulas for $t\frac{\de\tilde f}{\de\nu}(te^{i\theta})$ and $\frac{\de\tilde f}{\de\nu}(e^{i\theta})$ can be obtained from each other by exchanging $a$ and $b$, are not surprising in view of the existence of the conformal map
	\[ A_t\to A_t,\quad z\mapsto\frac{tz}{\abs{z}^2}, \]
	which exchanges the two boundary components.
\end{rmk}

\subsection{General compact surfaces with boundary}

The boundary $\de S$ is the disjoint union of finitely many circles diffeomorphic to ${\mathcal{S}}^1$:
\[ \de S=\bigsqcup_{j=1}^k C^{(j)}. \]
We can find, for each $j$, a smooth map
\[ \phi_j:[0,1]\times {\mathcal{S}}^1\to S \]
with the following properties:
\begin{itemize}
	\item $\phi_j$ is a diffeomorphism onto its image;
	\item $\phi_j(\set{0}\times {\mathcal{S}}^1)=C^{(j)}$;
	\item $\phi_j([0,1]\times {\mathcal{S}}^1)\cap\phi_{j'}([0,1]\times {\mathcal{S}}^1)=\emptyset$ for any $j\neq j'$.
\end{itemize}
Applying Theorem \ref{unifann} to the annulus
\[ A^{(j)}:=\phi_j([0,1]\times {\mathcal{S}}^1), \]
we can find a conformal transformation $\psi_j:A^{(j)}\to A_{t_j}$ (where $A_{t_j}:=\obar{B}_{t_j}\setminus B_1\subset\C$, equipped with the flat metric) such that $\psi_j(C^{(j)})=\de B_1$.
Finally, we call
\[ S':=S\setminus\bigsqcup_{j=1}^k\phi_j([0,1)\times {\mathcal{S}}^1). \]

The picture illustrates our decomposition of $S$.

\vspace{0.5cm}
\begin{center}
	\begin{overpic}[width=\linewidth]{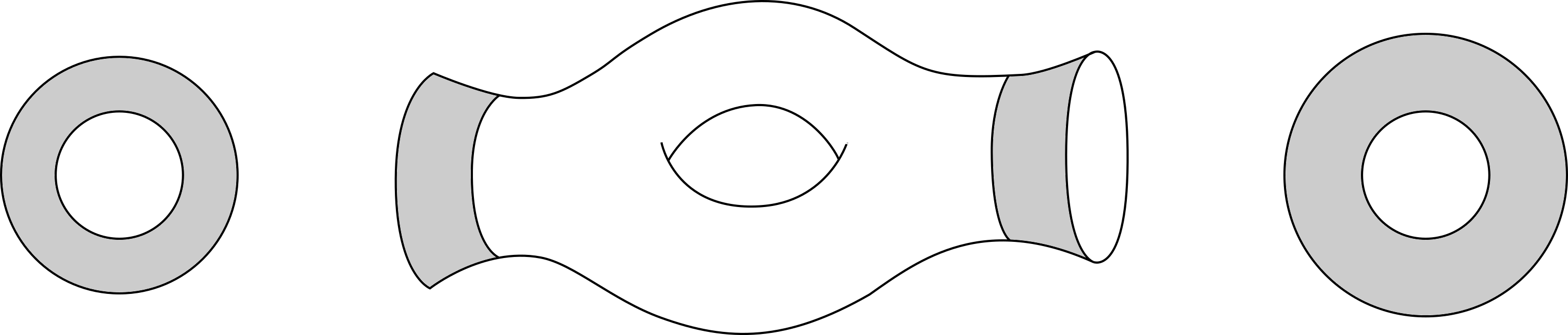}
		\put(6.25,15.25){$A_{t_1}$}
		\put(25.6,9.25){$A^{(1)}$}
		\put(17.25,9.25){\scalebox{1.5}{$\longleftarrow$}}
		\put(19.25,12){{$\psi_1$}}
		\put(21.5,3){$C^{(1)}$}
		\put(89.5,16){$A_{t_2}$}
		\put(74,9.25){\scalebox{1.5}{$\longrightarrow$}}
		\put(75.5,12){{$\psi_2$}}
		\put(63.6,10.5){$A^{(2)}$}
		\put(71.5,4.5){$C^{(2)}$}
	\end{overpic}
\end{center}
\vspace{0.5cm}

We notice that $S'$ is still a smooth surface with boundary
\[ \de S'=\bigsqcup_{j=1}^k\phi_j(\set{1}\times {\mathcal{S}}^1)=\bigsqcup_{j=1}^k\psi_j^{-1}(\de B_{t_j}). \]

\begin{lemmaen}
	For any $f\in H^{1/2}(\de S,\N)$ the $\mz$-energy $E(f)$ admits the decomposition
	\[ E(f,g)=\sum_j \norm{f_j}_{H^{1/2}}^2+{\mathcal{B}}_S((f_j)_{j=1}^k,(f_j)_{j=1}^k), \]
	where $f_j(e^{i\theta}):=f\circ\psi_j^{-1}(e^{i\theta})$ and ${\mathcal{B}}_S:\mathcal{D}'({\mathcal{S}}^1,\env)^k\times\mathcal{D}'({\mathcal{S}}^1,\env)^k\to\R$ is a sequentially continuous symmetric bilinear functional.
\end{lemmaen}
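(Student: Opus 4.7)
The plan is to split $S$ along the inner boundaries of the annular collars, writing $S = S' \cup \bigsqcup_{j=1}^k A^{(j)}$ (with overlap on a set of measure zero), so that
\[ E(f,g) = \int_{S'} \babs{\nabla \tilde f}^2 \, \dvg + \sum_{j=1}^k \int_{A^{(j)}} \babs{\nabla \tilde f}^2 \, \dvg. \]
By the conformal invariance of the two-dimensional Dirichlet energy and the fact that each $\psi_j$ is conformal, each term in the sum equals $\int_{A_{t_j}} |\nabla(\tilde f \circ \psi_j^{-1})|^2$, and $\tilde f \circ \psi_j^{-1}$ is precisely the harmonic extension on $A_{t_j}$ of the boundary data $f_j$ on $\partial B_1$ and $h_j(e^{i\theta}) := \tilde f(\psi_j^{-1}(t_j e^{i\theta}))$ on $\partial B_{t_j}$. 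Applying Lemma \ref{anndec} on each annulus gives
\[ \int_{A^{(j)}} \babs{\nabla \tilde f}^2 \, \dvg = \bnorm{(-\Delta)^{1/4} f_j}_{L^2}^2 + \bnorm{(-\Delta)^{1/4} h_j}_{L^2}^2 + \mathcal{B}_{t_j}((f_j, h_j),(f_j, h_j)). \]
Summing in $j$ and identifying $\sum_j \|(-\Delta)^{1/4} f_j\|_{L^2}^2$ with the first piece of the stated decomposition (a continuous $L^2$ correction, if any, being absorbed into the remainder), one is led to set
\[ \mathcal{B}_S(f,f) := \int_{S'} \babs{\nabla \tilde f}^2 \, \dvg + \sum_{j=1}^k \Bigl( \bnorm{(-\Delta)^{1/4} h_j}_{L^2}^2 + \mathcal{B}_{t_j}((f_j, h_j),(f_j, h_j)) \Bigr), \]
which, after polarization, defines a symmetric bilinear form on $H^{1/2}(\mathcal{S}^1, \env)^k$.

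The remaining task, and the crux of the lemma, is to extend $\mathcal{B}_S$ sequentially continuously to $\mathcal{D}'(\mathcal{S}^1, \env)^k$ in each slot. The key observation is that the auxiliary boundary data $(h_j)$ depend smoothly on $f$ even when $f$ is only a distribution. Indeed, the harmonic extension admits a Poisson-kernel representation
\[ \tilde f(x) = \sum_{\ell=1}^k \int_{\mathcal{S}^1} P_\ell(x,\omega)\, f_\ell(\omega)\, d\omega \quad \text{for } x \in \interna{S}, \]
and $P_\ell(x,\cdot)$ is smooth on $\mathcal{S}^1$ whenever $x$ lies at positive distance from $\partial S$, by standard interior regularity of harmonic functions (equivalently, smoothness of the Poisson kernel off the diagonal). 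Since the inner circles $\phi_j(\{1\}\times \mathcal{S}^1)$ are compactly contained in $\interna{S}$, duality in $\omega$ defines a sequentially continuous linear operator
\[ \mathcal{D}'(\mathcal{S}^1, \env)^k \to C^\infty(\mathcal{S}^1, \env)^k, \quad (f_\ell) \mapsto (h_j). \]

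With this smoothing property in hand, every term of $\mathcal{B}_S(f,f)$ is manifestly sequentially continuous in $f$: the quantity $\|(-\Delta)^{1/4} h_j\|_{L^2}^2$ is a continuous quadratic functional of the smooth $h_j$; $\mathcal{B}_{t_j}$ is already known to be sequentially continuous on distributions by Lemma \ref{bt}; and the interior piece $\int_{S'} |\nabla \tilde f|^2 \, \dvg$ coincides with the $\mz$-energy on $S'$ of the boundary data $(h_j)$ (since $\tilde f|_{S'}$ is harmonic with these traces), and is bounded by $C \sum_j \|h_j\|_{H^{1/2}}^2$, hence continuous in the $h_j$ and therefore in $f$. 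Symmetry is inherited term-by-term after polarization. The main obstacle is precisely the distributional smoothing step: one must carefully justify the Poisson-kernel representation in $\interna{S}$ and verify that it defines a sequentially continuous map into $C^\infty$ on compact subsets away from $\partial S$, so that the formula for $\mathcal{B}_S$ makes sense even when $f$ carries no trace on $\partial S$. Once this is established, the rest of the argument is bookkeeping.
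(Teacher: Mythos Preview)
Your proposal is correct and follows essentially the same route as the paper: split $S$ into $S'$ and the conformal annular collars, apply Lemma~\ref{anndec} on each collar, and use the Poisson/Green kernel representation to show that the interior traces $h_j$ (the paper's $\kappa_j$) and $\tilde f|_{S'}$ depend sequentially continuously on $f\in\mathcal{D}'$ via the smoothing effect of harmonic extension away from $\partial S$. One caveat: your parenthetical about absorbing an ``$L^2$ correction'' into $\mathcal{B}_S$ would fail if the $H^{1/2}$ norm in the statement is read as the inhomogeneous one (an $L^2$-type term is \emph{not} sequentially continuous on $\mathcal{D}'$), but the paper's own computation in fact yields $\sum_j\|(-\Delta)^{1/4}f_j\|_{L^2}^2$ directly, so no such correction is actually needed.
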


\begin{proof}
	Let $G\in C^\infty((\accentset{\circ}{S}\times S)\setminus\Delta)$ be the Green function for the Dirichlet problem (see e.g. \cite[Theorem~4.17]{Aub98}), satisfying for each $x\in\accentset{\circ}{S}$
	\[ \begin{cases}-\Delta_g G(x,\cdot)=\delta_x & \text{on }S \\ G(x,\cdot)=0 & \text{on }\de S \end{cases} \]
	and let $H\in C^\infty(\accentset{\circ}{S}\times\de S)$ which is defined, for any fixed $x\in\accentset{\circ}{S}$, by the formula $H(x,\cdot):=-\frac{\de}{\de\nu}G(x,\cdot)$. For any $f\in H^{1/2}(\de S,\R^m)$ and any $x\in\interna{S}$, the harmonic extension is given by the formula
	\[ \tilde f(x)=\int_{\de S}H(x,y)f(y)\,\dvg(y). \]
	Now $\set{H(x,\cdot)\mid x\in S'}$ is a compact subset
	of $C^\infty(\de S)$ and in particular is bounded in $C^k(\de S)$ for all $k\ge 0$. The same holds for the derivatives of any order in $x$. Therefore the map
	\[ \mathcal{D}'(\de S,\R^m)\to C^\infty(S',\R^m),\quad f\mapsto\restr{\tilde f}{S'} \]
	given by the above formula is sequentially continuous. In particular, $(f,h)\mapsto\int_{S'}\big\langle\nabla\tilde f,\nabla\tilde h\big\rangle\,\dvg$ defines a sequentially continuous symmetric bilinear operator on $\mathcal{D}'(\de S,\env)\times\mathcal{D}'(\de S,\env)$.

	Moreover, for any $j\in\set{1,\dots,k}$, let
	\[ \kappa_j(e^{i\theta}):=\tilde f\circ\psi_j^{-1}(t_je^{i\theta})\in H^{1/2}({\mathcal{S}}^1,\R^m). \]
	By conformal invariance we have $\Delta(\tilde f\circ\psi_j^{-1})=0$ on $A_{t_j}$ and
	\begin{align*} E^{(j)}\pa{\restr{\tilde f}{\de A^{(j)}},g}=\int_{A_{t_j}}\babs{\nabla(\tilde f\circ\psi_j^{-1})}^2 dx &=\bnorm{(-\Delta)^{1/4}f_j}_{L^2}^2+\bnorm{(-\Delta)^{1/4}\chi_j}_{L^2}^2 \\
	&\quad+{\mathcal{B}}_{t_j}((f_j,\kappa_j),(f_j,\kappa_j)) \end{align*}
	by Lemma \ref{anndec}. We remark that $f\mapsto \kappa_j$ is sequentially continuous as a linear map $\mathcal{D}'(\de S,\R^m)\to C^\infty({\mathcal{S}}^1,\R^m)$.
	Finally, we can write
	\[ E(f,g)=\sum_{j=1}^k\int_{A^{(j)}}\babs{\nabla\tilde f}^2\,\dvg+\int_{S'}\babs{\nabla\tilde f}^2\,\dvg
	=\sum_{j=1}^k\bnorm{(-\Delta)^{1/4}f_j}_{L^2}^2+{\mathcal{B}}_S((f_j),(f_j)), \]
	where for any $f,h\in H^{1/2}(\de S,\env)$ we let
	\begin{align*} {\mathcal{B}}_S((f_j),(h_j))&:=\sum_{j=1}^k\int_{{\mathcal{S}}^1}(-\Delta)^{1/4}\kappa_j(-\Delta)^{1/4}\xi_j+\sum_{j=1}^k B_{t_j}((f_j,\kappa_j),(h_j,\xi_j)) \\
	&\quad+\int_{S'}\big\langle\nabla\tilde f;\nabla\tilde h\big\rangle\,\dvg, \end{align*}
	where $h_j(e^{i\theta}):=h\circ\psi_j^{-1}(e^{i\theta})$ and $\xi_j(e^{i\theta}):=\tilde h\circ\psi_j^{-1}(t_je^{i\theta})$.
\end{proof}

\begin{lemmaen}\label{decnd}
	For any $\ell=1,\dots,k$, the normal derivative on $C^{(\ell)}$ is given by
	\[ \frac{\de\tilde f}{\de\nu}=e^{\lambda_\ell}((-\Delta)^{1/2}f_j)\circ\psi_\ell+e^{\lambda_\ell}R_\ell((f_j)_{j=1}^k)\circ\psi_\ell, \]
	where $e^{2\lambda_\ell}$ is defined by $g=e^{2\lambda_\ell}\psi_\ell^*(g_{\R^2})$ and $R_\ell:\mathcal{D}'({\mathcal{S}}^1,\R^m)^k\to C^\infty({\mathcal{S}}^1,\R^m)$ is a sequentially continuous linear operator.
\end{lemmaen}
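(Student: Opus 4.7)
The plan is to reduce the computation to the flat-annulus setting via the conformal parametrization $\psi_\ell\colon A^{(\ell)}\to A_{t_\ell}$ introduced above, apply the normal-derivative formula \eqref{eq:annnormder}, and then transfer back to $C^{(\ell)}$ using the conformal factor. Sequential continuity of $R_\ell$ will then follow by combining the exponential decay of the Fourier symbols in \eqref{opR} with the Green-function argument already used in the proof of the previous lemma.

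First, I would set $\tilde v:=\tilde f\circ\psi_\ell^{-1}$. By conformal invariance of the Laplace operator in dimension two, $\tilde v$ is harmonic on $A_{t_\ell}$, with boundary traces $\tilde v|_{\partial B_1}=f_\ell$ and $\tilde v|_{\partial B_{t_\ell}}=\kappa_\ell$ (in the notation of the previous lemma). Applying \eqref{eq:annnormder} on the inner circle yields
\[ \frac{\partial\tilde v}{\partial\nu_0}=(-\Delta)^{1/2}f_\ell+\mathcal{R}_{t_\ell}[f_\ell,\kappa_\ell]\qquad\text{on }\partial B_1, \]
where $\nu_0$ is the outward Euclidean unit normal of $A_{t_\ell}$. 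Next, I would transfer this identity to $C^{(\ell)}$: from $g=e^{2\lambda_\ell}\psi_\ell^*(g_{\R^2})$, the $g$-unit normal $\nu$ to $C^{(\ell)}$ and the Euclidean unit normal $\nu_0$ at the corresponding point of $\partial B_1$ are related by an $e^{\pm\lambda_\ell}$ rescaling, which combined with the chain rule $d\tilde f(\nu)=d\tilde v(d\psi_\ell\,\nu)$ produces the prefactor appearing in the statement.

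It then remains to set $R_\ell((f_j)_{j=1}^k):=\mathcal{R}_{t_\ell}[f_\ell,\kappa_\ell]$ and verify sequential continuity as a map $\mathcal{D}'(\mathcal{S}^1,\R^m)^k\to C^\infty(\mathcal{S}^1,\R^m)$, which I would split into two ingredients: (a)~the Fourier multipliers defining $\mathcal{R}_{t_\ell}$ in \eqref{opR} decay exponentially in $|n|$ in both arguments (since $t_\ell>1$), so $\mathcal{R}_{t_\ell}$ is a smoothing operator $\mathcal{D}'(\mathcal{S}^1,\R^m)^2\to C^\infty(\mathcal{S}^1,\R^m)$, by exactly the uniform-boundedness-plus-compact-embedding argument used in the proof of Lemma \ref{bt}; (b)~the map $(f_j)_{j=1}^k\mapsto\kappa_\ell$ is sequentially continuous from $\mathcal{D}'$ into $C^\infty(\mathcal{S}^1,\R^m)$, because $\kappa_\ell$ consists of values of $\tilde f$ on the curve $\psi_\ell^{-1}(\partial B_{t_\ell})\subset\accentset{\circ}{S}$, and the Green-function representation recalled in the previous proof provides exactly the sequentially continuous extension $\mathcal{D}'(\partial S,\R^m)\to C^\infty(S',\R^m)$. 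Composing (a)~and~(b) gives the claim. The only delicate part, and the main point requiring care, will be the bookkeeping of the conformal factor (and of identifying which entry of $\mathcal{R}_{t_\ell}$ corresponds to the inner boundary at which we compute the derivative); everything else is a direct consequence of \eqref{eq:annnormder} and the continuity estimates already established.
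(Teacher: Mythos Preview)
Your proposal is correct and follows essentially the same route as the paper: reduce to the flat annulus via $\psi_\ell$, apply \eqref{eq:annnormder} on $\partial B_1$, and set $R_\ell((f_j)_{j=1}^k):=\mathcal{R}_{t_\ell}[f_\ell,\kappa_\ell]$, with sequential continuity coming from the smoothing nature of $\mathcal{R}_{t_\ell}$ together with the Green-function argument for $(f_j)\mapsto\kappa_\ell$. The only cosmetic difference is that the paper carries out the transfer by testing against $\varphi\in C^\infty(S,\R^m)$ supported in $A^{(\ell)}$ and using conformal invariance of the Dirichlet integral, whereas you phrase it as a pointwise chain-rule computation for the normal derivative; the two are equivalent, and your description of the conformal-factor bookkeeping is the point where care is indeed needed.
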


\begin{proof}
	Indeed, for any $\varphi\in C^\infty(S,\R^m)$ supported in $\phi_\ell([0,1)\times {\mathcal{S}}^1)$,
	\begin{align*} &\int_{C^{(\ell)}}e^{\lambda_\ell}\varphi\cdot((-\Delta)^{1/2}f_\ell)\circ\psi_\ell\,\dvg
	=\int_{{\mathcal{S}}^1}\varphi\circ\psi_\ell^{-1}\cdot(-\Delta)^{1/2}f_\ell \\
	&=\int_{\de A_{t_j}}\varphi\circ\psi_\ell^{-1}\cdot\frac{\de(\tilde f\circ\psi_\ell^{-1})}{\de\nu}
	-\int_{{\mathcal{S}}^1}\varphi\circ\psi_\ell^{-1}\cdot R_{t_\ell}[f_\ell,g_\ell] \\
	&=\int_{A_{t_\ell}}\big\langle\nabla(\varphi\circ\psi_\ell^{-1});\nabla(\tilde f\circ\psi_\ell^{-1})\big\rangle
	-\int_{C^{(\ell)}}e^{\lambda_\ell}\varphi\cdot R_{t_\ell}[f_\ell,g_\ell]\circ\psi_\ell, \end{align*}
	where the operator $R_{t_\ell}$ is provided by \eqref{eq:annnormder}.
	But, by conformality of $\psi_\ell$,
	\[ \int_{A_{t_\ell}}\big\langle\nabla(\varphi\circ\psi_\ell^{-1});\nabla(\tilde f\circ\psi_\ell^{-1})\big\rangle
	=\int_S\big\langle\nabla\varphi;\nabla\tilde f\big\rangle\,\dvg=\int_{\de S}\varphi\cdot\frac{\de\tilde f}{\de\nu}\,\dvg \]
	and thus we can let $R_\ell((f_j)_{j=1}^k):=R_{t_\ell}[f_\ell,\kappa_\ell]$.
\end{proof}

\section{Conformality of the harmonic extension}\label{confsec}

This section is devoted to show that, if the energy of the harmonic extension $\tilde{u}$ is also critical with respect to variations of the conformal class, then $\tilde{u}$ is conformal.
We will use the \teich space $\tch(S)$ of $S$ to describe such variations. Throughout the section we will assume that $S$ is orientable (actually this hypothesis can be dropped: one can repeat the same theory on the two-sheeted oriented cover $\tilde S$, restricting to equivariant metrics and variations).

We will start from the easier case of the annulus, which can be treated in an elementary fashion (due to the simple explicit form of its \teich space).

\begin{rmk}
	In the disk case, i.e. $S=\mathbb{D}$, conformality holds automatically for $\mz$-harmonic maps (and indeed in this case $\tch(S)$ is trivial).
\end{rmk}

Recall that the disk and the annulus have Euler characteristic $1$ and $0$, respectively. If the surface $S$ has a different topology, then its Euler characteristic is
\[ \chi(S)=2-2g-k<0 \]
(with an abuse of notation, we denote by $g$ also the genus of $S$, while $k\ge 1$ is the number of boundary components). Thus $S$ is intrinsically \emph{hyperbolic}, namely by Gauss--Bonnet theorem any constant curvature metric such that $\de S$ is totally geodesic must have negative curvature. In this case $\tch(S)$ does not possess an immediate presentation as for the annulus, although it is well-known that it is diffeomorphic to $\R^{6g+3k-6}$ (and can be parametrized by means of the so-called Fenchel--Nielsen coordinates). The precise definition of $\tch(S)$ is given below.

\subsection{The annular case}

If $S$ is diffeomorphic to an annulus, then up to a conformal diffeomorphism we can assume that $(S,g)=(A_t,g_{\R^2})$ for some $t>0$, thanks to Theorem \ref{unifann}. A variation of the conformal class (or, more precisely, the conformal class up to diffeomorphisms isotopic to the identity) corresponds to a variation of the parameter $t$.
 
For any $a,b\in H^{1/2}({\mathcal{S}}^1,\R^m)$, we define  $u[t]\in H^{1/2}(\de A_t,\R^m)$  by $u[t](e^{i\theta}):=a(e^{i\theta})$, $u[t](te^{i\theta}):=b(e^{i\theta})$. We will denote by
$\tu[t]$ the harmonic extension of $u[t].$
\begin{lemmaen}\label{varannulus}
	For any  $a,b\in H^1({\mathcal{S}}^1,\R^m)$ we have
	\[ \frac{d}{dt}E_t(u[t])=\int_{\de B_t}\pa{\frac{1}{t^2}\abs{\frac{\de\tu}{\de\theta}}^2-\abs{\frac{\de\tu}{\de r}}^2}. \]
\end{lemmaen}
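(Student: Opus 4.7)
My plan is to exploit the $2$-dimensional conformal invariance of the Dirichlet energy to pass from the annulus $A_t$ to a flat cylinder of variable height, so that the family of problems is reduced to harmonic extensions on domains with a single moving boundary. Concretely, set $T:=\log t$ and $C_T:=[0,T]\times\mathcal{S}^1$, and define
\[ v(s,\theta;T):=\tu[t](e^s e^{i\theta}). \]
Then $v$ is harmonic on $C_T$ (with respect to the flat metric) with \emph{$T$-independent} boundary data $v(0,\cdot;T)=a$, $v(T,\cdot;T)=b$, and by conformal invariance
\[ F(T):=E_t(u[t])=\int_0^T\!\!\int_{\mathcal{S}^1}|\nabla v|^2\,ds\,d\theta. \]

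The heart of the proof is then to differentiate $F$ with respect to $T$. By Leibniz' rule for the moving upper limit plus a standard interior perturbation,
\[ F'(T)=\int_{\mathcal{S}^1}\babs{\nabla v}^2(T,\theta)\,d\theta+2\int_{C_T}\ang{\nabla v;\nabla(\de_T v)}\,ds\,d\theta. \]
For the second integral I would integrate by parts using $\Delta v=0$, reducing it to a boundary term $2\int_{\de C_T}(\de_T v)\,\de_\nu v$. Here one needs to identify $\de_T v$ on $\de C_T$: at $s=0$ the boundary datum is $a$ independently of $T$, so $\de_T v=0$; at $s=T$ the identity $v(T,\theta;T)=b(e^{i\theta})$ differentiated in $T$ yields $\de_s v+\de_T v=0$, hence $\de_T v=-\de_s v$. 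The interior contribution therefore equals $-2\int_{\mathcal{S}^1}|\de_s v|^2(T,\theta)\,d\theta$, and combining everything one obtains the clean formula
\[ F'(T)=\int_{\mathcal{S}^1}\bigl(|\de_\theta v|^2-|\de_s v|^2\bigr)(T,\theta)\,d\theta. \]
Transcribing back via $\de_s v=t\,\de_r\tu$, $\de_\theta v=\de_\theta\tu$ on $\de B_t$, using $\frac{dE_t}{dt}=\frac{F'(T)}{t}$, and noting that the line element on $\de B_t$ is $t\,d\theta$ gives exactly the claimed identity.

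The main technical issue is justifying both the integration by parts and the smooth dependence of $v$ on $T$. This is where the hypothesis $a,b\in H^1(\mathcal{S}^1)$ enters: from the explicit Fourier representation \eqref{eq:annharmext} of the harmonic extension, the coefficients of $v$ are analytic functions of $T$, and the $H^1$ bound $\sum n^2(|a_n|^2+|b_n|^2)<\infty$ ensures term-by-term differentiability in $T$ and $L^2$ traces of $\de_s v,\de_\theta v$ on $\de C_T$ (equivalently, $\tu[t]\in H^{3/2}(A_t)$ by standard elliptic regularity up to the boundary). With this in hand, each step of the computation is valid in the classical sense. If one preferred to avoid this indirect regularity argument altogether, the whole lemma could also be verified by plugging \eqref{eq:annharmext} into both sides and matching Fourier coefficients, but the conceptual domain-variation argument above is more transparent and generalizes to the hyperbolic case used later in Section \ref{confsec}.
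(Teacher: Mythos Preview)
Your proof is correct and follows essentially the same domain-variation argument as the paper: Leibniz' rule for the moving boundary, integration by parts using harmonicity, and differentiation of the boundary constraint $v(T,\theta;T)=b(e^{i\theta})$ (respectively $\tilde u[s](se^{i\theta})=b(e^{i\theta})$) to identify the parameter derivative on the outer boundary. The only differences are cosmetic---the paper works directly on $A_t$ rather than first passing to the cylinder $C_T$, and it disposes of the regularity issue by reducing to $a,b\in C^\infty$ via density (using the explicit formula \eqref{eq:annenergyint} to see that $E_t(u[t])$ depends smoothly on $(t,a,b)$) instead of arguing directly from the $H^1$ hypothesis.
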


\begin{proof}
	We can assume $a,b\in C^{\infty}(\mathcal S^1,\env)$ (by a density argument, using the fact that $E_t(u[t])$ depends smoothly on $(t,a,b)\in(1,\infty)\times H^{1/2}({\mathcal{S}}^1)\times H^{1/2}({\mathcal{S}}^1)$, as can be seen from the explicit formula \eqref{eq:annenergyint}). So $\tilde u[t](z)$ defines a smooth function on the set
	\[ \set{(t,z)\in (1,+\infty)\times\C:1\le\abs{z}\le t} \]
	(here $\tilde u[t]\in H^1(A_t,\R^m)$ is the harmonic extension of $u[t]$).
	By the divergence theorem we have
	\begin{align*} \frac{d}{dt}\pa{\int_{A_t}\abs{\nabla\tu[t]}^2}=&\int_{\de B_t}\abs{\nabla\tu[t]}^2+2\int_{A_t}\big\langle\nabla\tu[t];\nabla\pa{\left.\frac{d\tu[s]}{ds}\right|_{s=t}}\big\rangle \\
	=&\int_{\de B_t}\pa{\abs{\frac{\de\tu[t]}{\de r}}^2+\frac{1}{t^2}\abs{\frac{\de\tu[t]}{\de\theta}}^2}+2\int_{\de B_t}\frac{\de\tu[t]}{\de r}\cdot\pa{\left.\frac{d\tu[s]}{ds}\right|_{s=t}} \end{align*}
	(as $\frac{d\tu}{ds}=0$ on $\de B_1$). Differentiating the identity $\tu[s](se^{i\theta})=b(e^{i\theta})$ in $s$ we get
	\[ \left.\pa{\frac{d\tu[s]}{ds}}(se^{i\theta})\right|_{s=t}=-\frac{\de\tu[t]}{\de r}(t e^{i\theta}). \]
	Hence, combining these identities,
	\[ \frac{d}{dt}E_t(u[t])=\int_{\de B_t}\pa{\abs{\frac{\de\tu[t]}{\de r}}^2+\frac{1}{t^2}\abs{\frac{\de\tu[t]}{\de\theta}}^2}
	-2\int_{\de B_t}\abs{\frac{\de\tu[t]}{\de r}}^2
	. \qedhere \]
\end{proof}

\begin{proof}[Proof of Theorem \ref{confannIntr}]We introduce the Hopf differential
	\[ h(z):=H(z)\,dz\otimes dz,\quad H(z):=\frac{\de\tu}{\de z}\cdot\frac{\de\tu}{\de z}=\frac{1}{4}\pa{\abs{\frac{\de\tu}{\de x}}^2-\abs{\frac{\de\tu}{\de y}}^2-2i\frac{\de\tu}{\de x}\cdot\frac{\de\tu}{\de y}}. \]
	A well-known straightforward computation shows that $h$ is a holomorphic quadratic differential, i.e. $H$ is holomorphic, vanishing identically if and only if $\tilde u$ is (weakly) conformal. From Theorem \ref{regul} it follows   in particular that  $u\in C^1(\partial A_t)$.  
	Since $0 = P^T(u)\,\partial_{r}\tu$ and $0=P^N(u)\,\partial_{\theta}u=P^N(u)\,\partial_{\theta}\tu$ on $\partial A_t$, we have that
	\[ \frac{\de\tu}{\de r}\cdot\frac{\de\tu}{\de\theta}=0\text{ on }\de A_\obt. \]
	By the maximum principle we deduce that,
	for  any $z=re^{i\theta}\in\iA_\obt$,
	\[ -2\Im((re^{i\theta})^2 H(re^{i\theta}))=r\frac{\de\tu}{\de r}(re^{i\theta})\cdot\frac{\de\tu}{\de\theta}(re^{i\theta})=0. \]
	
	Thus the harmonic function $\Im(z^2 H(z))$ vanishes identically. Since $z^2 H(z)$ is holomorphic,
	it must coincide with a real constant $c$. By Lemma \ref{varannulus}, $c=0$ precisely when $\left.\frac{d}{dt}E_t(u[t])\right|_{t=\obar{t}}=0$, since
	\[ -4c=-4\Re((re^{i\theta})^2 H(re^{i\theta}))=\abs{\frac{\de\tu}{\de\theta}(re^{i\theta})}^2-r^2\abs{\frac{\de\tu}{\de r}(re^{i\theta})}^2. \qedhere \]
\end{proof}

\begin{rmk}\label{diskconf}
	In the disk case we get $z^2 H(z)=c$ for some real $c$, hence $c=0$ (being $H$ bounded near the origin) and $H(z)=0$.
\end{rmk}

\subsection{The hyperbolic case}

%

Assume now that $\chi(S)<0$ (i.e. $S$ is not a disk nor an annulus) and $\N$ is $C^\infty$-smooth.
Let $\mathcal{M}(S)$ be the space of all Riemannian metrics on $S$ and $\mathcal{P}(S)$ the space of all smooth positive functions $S\to\R$. $\mathcal{M}(S)$ is an open subset of the Fr\'echet space $\Gamma(S^2 S)$ (smooth symmetric covariant $2$-tensors on $S$).
The quotient
\[ \mathcal{C}(S):=\mathcal{M}(S)/\mathcal{P}(S) \]
is the set of conformal classes on $S$. Moreover, let $\mathcal{M}_{-1}(S)\subseteq\mathcal{M}(S)$
be the subset of metrics having constant curvature $-1$ and making $\de S$ totally geodesic.
Every equivalence class $[g]\in\mathcal{C}(S)$ has exactly one representative $e^{2\lambda}g\in\mathcal{M}_{-1}(S)$, $\lambda\in C^\infty(S)$ being a solution of Liouville's equation
\[ \begin{cases}\Delta\lambda=K+e^{2\lambda} & \text{on }S \\ \frac{\de\lambda}{\de\nu}=\kappa & \text{on }\de S, \end{cases} \]
where $K$ is the Gaussian curvature of $g$ and $\kappa$ is the geodesic curvature of the boundary (i.e. $\kappa=\ang{\nabla_{\dot\gamma}\dot\gamma,\nu}$ if $\de S$ is locally parametrized by a unit-speed curve $\gamma$).
The map
\[ \upsilon:\mathcal{M}(S)\to\mathcal{M}_{-1}(S), \quad \upsilon(g):=e^{2\lambda}g \]
is $C^\infty$-smooth (as a map from $\mathcal{M}(S)$ into itself).

In order to have a finite-dimensional space, we quotient $\mathcal{C}(S)$ by the (right) action of the group $\mathcal{D}_0(S)$ of diffeomorphisms isotopic to the identity.
The set
\[ \tch(S):=\mathcal{C}(S)/\mathcal{D}_0(S) \]
is the \teich space of $S$. It can be given a canonical structure of $(6g+3k-6)$-dimensional differentiable manifold. The resulting map $\pi:\mathcal{M}(S)\to\tch(S)$ is smooth and admits locally a smooth section taking values into $\mathcal{M}_{-1}(S)$.

For the proofs of these facts, we refer the reader to \cite{FT84}, where the \teich theory for closed surfaces is developed. See also \cite[Section~4.3]{DHT92}, which illustrates the necessary modifications for surfaces with boundary (using the convenient device of the Schottky double).

\begin{thm}\label{hypconf}
	Let $(S,g)$ be a Riemannian surface with $\de S\neq\emptyset$, $\chi(S)<0$ and let $\phi:U\to\mathcal{M}(S)$ be a local smooth section of $\pi$ through $g$ (i.e. $\pi(g)\in U$ and $\phi(\pi(g))=g$). If $u\in H^{1/2}(\de S,\mathcal{N})$ is $\mz$-harmonic with respect to $g$, then $\tilde{u}:(S,g)\to\R^m$ is conformal if and only if $\pi(g)$ is a critical point for the map
	\[ p\mapsto E(u,\phi(p)). \]
\end{thm}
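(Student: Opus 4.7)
The strategy is to compute the first variation of $E$ with respect to a change of metric, identify it with an $L^2$ pairing involving the Hopf differential of $\tilde u$, and then invoke the duality between holomorphic quadratic differentials and the tangent space of \teich space. By the conformal invariance of $E$ noted after \eqref{energy}, one may first assume that the section $\phi$ takes values in $\mathcal{M}_{-1}(S)$. Modulo Lie derivatives $\mathcal{L}_X g$ along vector fields $X$ tangent to $S$ (representing tangent directions of $\mathcal{D}_0$-orbits), the differential $d\pi$ then identifies $T_{\pi(g)}\tch(S)$ with the space of transverse--traceless symmetric $2$-tensors on $(S,g)$ satisfying the appropriate boundary conditions dictated by the totally-geodesic constraint.

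Next I compute the first variation. For a smooth path $g_t = \phi(p_t)$ with $\dot g_0 =: h$, the harmonic extension $\tilde u_{g_t}$ depends smoothly on $t$, but since $\tilde u=\tilde u_g$ is a critical point of $E(\cdot;g)$ among $H^1$ maps with trace $u$ on $\de S$, the induced variation of $\tilde u_{g_t}$ contributes nothing at first order. A direct computation using $\delta g^{ij} = -h^{ij}$ and $\delta\,d\mathrm{vol}_g = \tfrac{1}{2}(\mathrm{tr}_g h)\,d\mathrm{vol}_g$ then gives
\[ \frac{d}{dt}\bigg|_{t=0} E(u, g_t) = -\int_S \langle h, T\rangle_g\,\dvg, \qquad T_{ij} := \de_i\tilde u\cdot\de_j\tilde u - \tfrac{1}{2}|d\tilde u|_g^2\, g_{ij}, \]
which is traceless in dimension two. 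In an isothermal chart $z=x+iy$, setting $\mathcal H := \de_z\tilde u\cdot\de_z\tilde u$ so that the Hopf differential of $\tilde u$ reads $\mathcal H\,dz^2$, we have $T_{11}=-T_{22}=2\Re\mathcal H$ and $T_{12}=T_{21}=-2\Im\mathcal H$, and the integrand becomes a multiple of $\Re(\mathcal H\,\bar\phi)$ where $\phi\,dz^2$ is the $(2,0)$-part of $h$. Harmonicity of $\tilde u$ implies $\mathcal H$ is holomorphic on $\interna{S}$; moreover, by Theorem \ref{regul} $\tilde u\in C^1$ up to $\de S$, so combining the $\mz$-harmonicity relation $P^T(u)\de_\nu\tilde u=0$ with $\de_\tau\tilde u|_{\de S}\in T_u\N$ yields $\de_\nu\tilde u\cdot\de_\tau\tilde u=0$ along $\de S$, which forces $\mathcal H\,dz^2$ to be real on $\de S$.

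The space of holomorphic quadratic differentials on $S$ which are real along $\de S$---equivalently, invariant under the anti-holomorphic involution on the Schottky double---is a real vector space of dimension $6g+3k-6 = \dim_\R\tch(S)$, and the $L^2$ pairing $(h,\eta)\mapsto\Re\int_S\eta\,\bar\phi\,dA$ is non-degenerate, identifying it with $T^*_{\pi(g)}\tch(S)$; see \cite[Section~4.3]{DHT92}. If $\tilde u$ is conformal then $\mathcal H\equiv 0$, hence $T\equiv 0$ and $\pi(g)$ is trivially critical. Conversely, if $\pi(g)$ is critical, then $\mathcal H$ (which by the previous paragraph sits in this cotangent space) annihilates all of $T_{\pi(g)}\tch(S)$, forcing $\mathcal H\equiv 0$, i.e., $\tilde u$ is weakly conformal. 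One also checks that the criticality condition does not depend on the chosen section: any two sections differ by a conformal rescaling (leaving $E$ invariant) composed with a $\mathcal{D}_0$-isotopy $\psi_t$, whose infinitesimal effect on $E$ is the boundary integral $-2\int_{\de S}\de_\nu\tilde u\cdot du(X)\,\dvg$, which vanishes because $du(X)|_{\de S}\in T_u\N$ and $u$ is $\mz$-harmonic.

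The main obstacle lies in the Teichm\"uller-theoretic identification of $T_{\pi(g)}\tch(S)$ with transverse--traceless tensors subject to the correct boundary conditions, and in the accompanying duality with boundary-real holomorphic quadratic differentials; this passes through the Schottky double construction of \cite[Section~4.3]{DHT92}. Once these identifications are granted, the analytic computation of the first variation is routine thanks to the regularity of $\tilde u$ supplied by Theorem \ref{regul}.
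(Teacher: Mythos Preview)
Your proposal is correct and follows essentially the same approach as the paper: both reduce to a section with values in $\mathcal{M}_{-1}(S)$, compute the first variation of $E$ in the metric as a pairing with the stress-energy tensor (the real part of the Hopf differential), use $\mz$-harmonicity together with Theorem \ref{regul} to show that the Hopf differential is holomorphic and real along $\de S$, kill the $\mathcal{L}_Xg$-contribution via the boundary integral and \eqref{eq:hhfchar}, and conclude from the identification of boundary-real holomorphic quadratic differentials with $T_{\pi(g)}\tch(S)$. The only cosmetic difference is that the paper tests explicitly against $v=d\pi_g[\Re(h)]$ to obtain $\int_S|\Re(h)|_g^2=0$, whereas you phrase the same step as non-degeneracy of the $L^2$ pairing; also, be careful that the vector fields $X$ appearing in the decomposition must be tangent to $\de S$ (not just to $S$), which is what makes $du(X)\in T_u\N$ and hence the boundary term vanish.
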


We remark that the harmonic extension $\tilde u_p\in H^1(S)$ of $u\in H^{1/2}(\de S)$ with respect to $\phi(p)$ depends on the couple $(u,p)\in H^{1/2}(\de S,\R^m)\times U$ in a smooth fashion:
this follows from the inverse function theorem applied to the map
\[ H^1(S)\times U\to H^{-1}(S)\times H^{1/2}(\de S)\times U,\quad (v,p)\mapsto (-\Delta_{\phi(p)}v,v\big|_{\de S},p). \]
In particular, the function $(u,p)\mapsto E(u,\phi(p))$ is smooth as well.

\begin{proof}
	$(\Rightarrow)$ Replacing $g$ with $\upsilon(g)$ and $\phi$ with $\upsilon\circ\phi$, we can assume that $g\in\mathcal{M}_{-1}(S)$ and $\phi(U)\subseteq\mathcal{M}_{-1}(S)$: indeed,
	thanks to the conformal invariance of the Dirichlet energy, $E(v,g')=E(v,\upsilon(g'))$ for any $v\in H^{1/2}(\de S,\R^m)$ and any metric $g'$, so $u$ is still $\mz$-harmonic with respect to $\upsilon(g)$ and $\pi(\upsilon(g))=\pi(g)$ is still critical for $p\mapsto E(u,\upsilon\circ\phi(p))$.
	
	The Hopf differential $h$ of the map $\tilde{u}$, defined in any local conformal chart (for $g$) by the formula
	\[ h(z):=H(z)\,dz\otimes dz,\quad H(z):=\frac{\de\tu}{\de z}\cdot\frac{\de\tu}{\de z}=\frac{1}{4}\pa{\abs{\frac{\de\tu}{\de x}}^2-\abs{\frac{\de\tu}{\de y}}^2-2i\frac{\de\tu}{\de x}\cdot\frac{\de\tu}{\de y}}, \]
	is a globally defined holomorphic quadratic differential (i.e. $H$ is holomorphic in any conformal chart), as a consequence of the fact that $\Delta_g\tilde{u}=0$. The conformality of $\tilde{u}$ is equivalent to $h\equiv 0$.
	
	Moreover, $h$ is real at the boundary $\de S$, meaning that in any local conformal chart $(V,z)$ mapping $V\cap\de S$ into the real line $\set{\Re(z)=0}$
	we have
	\[ \frac{\de\tilde{u}}{\de x}\cdot\frac{\de\tilde{u}}{\de y}=0 \]
	on the real line. Indeed, at such points $\frac{\de\tilde{u}}{\de x}\in T_{u(z)}\mathcal{N}$, while $\frac{\de\tilde{u}}{\de y}\perp T_{u(z)}\mathcal{N}$ by $\mz$-harmonicity (observe that by the preceding regularity result we have $\tilde{u}\in C^\infty$ up to the boundary).
	
	Let now $v:=d\pi_{g}[\Re(h)]$. Since $g\in\mathcal{M}_{-1}(S)$, the symmetric tensor $d\phi_{\pi(g)}[v]$ can be decomposed as
	\begin{equation}\label{eq:hodgedec} d\phi_{\pi(g)}[v]=\Re(q)+\mathcal{L}_X g, \end{equation}
	where $q$ is a holomorphic quadratic differential which is real at $\de S$, while $\mathcal{L}_X g$ is the Lie derivative of $g$ with respect to a vector field $X$ satisfying $\restr{X}{\de S}\parallel\de S$ (see \cite[Theorem~8.2]{FT84} for the corresponding statement for closed surfaces). The tensor $\mathcal{L}_X g$ belongs to the kernel of $d\pi_g$, as $X$ generates a one-parameter subgroup of $\mathcal{D}_0$. Thus, using $\pi\circ\phi=\text{id}_U$,
	\[ d\pi_g[\Re(h)]=v=d\pi_g[d\phi_{\pi(g)}[v]]=d\pi_g[\Re(q)+\mathcal{L}_X g]=d\pi_g[\Re(q)]. \]
	But $\tch(S)$ is built precisely in such a way that the map $d\pi_g$ restricts to a bijection from the space of such real quadratic differentials to $T_{\pi(g)}\tch(S)$. We deduce that $\Re(h)=\Re(q)$.
	
	We also remark that $dE(u,\cdot)_g[\mathcal{L}_X g]=0$:
	indeed, calling $\Phi_t^X$ the flow generated by $X$, we clearly have
	\[ E(u,(\Phi_t^X)^*g)=E(u\circ\Phi_{-t}^X,g) \]
	and differentiation at $t=0$ gives
	\[ dE(u,\cdot)_g[\mathcal{L}_X g]=-2\int_{\de S}\frac{\de\tilde{u}}{\de\nu}\cdot du[X]\,d\text{vol}_g=0, \]
	by characterization \eqref{eq:hhfchar} of $\mz$-harmonicity.
	From \eqref{eq:hodgedec} we finally deduce that
	\begin{align*} 0=&dE(u,\phi(\cdot))[v]=dE(u,\cdot)[\Re(q)+\mathcal{L}_X g]=dE(u,\cdot)[\Re(h)] \\
	=&-\int_S\Re(h)[\nabla\tilde u;\nabla\tilde u]\,\dvg+\mz\int_S\abs{\nabla\tilde u}_g^2\text{tr}_g(\Re(h))\,\dvg \end{align*}
	(using the fact that the variation of $\tilde{u}$ gives no contribution, thanks to harmonicity). But, as is readily seen in conformal coordinates,
	\[ \text{tr}_g(\Re(h))=0,\quad \Re(h)[\nabla\tilde u,\nabla\tilde u]
	=2\abs{\Re(h)}_g^2. \]
	We infer that $\Re(h)=0$, which implies $h=0$.
	
	$(\Leftarrow)$ Conversely, for any $v\in T_{\pi(g)}U$, we can write
	\[ d\phi_{\pi(g)}[v]=\Re(q)+\mathcal{L}_X g \]
	for suitable $q$ and $X$ as before. We have
	\begin{align*} &dE(u,\phi(\cdot))[v]=dE(u,\cdot)[\Re(q)+\mathcal{L}_X g]=dE(u,\cdot)[\Re(q)] \\
	&=-\int_S\Re(q)[\nabla\tu;\nabla\tu]\,\dvg+\mz\int_S\abs{\nabla\tu}_g^2\text{tr}_g(\Re(q))\,\dvg. \end{align*}
	Again we have $\text{tr}_g(\Re(q))=0$, while the conformality of $\tu$ gives $\frac{\de\tu}{\de z}\cdot\frac{\de\tu}{\de z}=0$ in conformal coordinates, hence $\Re(q)[\nabla\tu;\nabla\tu]=0$ as well.
\end{proof}

\begin{proof}[Proof of Corollary \ref{cvx}]
	In view of the preceding results, it suffices to show that, for a nontrivial $\mz$-harmonic map $u$ with conformal $\tilde u$, we have $\tilde u(\interna{S})\subseteq\Omega$ and $d\tilde u\neq 0$ at the boundary $\de S$. Recall that $\tilde u$ is $C^\infty$-smooth up to the boundary of $S$.
	
	Since $u$ is nontrivial we have $d\tilde u\neq 0$ at some $x'\in\de S$. Combining this with the conformality of $\tilde u$ and the condition $P^T(u)\frac{\de\tilde u}{\de\nu}=0$, we get $\tilde u(x'')\in\Omega$ for at least an $x''\in\interna{S}$.
	
	Fix now any point $p\nin\Omega$. By convexity of $\Omega$, there exists an affine map $F:\env\to\R$ such that $F(p)\le0$ and $F(\Omega)\subseteq(0,\infty)$.
	Since $F\circ u$ takes nonnegative values (as $u$ takes values in $\de\Omega$) and $F\circ\tilde u(x'')>0$, by the strong maximum principle we get $F\circ\tilde u>0$ on $\interna{S}$. Hence, $\tilde u(\interna{S})\subseteq\Omega$.
	
	Finally, if $x\in\de S$ we can let $p:=u(x)$: then Hopf's lemma gives $\frac{\de(F\circ\tilde u)}{\de\nu}(x)<0$. In particular, $d\tilde u$ never vanishes at $\de S$.
\end{proof}

\appendix

\section{Uniformization theorem for annuli with boundary}\label{unifannsec}

\begin{thm}\label{unifann} Let $(A,g)$ be a compact Riemannian two-dimensional manifold with boundary, diffeomorphic to $[0,1]\times {\mathcal{S}}^1$.
Then there exists some $t>1$ such that $(A,g)$ is conformally equivalent to the standard annulus $A_t:=\obar{B}_t\setminus B_1\subset\C$.
\end{thm}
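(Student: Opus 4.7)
The plan is to apply the classical uniformization construction via a harmonic function with distinct constant boundary values. Write $\partial A=\gamma_0\sqcup\gamma_1$ and let $u\in C^\infty(A)$ solve $\Delta_g u=0$ with $u\equiv 0$ on $\gamma_0$ and $u\equiv 1$ on $\gamma_1$; smoothness up to the boundary is standard elliptic regularity. The central technical step is to show $du\neq 0$ everywhere on $\bar A$. On $\partial A$ this is Hopf's lemma applied to $u$ and to $1-u$. For the interior, I would use the Schottky double: in any local isothermal chart, $\partial u=u_z\,dz$ is a holomorphic differential on $A$, and since $u$ is constant along $\partial A$ we have $u_z$ purely imaginary on the boundary, so by Schwarz reflection $\partial u$ extends to a holomorphic $1$-form $\omega$ on the Schottky double $DA$ of $A$. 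Since $\chi(A)=0$, the surface $DA$ is a torus, and any nonzero holomorphic differential on a torus is nowhere-vanishing, so $\omega$ (and hence $du$) has no zeros on $\bar A$.

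Given $du\neq 0$, the closed $1$-form $\ast du$ admits a single period
\[ \tau:=\int_\sigma \ast du>0, \]
independent of the loop $\sigma$ generating $\pi_1(A)\cong\Z$ (the sign follows since $\ast du=(\partial u/\partial\nu)\,ds$ along $\gamma_1$). On the universal cover $\widetilde A$ one obtains a single-valued primitive $v$ of the lift of $\ast du$, and the Cauchy--Riemann equations for $u+iv$ in any isothermal chart imply that the quotient map
\[ \Phi\colon A\to[0,1]\times\R/\tau\Z,\qquad \Phi(x):=(u(x),\,v(x)\bmod\tau), \]
is a local conformal diffeomorphism.

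To conclude, I would upgrade $\Phi$ to a global diffeomorphism. As a proper local diffeomorphism $\Phi$ is a finite covering onto its image, which is open, closed, and nonempty, hence all of the connected target. The induced map $\pi_1(A)\to\pi_1([0,1]\times\R/\tau\Z)$ sends the generator $[\sigma]$ to a generator by the very definition of $\tau$, so $\Phi$ has degree one and is a diffeomorphism. Composing with the standard biholomorphism
\[ [0,1]\times\R/\tau\Z\to A_t,\qquad (x,y)\mapsto\exp\!\Bigl(\tfrac{2\pi}{\tau}(x+iy)\Bigr),\qquad t:=e^{2\pi/\tau}>1, \]
yields the desired conformal equivalence $(A,g)\cong A_t$. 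The main obstacle is the interior non-vanishing of $du$; I would favor the Schottky-double argument over an index/Morse-theoretic one because it bypasses both the issue of possibly degenerate critical points of $u$ and the boundary contributions in Poincar\'e--Hopf.
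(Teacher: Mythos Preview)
Your proof is correct and closely parallels the paper's: both construct the same conformal map $\exp\bigl(\tfrac{2\pi}{\tau}(u+iv)\bigr)$ from the harmonic function $u$ with constant boundary values and its conjugate $v$. The genuine divergence is in how the interior critical points of $u$ are ruled out. The paper does \emph{not} prove $du\neq 0$ in advance: it observes that the holomorphic map $\psi$ can have only finitely many critical points (all interior, by Hopf's lemma), shows $\psi$ restricts to a covering away from them, and then uses injectivity of $\psi$ on $\partial A$ (from the strict monotonicity of $v$ along each boundary circle) to force the covering degree to be one; the critical set is thereby empty a posteriori. Your Schottky-double argument instead settles $du\neq 0$ a priori by extending $\partial u$ (equivalently $i\,\partial u$, which is real on $\partial A$) to a nonzero holomorphic $1$-form on the doubled torus, where such forms are nowhere vanishing. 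Your route is conceptually cleaner once the double is in hand and makes the subsequent covering argument immediate, but it imports the Schottky-double construction and the fact that the canonical bundle of a torus is trivial; the paper's route is more self-contained at the price of a slightly more delicate covering step.
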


\begin{proof}We fix a diffeomorphism $\phi:[0,1]\times {\mathcal{S}}^1\to A$ and we orient $A$ by declaring that $\phi$ is orientation-preserving.
We call $\gamma_j:{\mathcal{S}}^1\to\de A$ the restrictions $\gamma_j:=\phi(j,\cdot)$, for $j=0,1$, so that $\gamma_1$ preserves the orientation while $\gamma_0$ reverses it.
Let $u\in C^\infty(A)$ be the unique harmonic function which equals $j$ on $\gamma_j({\mathcal{S}}^1)$ (for $j=0,1$). Denoting by $\iA$ the interior of $A$,
we remark that by the maximum principle $0<u<1$ on $\iA$ and by Hopf's lemma $*du[\dot\gamma_j]>0$ for $j=0,1$. Recall that, in local conformal coordinates $(x,y)$,
$*du=-\frac{du}{dy}dx+\frac{du}{dx}dy$.

Let $\kappa:=\int_{\gamma_0}*du>0$. Since $*du$ is closed, $\int_\gamma *du\in\kappa\Z$ for any closed, piecewise smooth curve $\gamma$ taking values in $A$.
Thus, we can define $v\in C^\infty(A,\R/\kappa\Z)$ by the formula $v(p):=\int_\alpha *du$, where $\alpha$ is any piecewise smooth curve joining $\gamma_0(0)$ to $p$.
Now the map
\[ \psi:A\to\C, \quad \psi:=\exp\pa{\frac{2\pi}{\kappa}(u+iv)} \]
is well-defined and smooth.

The metric $g$, together with the orientation, induces a complex structure on $A$.
As $v$ locally lifts to a primitive of $*du$, we have $dv=*du$. Hence, in local conformal coordinates, the map $u+iv:A\to\C/i\kappa\Z$ satisfies the Cauchy--Riemann equations and is thus holomorphic; so $\psi$ is holomorphic as well.
We now prove that $\psi$ is a diffeomorphism onto its image. Since $*du[\dot\gamma_i]>0$, the compact set $F:=\set{p\in A:d\psi(p)=0}$ is contained in $\iA$.
As $\psi$ is holomorphic, $F$ is finite. We have $F':=\psi^{-1}(\psi(F))\subseteq\iA$ (as $\psi(\de A)\cap\psi(\iA)=\emptyset$), so by holomorphicity $F'$ is finite as well.

It is easy to check that $\restr{\psi}{A\setminus F'}:A\setminus F'\to\psi(A)\setminus\psi(F)$ is a covering (indeed, any $z\in\psi(A)\setminus\psi(F)$ has finitely many preimages $p_1,\dots,p_k\in A\setminus F'$; we can find open disjoint neighborhoods $U_j\subseteq A\setminus F'$ of $p_j$ which are all mapped diffeomorphically onto some neighborhood $V$ of $z$; up to replacing $V$ with $V':=V\setminus\psi(A\setminus\sqcup_j U_j)$ and shrinking each $U_j$ accordingly, $V$ is evenly covered by $\sqcup_j U_j$).
But $\psi$ is injective on $\de A\subseteq A\setminus F'$, so $\restr{\psi}{A\setminus F'}$ is injective and hence a diffeomorphism onto its image.

As $\psi$ is holomorphic, $\psi$ cannot be injective in any punctured neighborhood of any point in $F$. It follows that $F=\emptyset$, thus also $F'=\emptyset$. Finally, calling $t:=\frac{2\pi}{\kappa}$, we have $\psi(A)\subseteq A_t$ and $\psi(\de A)=\de A_t$. As $\psi(\iA)=\psi(A)\cap\iA_t$, $\psi(\iA)$ is both open and closed in $\iA_t$, so by connectedness it follows that $\psi:A\to A_t$ is surjective. The map $\psi$ provides the desired conformal equivalence.
\end{proof}

\section{Functional spaces}\label{functapp}

In this section we recall the definition of the functional spaces used in the paper, as well as the main operations on them and some key facts concerning the so-called Littlewood--Paley  dyadic  decomposition.

We denote respectively by ${\mathcal{S}}(\R)$ and  ${\mathcal{S}}^{\prime}(\R)$ the spaces of (real or complex) Schwartz functions and tempered distributions.
All the functional spaces used in this paper should be understood as subspaces of $\mathcal{S}'(\R)$.
Given a function $\varphi\in\mathcal{S}(\R)$, we denote either by
$\hat\varphi$ or by ${\mathcal{F}}\varphi$ the Fourier transform of $\varphi$, i.e.
\[ \hat\varphi(\xi)={\mathcal{F}}\varphi(\xi)=\int_{\R}v(x)e^{-2\pi i\xi x}\,dx, \]
while if $v\in\mathcal{S}'(\R)$ we define $\hat v=\mathcal{F}v\in\mathcal{S}'(\R)$ by $\ang{\hat v,\varphi}:=\ang{v,\hat\varphi}$.

We recall the definition of the inhomogeneous fractional Sobolev (Bessel potential) spaces:
for a real $s$ and $1<p<\infty$ we let
\[ H^{s,p}(\R):=\set{v\in\mathcal{S}'(\R):\norm{v}_{H^{s,p}}:=\bnorm{{\mathcal{F}}^{-1}[(1+4\pi^2|\xi|^2)^{s/2}{\mathcal{F}}v]}_{L^p}<\infty}. \]
Observe that $H^{s,p}(\R)$ is stable under multiplication by Schwartz functions, i.e. $\psi v\in H^{s,p}(\R)$ if $\psi\in\mathcal{S}(\R)$ and $v\in H^{s,p}(\R)$: assuming $v\in\mathcal{S}(\R)$, $w:={\mathcal{F}}^{-1}[(1+|\xi|^2)^{s/2}{\mathcal{F}}v]\in L^p(\R)$ satisfies
\[ {\mathcal{F}}^{-1}[(1+4\pi^2|\xi|^2)^{s/2}{\mathcal{F}}(\psi v)]=\int\mathcal{F}^{-1}[m_\eta \mathcal{F}(e^{2\pi i\eta x}w)]\,d\eta,\quad m_\eta(\xi):=\pa{\frac{1+4\pi^2\abs{\xi}^2}{1+4\pi^2\abs{\xi-\eta}^2}}^{s/2}\hat\psi(\eta) \] and we get $\norm{\psi v}_{H^{s,p}}\ls\norm{w}_{L^p}=\norm{v}_{H^{s,p}}$ from \cite[Theorem~6.2.7]{Gra14C} and the decay of $\hat\psi$.


We also recall the definition of the homogeneous fractional Sobolev spaces used in the paper, namely $\hmz(\R)$ and $\dot H^{-1/2}(\R)$:
\[ \dot H^{1/2}(\R):=\bigg\{v\in L^2_{loc}(\R):\norm{v}_\hmz^2:=\iint\frac{\abs{v(x)-v(y)}^2}{\abs{x-y}^2}\,dx\,dy<\infty\bigg\}, \]
\[ \dot H^{-1/2}(\R):=\set{v\in\mathcal{S}'(R):\hat v\in L^2_{loc}(\R)\text{ and }\int\abs{\xi}^{-1}|\hat v(\xi)|^2<\infty}. \]
We remark that $\dot H^{1/2}(\R)$ is naturally a subspace of $\mathcal{S}'(\R)$, although $\norm{\cdot}_\hmz$ is only a seminorm (which vanishes on constant functions). Given $v\in\hmz(\R)$, we always have $\hat v\in L^2_{loc}(\R\setminus\set{0})$ and
\begin{equation}\label{hmzfourier} \int_{\R\setminus\set{0}}\abs{\xi}|\widehat v(\xi)|^2\,d\xi=c\norm{v}_\hmz^2 \end{equation}
for some constant $c>0$ (see e.g. the proof of \cite[Proposition~3.4]{DNPV12}).

\begin{lemmaen}\label{hmzannest}
	Given $v\in\dot H^{1/2}(\R)$, for any $j\ge 0$ it holds
	\[ \norm{v}_{L^2(B(0,2^{j}))}\ls\abs{(v)_{B(0,1)}}+(j+1)2^{j/2}\norm{v}_{\dot H^{1/2}}. \]
\end{lemmaen}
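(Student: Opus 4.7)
The plan is to reduce the estimate to two ingredients: a one-dimensional fractional Poincar\'e inequality on a ball, and a dyadic telescoping argument for the averages $(v)_{B(0,2^k)}$, $k=0,\dots,j$.

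First, I would establish the Poincar\'e-type inequality $\norm{v-(v)_B}_{L^2(B)}\le r^{1/2}\norm{v}_{\hmz}$ for every $B=B(0,r)\subset\R$. This follows from the identity $\norm{v-(v)_B}_{L^2(B)}^2=(2|B|)^{-1}\iint_{B\times B}|v(x)-v(y)|^2\,dx\,dy$ combined with the trivial lower bound $|x-y|^{-2}\ge(2r)^{-2}$ on $B\times B$, which together give
\[ \norm{v-(v)_B}_{L^2(B)}^2\le r\iint_{B\times B}\frac{|v(x)-v(y)|^2}{|x-y|^2}\,dx\,dy\le r\norm{v}_{\hmz}^2. \]

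Second, writing $B_k:=B(0,2^k)$, I would estimate the successive differences of averages by
\[ |(v)_{B_{k+1}}-(v)_{B_k}|\le\frac{1}{|B_k|}\int_{B_k}|v-(v)_{B_{k+1}}|\le|B_k|^{-1/2}\norm{v-(v)_{B_{k+1}}}_{L^2(B_{k+1})}\ls\norm{v}_{\hmz}, \]
where the last step uses the Poincar\'e inequality at scale $2^{k+1}$ (the exponents of $2$ cancel exactly, reflecting the scale-invariance of $\hmz$ in one dimension). Telescoping over $k=0,\dots,j-1$ yields $|(v)_{B_j}|\le|(v)_{B(0,1)}|+j\norm{v}_{\hmz}$. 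Combining this with the triangle inequality $\norm{v}_{L^2(B_j)}\le\norm{v-(v)_{B_j}}_{L^2(B_j)}+|(v)_{B_j}|\cdot|B_j|^{1/2}$, and controlling the first summand by Poincar\'e (giving $\ls 2^{j/2}\norm{v}_{\hmz}$) and the second by the telescoped bound (together with $|B_j|^{1/2}\sim 2^{j/2}$), one assembles the desired estimate.

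There is no substantial obstacle to this strategy. The crucial point is that in dimension one the seminorm $\norm{\cdot}_{\hmz}$ is \emph{exactly} scale-invariant, so each dyadic doubling contributes a uniformly bounded amount to the difference of averages; this is precisely what produces the linear-in-$j$ factor rather than a worse, possibly exponential, dependence.
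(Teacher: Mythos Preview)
Your argument is correct and follows essentially the same route as the paper: a fractional Poincar\'e inequality on balls, a telescoping bound on the differences of dyadic averages, and the triangle inequality to combine them. The paper's proof differs only cosmetically (it bounds $\norm{v-(v)_B}_{L^2(B)}^2$ by $|B|^{-1}\iint_{B\times B}|v(x)-v(y)|^2$ via Jensen rather than the exact identity you use, and bounds the average differences via the $L^1$ norm rather than Cauchy--Schwarz).

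One remark worth making: both your proof and the paper's actually yield
\[
\norm{v}_{L^2(B(0,2^{j}))}\ls 2^{j/2}\abs{(v)_{B(0,1)}}+(j+1)2^{j/2}\norm{v}_{\dot H^{1/2}},
\]
with a factor $2^{j/2}$ in front of $\abs{(v)_{B(0,1)}}$ that is missing in the statement as written. The inequality without this factor is false (take $v$ a nonzero constant). The paper's final displayed inequality in its proof has the same omission; it is a harmless typo, and all later applications of the lemma either have $(v)_{B(0,1)}=0$ or are insensitive to this extra factor.
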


\begin{proof}
	We notice that, for $j\ge 0$,
	\[ \norm{v-(v)_{B(0,2^j)}}_{L^2(B(0,2^j))}^2\ls 2^{-j}\iint_{B(0,2^j)^2}\abs{v(x)-v(y)}^2\ls 2^j\iint_{B(0,2^j)^2}\frac{\abs{v(x)-v(y)}^2}{\abs{x-y}^2}, \]
	so $\norm{v-(v)_{B(0,2^j)}}_{L^2(B(0,2^j))}\ls 2^{j/2}\norm{v}_\hmz$. Similarly,
	\[ \abs{(v)_{B(0,2^{j-1})}-(v)_{B(0,2^j)}}\ls 2^{-j}\int_{B(0,2^j)}\abs{v-(v)_{B(0,2^j)}}\ls\norm{v}_\hmz \]
	for $j\ge 1$. The desired inequality follows from these estimates and
	\[ \norm{v}_{L^2(B(0,2^j))}\le\norm{v-(v)_{B(0,2^j)}}_{L^2(B(0,2^j))}+2^{j/2}\sum_{\ell=1}^j\abs{(v)_{B(0,2^{\ell-1})}-(v)_{B(0,2^\ell)}}+\abs{(v)_{B(0,1)}}. \qedhere \]
\end{proof}

\begin{lemmaen}\label{hmzapprox}
	Given $v\in\hmz(\R)$, there exists a sequence $v_k\in\mathcal{S}(\R)$, with $\widehat{v}_k\in C^\infty_c(\R\setminus\set{0})$, and a sequence $c_k\in\R$ such that
	\[ \norm{v-(v_k+c_k)}_\hmz=\norm{v-v_k}_{\hmz}\to 0,\qquad v_k+c_k\weakstarto v\quad\text{in }\mathcal{S}'(\R), \]
	\[ \norm{v-(v_k+c_k)}_{L^2(B(0,2^j))}\ls (j+1)\norm{v-(v_k+c_k)}_\hmz. \]
\end{lemmaen}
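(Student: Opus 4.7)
The plan is to approximate $v$ through its Fourier transform. By \eqref{hmzfourier}, the restriction $\hat v|_{\R\setminus\{0\}}$ is locally $L^2$ and lies in $L^2(|\xi|\,d\xi)$, and this is the quantity to approximate. Fix $\chi\in C^\infty_c(\R)$ with $\chi\equiv 1$ near $0$ and set $\eta_k(\xi):=\chi(\xi/k)\bigl(1-\chi(k\xi)\bigr)$, so that $\eta_k\in C^\infty_c(\R\setminus\{0\})$, $\eta_k\to 1$ pointwise away from the origin, and $0\le\eta_k\le 1$. Then $\eta_k\hat v\in L^2(\R)$ with compact support away from $0$; to make it smooth, I convolve with a mollifier $\rho_{\epsilon_k}$, with $\epsilon_k$ small enough to preserve the support condition, and put
\[ \hat v_k:=\rho_{\epsilon_k}*(\eta_k\hat v)\in C^\infty_c(\R\setminus\{0\}),\qquad v_k:=\mathcal{F}^{-1}\hat v_k\in\mathcal{S}(\R). \]

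For the convergence in \hmz, equation \eqref{hmzfourier} gives $\|v-v_k\|_\hmz^2\simeq\int_{\R\setminus\{0\}}|\xi|\,|\hat v-\hat v_k|^2\,d\xi$. Dominated convergence yields $\eta_k\hat v\to\hat v$ in $L^2(|\xi|\,d\xi)$, while for each fixed $k$ the mollification $\rho_\epsilon*(\eta_k\hat v)\to\eta_k\hat v$ in $L^2$, hence also in $L^2(|\xi|\,d\xi)$ (since $|\xi|$ is bounded on the joint support). A diagonal choice $\epsilon_k\to 0$ then produces $\|v-v_k\|_\hmz\to 0$. Because the \hmz-seminorm annihilates constants, this automatically equals $\|v-(v_k+c_k)\|_\hmz$ for every $c_k\in\R$.

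Next I fix $c_k:=(v-v_k)_{B(0,1)}$, so that $w_k:=v-v_k-c_k$ has vanishing average on $B(0,1)$. Applying Lemma \ref{hmzannest} to $w_k$ yields the claimed $L^2$-estimate on $B(0,2^j)$, since the $|(w_k)_{B(0,1)}|$-term drops out by the choice of $c_k$. For the weak-$*$ convergence $v_k+c_k\weakstarto v$ in $\mathcal{S}'(\R)$, given $\varphi\in\mathcal{S}(\R)$ I split $\int w_k\varphi$ over dyadic annuli $B(0,2^j)\setminus B(0,2^{j-1})$: the tail beyond $|x|>2^J$ is controlled uniformly in $k$ by combining the ball estimate with the rapid decay of $\varphi$, while the finitely many head terms vanish as $k\to\infty$ by the $L^2$-convergence on each fixed ball.

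No individual step is deep; the only bookkeeping point is that $\hat v_k$ must be supported away from $\xi=0$, so that $v_k\in\mathcal{S}(\R)$ has zero integral and the mass difference between $v$ and $v_k$ can be absorbed into the scalar $c_k$, which is then adjusted to exactly kill the $|(w_k)_{B(0,1)}|$-term in Lemma \ref{hmzannest}.
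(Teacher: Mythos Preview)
Your proof is correct and follows essentially the same approach as the paper: truncate $\hat v$ away from $0$ and $\infty$ by a smooth cutoff, then smooth to obtain $\hat v_k\in C^\infty_c(\R\setminus\{0\})$, choose the constant $c_k$ so that $v-(v_k+c_k)$ has zero mean on $B(0,1)$, and invoke Lemma~\ref{hmzannest}. Your write-up is in fact slightly more explicit than the paper's (you spell out the mollification step and the weak-$*$ convergence argument, whereas the paper just asserts the existence of $v_k$ close to $\widecheck w_k$ and calls the weak-$*$ convergence an immediate consequence of the $L^2$-estimate).
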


\begin{proof}
	Fix $\chi\in C^\infty_c(\R)$ with $\uno_{B(0,1/2)}\le\chi\le\uno_{B(0,1)}$. As observed above, the function
	\[ w_k:=(\chi(2^{-k}\cdot)-\chi(2^k\cdot))\widehat{v} \]
	belongs to $L^2(\R)$ and
	has $\int\abs{\xi}\abs{w_k(\xi)}^2\,d\xi<\infty$. We can find $v_k\in\mathcal{S}(\R)$ with $\widehat{v}_k\in C^\infty_c(\R\setminus\set{0})$ and $\int\abs{\xi}\abs{w_k-\widehat{v}_k}^2(\xi)\,d\xi\le 2^{-k}$. Since
	$\int_{\R\setminus\set{0}}\abs{\xi}\abs{\widehat{v}-w_k}^2(\xi)\,d\xi\to 0$, we get
	\[ \norm{v-v_k}_\hmz^2\ls\int_{\R\setminus\set{0}}\abs{\xi}\abs{\widehat{v}-\widehat{v}_k}^2(\xi)\,d\xi\to 0. \]
	We now choose $c_k$ in such a way that $(v_k+c_k)_{B(0,1)}=(v)_{B(0,1)}$. The last part of the thesis follows from Lemma \ref{hmzannest} and the convergence $v_k+c_k\weakstarto v$ is an immediate consequence.
\end{proof}

\begin{rmk}\label{linftyapprox}
	If $v$ lies also in $L^\infty(\R)$, we can also ensure that $\norm{v_k}_{L^\infty},\abs{c_k}\ls\norm{v}_{L^\infty}$. Indeed, $\mathcal{F}^{-1}(\chi(2^{-k}\cdot)-\chi(2^k\cdot))$ is bounded in $L^1(\R)$, so $\norm{\widecheck w_k}_{L^\infty}\ls\norm{v}_{L^\infty}$; moreover, $v_k$ can be chosen arbitrarily close to $\widecheck w_k$ in $L^\infty(\R)$.
\end{rmk}

We also define the Hardy space $\mathcal{H}^1(\R)$ as
\[ \mathcal{H}^1(\R):=\set{v\in L^1(\R):\sup_{t>0}\abs{\varphi_t*v}(x)\in L^1(\R)}, \]
where $\varphi\in\mathcal{S}(\R)$ is an arbitrary function such that $\int\varphi\neq 0$ and $\varphi_t(y):=t^{-1}\varphi(t^{-1}y)$. This definition does not depend on the choice of $\varphi$: for this and many useful characterizations of $\mathcal{H}^1(\R)$, we refer the reader to \cite[Chapter~2]{Gra14M} and \cite[Chapter~III]{Ste93}.

Finally we define the Lorentz spaces $L^{2,1}(\R)$ and $L^{2,\infty}(\R)$:
\[ L^{2,1}(\R):=\set{v\in L^1_{loc}(\R):\int_0^\infty\mathcal{L}^1(\set{\abs{f}>t})^{1/2}\,dt<\infty}, \]
\[ L^{2,\infty}(\R):=\set{v\in L^1_{loc}:\sup_{t>0}t\mathcal{L}^1(\set{\abs{f}>t})^{1/2}<\infty}. \]
These are Banach spaces with the norms
\[ \norm{v}_{L^{2,1}}:=\int_0^\infty t^{-1/2}\pa{\sup_{t\le\mathcal{L}^1(E)<\infty}\media_E\abs{v}}\,dt,\quad\norm{v}_{L^{2,\infty}}:=\sup_{0<\mathcal{L}^1(E)<\infty}\mathcal{L}^1(E)^{-1/2}\int_E\abs{v} \]
and $L^{2,\infty}(\R)$ is the dual of $L^{2,1}(\R)$: see e.g. \cite[Section~1.4]{Gra14C}.

\subsection{Products, fractional Laplacian and Hilbert--Riesz transform}

We fix a nonnegative bump function $\rho\in C^\infty_c(\R)$ with $\int\rho=1$. Given $v,w\in\mathcal{S}'(\R)$, we define their product
\[ vw:=\lim_{\epsilon\to 0}(\rho_\epsilon*v)(\rho_\epsilon*w) \]
as a limit in $\mathcal{S}'(\R)$, provided that it exists. Notice that $(\rho_\epsilon*v)(\rho_\epsilon*w)\in C^\infty\cap\mathcal{S}'(\R)$. In general, this limit could fail to exist or could depend on $\rho$. In all the instances appearing in this paper, we are implicitly claiming that the product is defined and is independent of $\rho$.

From the definition of $\hmz(\R)$ it easily follows that $\hmzb(\R)$ is an algebra, i.e. it is closed under the product: more precisely,
\[ \norm{vw}_\hmz\ls\norm{v}_\hmz\norm{w}_{L^\infty}+\norm{v}_{L^\infty}\norm{w}_\hmz,\quad\norm{vw}_{L^\infty}\le\norm{v}_{L^\infty}\norm{w}_{L^\infty} \]
whenever $v,w\in\hmzb(\R)$. Using this and the obvious inclusion $\mathcal{S}(\R)\subseteq\hmzb(\R)$, as well as \eqref{hmzfourier}, one checks that the product $vw$ can always be formed when $v\in\dot H^{-1/2}(\R)$ and $w\in\hmzb(\R)$.

Moreover, for any real $s$, we define the fractional Laplacian $(-\Delta)^{s/2}$ as
\[ (-\Delta)^{s/2}v:=\lim_{\epsilon\to 0}\mathcal{F}^{-1}[(\epsilon^2+4\pi^2\abs{\xi}^2)^{s/2}\mathcal{F}v], \]
provided that the limit exist in $\mathcal{S}'(\R)$; in other words, we approximate the fractional Laplacian by means of Bessel potentials. We recall some properties of the fractional Laplacian for the values of $s$ mostly used in the paper, namely $s=\pm\frac{1}{4}$.

Clearly, $\ql$ maps $L^2(\R)$ isomorphically onto $\hmmz(\R)$, with inverse $\mql$. The following statement is less obvious.

\begin{lemmaen}\label{hmztol2}
	If $v\in\hmz(\R)$, then $\ql v$ exists, lies in $L^2(\R)$ and is given by
	\[ \ql v=\mathcal{F}^{-1}\pa{(2\pi\abs{\xi})^{1/2}\widehat{v}}, \]
	where $(2\pi\abs{\xi})^{1/2}\widehat{v}$ means the $L^2$ function agreeing with the distribution on $\R\setminus\set{0}$.
\end{lemmaen}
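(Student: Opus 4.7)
The plan is to (i) define the candidate $w\in L^2(\R)$ via the Fourier-side formula, (ii) compute the Bessel regularization on the Schwartz approximants provided by Lemma \ref{hmzapprox} and check that they converge to $w$ in $L^2$, and (iii) pass to the limit in the Bessel regularization of $v$.

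First, by identity \eqref{hmzfourier} the function $g(\xi):=(2\pi|\xi|)^{1/2}\hat v(\xi)$---interpreting $\hat v$ as its $L^2_{loc}(\R\setminus\{0\})$-representative and extending $g$ by zero at the origin---satisfies $g\in L^2(\R)$ with $\|g\|_{L^2}^2=c\|v\|_\hmz^2$, so I set $w:=\mathcal{F}^{-1}g\in L^2(\R)$. Next I would apply Lemma \ref{hmzapprox} to produce $v_k\in\mathcal{S}(\R)$ with $\hat v_k\in C^\infty_c(\R\setminus\{0\})$ and $c_k\in\R$ such that $u_k:=v_k+c_k$ satisfies $u_k\to v$ in $\mathcal{S}'(\R)$, $\|v-u_k\|_\hmz\to 0$, and $(u_k)_{B(0,1)}=(v)_{B(0,1)}$. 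For each $u_k$, writing $\hat u_k=\hat v_k+c_k\delta_0$, the Bessel regularization decomposes as $(\epsilon^2+4\pi^2|\xi|^2)^{1/4}\hat v_k+c_k\epsilon^{1/2}\delta_0$: the first addendum converges uniformly to $(2\pi|\xi|)^{1/2}\hat v_k$ on the compact support of $\hat v_k$ (bounded away from $0$), while $c_k\epsilon^{1/2}\delta_0\to 0$ in $\mathcal{S}'$. Hence $\ql u_k$ exists and equals $w_k:=\mathcal{F}^{-1}[(2\pi|\xi|)^{1/2}\hat v_k]\in L^2$, and Plancherel combined with \eqref{hmzfourier} gives
\[\|w-w_k\|_{L^2}^2=\int_{\R\setminus\{0\}}2\pi|\xi|\,|\hat v-\hat v_k|^2\,d\xi=c\|v-u_k\|_\hmz^2\to 0.\]

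To conclude I would show $T_\epsilon v:=\mathcal{F}^{-1}[(\epsilon^2+4\pi^2|\xi|^2)^{1/4}\hat v]\to w$ in $\mathcal{S}'$ by writing $T_\epsilon v-w=T_\epsilon(v-u_k)+(T_\epsilon u_k-w_k)+(w_k-w)$ and letting first $k\to\infty$ and then $\epsilon\to 0$, for a fixed test $\varphi\in\mathcal{S}$. The third and second terms vanish by the preceding step; for the first, self-adjointness of the Bessel multiplier yields $\langle T_\epsilon h,\varphi\rangle=\langle\hat h,(\epsilon^2+4\pi^2|\xi|^2)^{1/4}\mathcal{F}^{-1}\varphi\rangle$ with $h:=v-u_k$, and the pointwise inequality $(\epsilon^2+4\pi^2|\xi|^2)^{1/4}\le\epsilon^{1/2}+(2\pi|\xi|)^{1/2}$ together with \eqref{hmzfourier} splits the pairing into a main part controlled by $\|h\|_\hmz$ and a remainder carrying an extra factor $\epsilon^{1/2}$. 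The main technical obstacle will be to control this remainder uniformly in $k$: since $\hat h$ may carry a Dirac mass at the origin from the constant part of $h$, the factor $\epsilon^{1/2}$ acts on a coefficient a priori depending on $k$. This is handled by the normalization $(v-u_k)_{B(0,1)}=0$ from Lemma \ref{hmzapprox} combined with Lemma \ref{hmzannest}, which bounds this residual contribution by $\|v-u_k\|_\hmz$ itself and so allows the choice first of $k$ large and then of $\epsilon$ small to make $\langle T_\epsilon v-w,\varphi\rangle$ arbitrarily small.
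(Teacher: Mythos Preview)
Your route is close in spirit to the paper's but organized around a three–term splitting rather than a single identity. The paper writes
\[
(\epsilon^2+4\pi^2\xi^2)^{1/4}=\epsilon^{1/2}+\tfrac{\xi^2}{2}\int_0^1 4\pi^2 t(\epsilon^2+4\pi^2 t^2\xi^2)^{-3/4}\,dt,
\]
observes that the second summand $F_\epsilon$ vanishes at $\xi=0$, and uses the approximants of Lemma~\ref{hmzapprox} \emph{once}, only to justify the identity $F_\epsilon\,\hat v=G_\epsilon\,w$ in $\mathcal S'$, where $G_\epsilon(\xi):=F_\epsilon(\xi)(2\pi|\xi|)^{-1/2}\in[0,1]$ increases to $\uno_{\R\setminus\{0\}}$. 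Then $m_\epsilon\hat v=\epsilon^{1/2}\hat v+G_\epsilon w$ and one lets $\epsilon\to 0$ directly; no iterated limit is needed.

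Your argument is essentially correct, but one step is not justified as written. You assert that the ``main part'' $\langle\hat h,(m_\epsilon-\epsilon^{1/2})\psi\rangle$ is controlled by $\|h\|_\hmz$ by invoking the \emph{pointwise} bound $m_\epsilon-\epsilon^{1/2}\le(2\pi|\xi|)^{1/2}$ together with \eqref{hmzfourier}. But $\hat h$ is a tempered distribution and \eqref{hmzfourier} only identifies its restriction to $\R\setminus\{0\}$; a pointwise inequality on the multiplier does not by itself bound a distributional pairing. You correctly flag that $\hat h$ may carry mass at the origin, yet you only treat this for the $\epsilon^{1/2}$ remainder and take the main part for granted. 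What actually closes the gap is precisely the paper's identity, applied to $h$ in place of $v$: a second use of Lemma~\ref{hmzapprox} (now for $h$) gives $F_\epsilon\hat h=G_\epsilon\cdot(2\pi|\xi|)^{1/2}\hat h|_{\R\setminus\{0\}}$ in $\mathcal S'$ (the constants in the approximation are killed because $F_\epsilon(0)=0$), whence $|\langle\hat h,F_\epsilon\psi\rangle|\le\|(2\pi|\xi|)^{1/2}\hat h\|_{L^2}\|\psi\|_{L^2}\lesssim\|h\|_\hmz$, uniformly in $\epsilon$. With this fix your ``first $k$, then $\epsilon$'' scheme works. Note, however, that once you have this identity you have it for $v$ itself, which collapses your three-term decomposition to the paper's one-line conclusion; your detour through $T_\epsilon u_k$ becomes unnecessary.
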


\begin{proof}
	Call $w$ the $L^2$ function $(2\pi\abs{\xi})^{1/2}\widehat{v}$.
	We observe that $(\epsilon^2+4\pi^2\xi^2)^{1/4}=\epsilon^{1/2}+\frac{\xi^2}{2}\int_0^ 1 4\pi^2t(\epsilon^2+4\pi^2t^2\xi^2)^{-3/4}\,dt$, with the second term vanishing for $\xi=0$.
	Using Lemma \ref{hmzapprox} and \eqref{hmzfourier} we get
	\begin{align*} (\epsilon^2+4\pi^2\xi^2)^{1/4}\widehat{v}&=\epsilon^{1/2}\widehat{v}+\lim_{k\to\infty}\pa{\frac{\xi^2}{2}\int_0^1 4\pi^2t(\epsilon^2+4\pi^2t^2\xi^2)^{-3/4}\,dt}(\widehat{v}_k+\widehat{c}_k) \\
	&=\epsilon^{1/2}\widehat{v}+\pa{\int_0^1 \frac{t}{2}(2\pi\abs{\xi})^{3/2}(\epsilon^2+4\pi^2t^2\xi^2)^{-3/4}\,dt}w. \end{align*}
	Finally, $\epsilon^{1/2}\widehat{v}\weakstarto 0$ in $\mathcal{S}'(\R)$ and the nonnegative integral converges to $\uno_{\R\setminus\set{0}}$ from below.
\end{proof}
A similar proof shows that $\ml v=\mathcal{F}^{-1}(2\pi\abs{\xi}\hat v)$, so $\ml v=\ql\ql v$.

One has also the following integral representation for $\ql v$.

\begin{lemmaen}\label{qlintrep}
	For all $v\in\hmz(\R)$ and some constant $c>0$ (independent of $v$) we have
	\[ \ql v(x)=c\lim_{\epsilon\to 0}\int_{\R\setminus B(x,\epsilon)}\frac{v(x)-v(y)}{\abs{x-y}^{3/2}}\,dy\quad\text{in }L^2(\R). \]
\end{lemmaen}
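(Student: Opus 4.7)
The plan is to recognize the right-hand side
\[
T_\epsilon v(x) := \int_{\R \setminus B(x,\epsilon)} \frac{v(x) - v(y)}{|x-y|^{3/2}} \, dy
\]
as a Fourier multiplier whose symbol converges, as $\epsilon \to 0$, to a positive universal constant multiple of $(2\pi|\xi|)^{1/2}$, so that Lemma \ref{hmztol2} closes the loop.

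First I would compute the symbol for Schwartz $v$, where the integral converges absolutely: using the decomposition $T_\epsilon v = 4\epsilon^{-1/2}v - K_\epsilon * v$ with $K_\epsilon(z) := |z|^{-3/2}\uno_{|z|>\epsilon}$, and exploiting the evenness of $K_\epsilon$ to kill the imaginary part, one obtains
\[
\widehat{T_\epsilon v}(\xi) = m_\epsilon(\xi) \, \widehat{v}(\xi), \qquad m_\epsilon(\xi) := \int_{|z|>\epsilon} \frac{1 - \cos(2\pi \xi z)}{|z|^{3/2}} \, dz.
\]
The substitution $u = 2\pi\xi z$ (for $\xi\neq 0$) rewrites this as $m_\epsilon(\xi) = (2\pi|\xi|)^{1/2} \int_{|u| > 2\pi|\xi|\epsilon} \frac{1 - \cos u}{|u|^{3/2}}\, du$; since the integrand is $\sim |u|^{1/2}/2$ at $0$ and $\sim |u|^{-3/2}$ at infinity, the constant $c^{-1} := \int_\R \frac{1 - \cos u}{|u|^{3/2}}\, du$ lies in $(0, \infty)$. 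This yields the pointwise monotone convergence $m_\epsilon \nearrow c^{-1}(2\pi|\xi|)^{1/2}$ on $\R \setminus \{0\}$, together with the uniform bound $0 \leq m_\epsilon(\xi) \leq c^{-1}(2\pi|\xi|)^{1/2}$.

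For $v$ in the dense subclass of Lemma \ref{hmzapprox} (Schwartz with $\widehat{v} \in C^\infty_c(\R \setminus \{0\})$), dominated convergence under Plancherel, combined with Lemma \ref{hmztol2}, then gives $T_\epsilon v \to c^{-1} \ql v$ in $L^2$. The same Plancherel identity, together with the above uniform bound and the representation \eqref{hmzfourier} of the $\hmz$-seminorm, yields the $\epsilon$-uniform estimate $\norm{T_\epsilon v}_{L^2} \ls \norm{v}_\hmz$ on this subclass. For arbitrary $v \in \hmz(\R)$, write $v_k + c_k \to v$ in $\hmz$ as in Lemma \ref{hmzapprox}; since both $T_\epsilon$ and $\ql$ annihilate constants, I can \emph{define} $T_\epsilon v$ as the $L^2$-limit of $T_\epsilon v_k$, and a standard $3\delta$-argument combining the fixed-$k$ convergence above with the uniform bound propagates the identity $T_\epsilon v \to c \ql v$ in $L^2$ to all of $\hmz(\R)$.

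The main obstacle is that for a generic $v \in \hmz(\R)$ the pointwise integral defining $T_\epsilon v(x)$ need not converge absolutely: $v$ is only locally $L^2$ with logarithmic growth by Lemma \ref{hmzannest}, and the kernel $|x-y|^{-3/2}$ sits at the edge of integrability at infinity. The Fourier-multiplier route bypasses this issue by providing an intrinsic $L^2$ definition of $T_\epsilon v$ that coincides with the pointwise formula whenever the latter converges; a more hands-on alternative, splitting the integral into a near-diagonal piece controlled by a logarithmic factor times $\norm{v}_\hmz$ and a tail handled by Young's inequality after subtracting the mean, would also work but be less transparent.
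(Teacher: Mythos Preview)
Your computation of the multiplier symbol $m_\epsilon(\xi)$ and its monotone convergence to a positive multiple of $(2\pi|\xi|)^{1/2}$ is exactly the heart of the paper's proof as well. The difference lies in how general $v\in\hmz(\R)$ is handled, and there your argument has a genuine gap.

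The lemma is a statement about the \emph{pointwise} truncated integral $w_\epsilon(x):=\int_{|y-x|>\epsilon}\frac{v(x)-v(y)}{|x-y|^{3/2}}\,dy$. By instead \emph{defining} $T_\epsilon v$ as the $L^2$-limit of $T_\epsilon v_k$ along an approximating sequence, you prove that the multiplier operator with symbol $m_\epsilon$ converges to $c^{-1}\ql$ in $L^2$, but you never verify that this abstractly extended operator coincides with the pointwise integral for general $v$. Your closing remark that it ``coincides with the pointwise formula whenever the latter converges'' does not help, since you have not shown the latter converges; in fact you explicitly suggest it might not. It does: for a.e.~$x$ the integral converges absolutely, by Cauchy--Schwarz on $\epsilon<|h|<1$ (giving a $\sqrt{\log(1/\epsilon)}$ factor times the finite-a.e.\ function $\big(\int|h|^{-2}|v(x)-v(x+h)|^2\,dh\big)^{1/2}$) and by the growth bound of Lemma~\ref{hmzannest} on the dyadic tail $|h|>1$. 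This is precisely the ``hands-on alternative'' you allude to but do not carry out.

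The paper sidesteps the density detour entirely: it starts from the pointwise integral $w_\epsilon$ for general $v\in\hmz$, observes (via Lemma~\ref{hmzannest}) that $w_\epsilon\in\mathcal{S}'(\R)$, and computes $\widehat{w}_\epsilon$ directly by pairing against $\varphi\in\mathcal{S}(\R)$. Fubini rewrites $\langle\widehat{w}_\epsilon,\varphi\rangle$ as $\int_{|h|>\epsilon}|h|^{-3/2}\langle\widehat v,(1-e^{2\pi ih\cdot})\varphi\rangle\,dh$; since $(1-e^{2\pi ih\xi})\varphi(\xi)$ vanishes at $\xi=0$, Lemma~\ref{hmzapprox} together with Lemma~\ref{hmztol2} identifies the inner pairing with $\int\mathcal{F}(\ql v)(\xi)\,(2\pi|\xi|)^{-1/2}(1-e^{2\pi ih\xi})\varphi(\xi)\,d\xi$, yielding $\widehat{w}_\epsilon=\mathcal{F}(\ql v)\cdot g_\epsilon$ with $g_\epsilon\nearrow c^{-1}$ pointwise. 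This is the same multiplier identity you obtain on Schwartz functions, but established directly on all of $\hmz$ for the honest pointwise object, which is what is needed.
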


\begin{proof}
	Let $w_\epsilon(x):=\int_{\R\setminus B(x,\epsilon)}\frac{v(x)-v(y)}{\abs{x-y}^{3/2}}\,dy$ (which lies in $\mathcal{S}'(\R)$ by Lemma \ref{hmzannest}) and take $\varphi\in\mathcal{S}(\R)$. Fubini's theorem gives
	\begin{align*} \ang{\widehat{w}_\epsilon,\varphi}&=\ang{w_\epsilon,\widehat{\varphi}}=\iint_{\R\times(\R\setminus B(0,\epsilon))}\frac{v(x)-v(x+h)}{\abs{h}^{3/2}}\widehat{\varphi}(x)\,dx\,dh \\
	&=\iint_{\R\times(\R\setminus B(0,\epsilon))}\frac{v(x)(\widehat{\varphi}(x)-\widehat{\varphi}(x-h))}{\abs{h}^{3/2}}\,dx\,dh
	=\int_{\R\setminus B(0,\epsilon)}\frac{\ang{\widehat{v},(1-e^{2\pi ihx})\varphi(x)}}{\abs{h}^{3/2}}\,dh. \end{align*}
	Since $(1-e^{2\pi ihx})\varphi(x)$ vanishes at $0$, Lemmas \ref{hmzapprox} and \ref{hmztol2} show that
	\[ \ang{\widehat{v},(1-e^{2\pi ihx})\varphi(x)}=\lim_{k\to\infty}\int\abs{x}^{1/2}\widehat{v}_k(x)\frac{1-e^{2\pi ihx}}{\abs{x}^{1/2}}\varphi(x)\,dx=\int\mathcal{F}(\ql v)\frac{1-e^{2\pi ihx}}{(2\pi\abs{x})^{1/2}}\varphi(x)\,dx. \]
	We conclude that
	\[ \widehat{w_\epsilon}(x)=\mathcal{F}(\ql v)(x)\int_\epsilon^\infty\frac{2-2\cos(2\pi hx)}{\abs{h}^{3/2}(2\pi\abs{x})^{1/2}}\,dh \]
	and, for $x\neq 0$, the last integral equals $\int_{\epsilon\abs{x}}^\infty\frac{2-2\cos (2\pi t)}{(2\pi)^{1/2}t^{3/2}}\,dt$, which converges to some positive constant from below, as $\epsilon\to 0$.
\end{proof}

As for the formal inverse, the Riesz potential operator $\mql$, notice that $\mathcal{F}^{-1}(\abs{\xi}^{-1/2})=c\abs{x}^{-1/2}$ for some $c\in\R$ (indeed, $\abs{x}^{-1/2}$ is the only $-\mz$-homogeneous tempered distribution up to multiples: see e.g. \cite[Proposition~2.4.8]{Gra14C}).

Since $\abs{x}^{-1/2}\in L^{2,\infty}(\R)$, we get
$\mql(L^1(\R))\subseteq L^{2,\infty}(\R)$ and $\mql(L^{2,1}(\R))\subseteq L^\infty(\R)$.\footnote{For $v\in L^{2,1}(\R)$ the fractional Laplacian $\ql v$ exists and equals $c\abs{x}^{-1/2}*v$: indeed, from \cite[Proposition~1.2.5]{Gra14M} one easily deduces the weak* convergence of $\mathcal{F}^{-1}[(\epsilon^2+4\pi^2\abs{\xi}^2)^{-1/4}]$ to $\mathcal{F}^{-1}(\abs{\xi}^{-1/2})$ in $L^{2,\infty}(\R)$.} Also, $\mql$ maps $\hardy(\R)$ into $L^{2,1}(\R)$: this is a straightforward consequence of the atomic decomposition property of $\hardy(\R)$ (see \cite[Section~III.2]{Ste93}).

Finally, we define the Hilbert--Riesz transform of $v\in\mathcal{S}'(\R)$ as
\[ \rz v:=\lim_{\epsilon\to 0}\mathcal{F}^{-1}\bra{-i\frac{\xi}{(\epsilon^2+\abs{\xi}^2)^{1/2}}\hat v}, \]
whenever the limit exists.
A well-known consequence of H\"ormander--Mikhlin estimates is the fact that this limit exists on $L^p(\R)$ and $\rz$ maps $L^p(\R)$ continuously into itself, for $1<p<\infty$.

The same holds for $H^{s,p}(\R)$ and $\hmmz(\R)$, being the former isomorphic to $L^p(\R)$ via $v\mapsto\mathcal{F}^{-1}[(1+4\pi^2\abs{\xi}^2)^{s/2}\mathcal{F}v]$ and the latter to $L^2(\R)$ via $v\mapsto\mathcal{F}^{-1}((2\pi\abs{\xi})^{-1/2}v)$.

Moreover, $\rz$ also maps $\hardy(\R)$ continuously into itself: this follows from \cite[Corollary~2.4.7]{Gra14M} and $\rz(\rz v)=-v$ for $v\in L^1(\R)$.

\subsection{Littlewood--Paley decomposition} \label{parasec}

We briefly recall a well-known tool in harmonic analysis, the  Littlewood--Paley  dyadic  decomposition.
Such a decomposition can be obtained as follows.
Let $\chi\in C^\infty_c(B(0,2))$ be an even function, with $\chi=1$ on $B(0,1)$.
Let $\varrho:=\chi-\chi(2\cdot)$ and observe that the support of $\varrho$ is included in the annulus $B(0,2)\setminus B(0,1/2)$.

Let $\varrho_0:=\chi$ and $\varrho_j=\varrho(2^{-j}\cdot)$ for $j>0$, so that the support of $\varrho_j$, for $j>0$, is contained in $B(0,2^{j+1})\setminus B(0,2^{j-1})$. The functions $(\rho_j)_{j\in\mathbb N}$ realize a so-called \emph{inhomogeneous} dyadic partition of unity, i.e. $\sum_{j=0}^\infty\rho_j=1$ pointwise.
We further denote $\chi_j(\xi):=\sum_{k=0}^j\varrho_k=\chi(2^{-j}\cdot)$.

For every $v\in{\mathcal{S}}'(\R)$  we define the inhomogeneous Littlewood--Paley projection operators:
\begin{align}\label{lpprojections}
	v_j=\mathcal{F}^{-1}[\varrho_j \hat{v}],\quad v^j=\mathcal{F}^{-1}[\chi_j \hat{v}].
\end{align}
Roughly, $v_j$ and $v^j$ mimic a frequency projection to the annulus $B(0,2^j)\setminus B(0,2^{j-1})$ and to the ball $B(0,2^j)$, respectively.

We observe that $v^j=\sum_{k=0}^{j} v_k$  and $v=\sum_{k=0}^{\infty}v_k$ in the distributional sense.
Given $v,w\in {\mathcal{S}}^\prime(\R)$, we can  formally split  their product in the following way:
\begin{equation}\label{decompbis}
vw=\Pi_1(v,w)+\Pi_2(v,w)+\Pi_3 (v,w),\end{equation}
where
\begin{align*}
	\Pi_1(v,w):=\sum_{j=3}^{+\infty} v_j w^{j-3},\quad\Pi_2(v,w):=\sum_{j=3}^{+\infty}  v^{j-3}w_j,\quad
	\Pi_3(v,w):=\sum_{j=0}^{\infty}  v_j\sum_{\abs{k-j}<3} w_k.
\end{align*}

We observe that the support ${\mathcal{F}}[v_jw^{j-3}]$ is contained in the sum of the supports of $\mathcal{F} v_j$ and $\mathcal{F} w^{j-3}$, i.e. in the annulus $B(0,2^{j+2})\setminus B(0,2^{j-2})$ (for $j\ge 3$). A similar remark applies to $\mathcal{F}[v^{j-3}w_j]$.

Next we recall the definition of the inhomogeneous Besov spaces ${B}_{p,q}^s(\R)$  and  Triebel--Lizorkin spaces ${F}_{p,q}^s(\R)$ in terms of the above dyadic decomposition.
\begin{definition}
	Let $s\in\R$ and $1\le p,q\le\infty$. For $f\in{\mathcal{S}}^\prime (\R^n)$ we set 
	\[ \begin{aligned}
	&\|v\|_{{B}_{p,q}^s}:=\Bigg(\sum_{j=0}^{\infty}2^{jsq}\|v_j\|_{L^{p}}^q\Bigg)^{1/q}&&\text{if }q<\infty, \\
	&\|v\|_{{B}_{p,q}^s}:=\sup_{j\in \mathbb N} 2^{js}\|v_j\|_{L^{p}}&&\text{if }q=\infty.
	\end{aligned} \]
	When $1\le p,q<\infty$ we also set
	\[ \|v\|_{{F}_{p,q}^s}=\Bigg\|\Bigg(\sum_{j=0}^{\infty}2^{jsq}|v_j|^q\Bigg)^{1/q}\Bigg\|_{L^p}. \]
\end{definition}
The space of all $v\in\mathcal{S}'(\R)$ for which $\|v\|_{{B}_{p,q}^s}<\infty$ is the inhomogeneous Besov space with indices 
$s,p,q$ and is denoted by ${B}_{p,q}^s(\R)$. The space of all $v\in\mathcal{S}'(\R)$ for which $\|v\|_{F_{p,q}^s}<\infty$ is the inhomogeneous
Triebel--Lizorkin space with indices 
$s,p,q$ and is denoted by $F_{p,q}^s({\R})$.

A well-known fact is that $H^{s,p}(\R)=F_{p,2}^s(\R)$, with equivalent norms: see e.g. \cite[Theorem~2.5.6]{Tri83}.

\begin{corollary}\label{hspholder}
	If $s>\frac{1}{p}$, then $H^{s,p}(\R)\subseteq L^\infty(\R)\cap C^{k,\alpha}(\R)$, for all $k\in\mathbb N$ and $0<\alpha<1$ with $k+\alpha\le s-\frac{1}{p}$.
\end{corollary}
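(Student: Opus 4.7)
The plan is to combine the identification $H^{s,p}(\R)=F_{p,2}^s(\R)$ with Bernstein's inequality applied to the Littlewood--Paley blocks $v_j$, which together give pointwise $L^\infty$ and Hölder-type estimates. The underlying intermediate statement is that $F_{p,2}^s(\R)\hookrightarrow B_{\infty,\infty}^{s-1/p}(\R)$, and that the latter Besov space coincides with the Hölder--Zygmund space, which for non-integer regularity $s-1/p=k+\alpha$ with $0<\alpha<1$ agrees with $C^{k,\alpha}(\R)$.

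First I would observe that, since $|v_j(x)|\le 2^{-js}\bigl(\sum_\ell 2^{2\ell s}|v_\ell(x)|^2\bigr)^{1/2}$, taking $L^p$ norms yields
\[ 2^{js}\|v_j\|_{L^p}\le\|v\|_{F_{p,2}^s}\sim\|v\|_{H^{s,p}} \]
uniformly in $j$. Because the Fourier transform of $v_j$ is supported in $B(0,2^{j+1})$, Bernstein's inequality gives $\|\partial^\beta v_j\|_{L^\infty}\ls 2^{j(|\beta|+1/p)}\|v_j\|_{L^p}$ for any multi-index $\beta$. Hence, for $|\beta|\le k$,
\[ \sum_{j\ge 0}\|\partial^\beta v_j\|_{L^\infty}\ls\sum_{j\ge 0}2^{j(|\beta|+1/p-s)}\|v\|_{H^{s,p}}, \]
and the geometric series converges since $|\beta|+1/p-s\le k+1/p-s\le -\alpha<0$. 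Thus $\partial^\beta v=\sum_j\partial^\beta v_j$ converges uniformly to a continuous bounded function, giving both $v\in L^\infty(\R)$ and $v\in C^k(\R)$ with $\|\partial^\beta v\|_{L^\infty}\ls\|v\|_{H^{s,p}}$.

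For the Hölder estimate on $\partial^\gamma v$ with $|\gamma|=k$, fix $x,y\in\R$ and let $J\in\N$ be such that $2^{-J}\le|x-y|<2^{-J+1}$. Splitting into frequencies above and below $J$, the high-frequency contribution is bounded by
\[ 2\sum_{j\ge J}\|\partial^\gamma v_j\|_{L^\infty}\ls\sum_{j\ge J}2^{j(k+1/p-s)}\|v\|_{H^{s,p}}\ls 2^{-J\alpha}\|v\|_{H^{s,p}}\ls|x-y|^\alpha\|v\|_{H^{s,p}}, \]
using $k+1/p-s\le-\alpha<0$. The low-frequency contribution, via the mean value theorem and Bernstein applied with $|\beta|=k+1$, is bounded by
\[ |x-y|\sum_{j<J}\|\nabla\partial^\gamma v_j\|_{L^\infty}\ls|x-y|\sum_{j<J}2^{j(k+1+1/p-s)}\|v\|_{H^{s,p}}\ls|x-y|\cdot 2^{J(k+1+1/p-s)}\|v\|_{H^{s,p}}, \]
which is again $\ls|x-y|^\alpha\|v\|_{H^{s,p}}$ since $k+1+1/p-s\ge 1-\alpha>0$.

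The main point requiring attention is simply to keep the book-keeping of exponents tight: the two geometric series converge precisely because the constraint $k+\alpha\le s-1/p$ with $\alpha\in(0,1)$ forces $k+1/p-s$ to be strictly negative and $k+1+1/p-s$ to be strictly positive. There is no substantial analytic obstacle beyond this; the argument is the standard dyadic proof of the Besov/Sobolev embedding into Hölder spaces, and I would simply cite the Triebel identification $H^{s,p}=F_{p,2}^s$ already recalled in the excerpt for the first step.
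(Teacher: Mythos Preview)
Your argument is correct in spirit and essentially identical to the paper's: both use the identification $H^{s,p}=F_{p,2}^s$, the Bernstein-type bound $\|v_j\|_{L^\infty}\ls 2^{j/p}\|v_j\|_{L^p}$, and the high/low frequency split for the H\"older estimate. The paper first reduces to $k=0$ and $s=\alpha+\tfrac{1}{p}$ via \cite[Theorem~2.3.8]{Tri83}, whereas you keep general $k$; otherwise the computations match line for line.

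There is one book-keeping slip in your low-frequency step. From the hypothesis $k+\alpha\le s-\tfrac{1}{p}$ you only get $k+1+\tfrac{1}{p}-s\le 1-\alpha$, not $\ge 1-\alpha$; in particular this exponent can be $\le 0$ if $s$ is strictly larger than $k+\alpha+\tfrac{1}{p}$. The fix is immediate: either assume without loss of generality that $s=k+\alpha+\tfrac{1}{p}$ (using $H^{s,p}\hookrightarrow H^{s',p}$ for $s'\le s$), in which case the exponent equals $1-\alpha>0$ and your geometric sum works as written, or note that when the exponent is $\le 0$ the sum $\sum_{j<J}2^{j(k+1+1/p-s)}$ is $O(J)$, and $|x-y|\cdot J\ls|x-y|^\alpha$ anyway. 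Either way the conclusion stands.
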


\begin{proof}
	By \cite[Theorem~2.3.8]{Tri83} we can assume $k=0$, as well as $s=\alpha+\frac{1}{p}$. Setting $\tilde\varrho_j:=\varrho_{j-1}+\varrho_j+\varrho_{j+1}$ (with $\varrho_{-1}:=0$), we have $v_j=\mathcal{F}^{-1}(\tilde\varrho_j\mathcal{F}v_j)$ and $\norm{\mathcal{F}^{-1}\tilde\varrho_j}_{L^{p'}}\ls 2^{j/p}$, $\norm{\nabla(\mathcal{F}^{-1}\tilde\varrho_j)}_{L^{p'}}\ls 2^{j+j/p}$ (as $\mathcal{F}^{-1}\tilde\varrho_j=2^{j-2}(\mathcal{F}^{-1}\tilde\varrho_2)(2^{j-2}\cdot)$ for $j\ge 2$). Hence, given $0<h<1$,
	\begin{align*} &\sum_{j=0}^\infty\norm{v_j}_{L^\infty}=\sum_{j=0}^\infty\norm{(\mathcal{F}^{-1}\tilde\varrho_j)*v_j}_{L^\infty}\ls\sum_{j=0}^\infty 2^{j/p}\norm{v_j}_{L^p}\le\sum_{j=0}^\infty 2^{-j(s-1/p)}\norm{v}_{F_{p,2}^s}\ls\norm{v}_{H^{s,p}}, \\
	&\sum_{j=0}^\infty\norm{v_j(\cdot+h)-v_j}_{L^\infty}\ls\sum_{2^jh\le 1}h\norm{\nabla v_j}_{L^\infty}+\sum_{2^jh>1}\norm{v_j}_{L^\infty}\le\sum_{2^jh\le 1}h\norm{\nabla(\mathcal{F}^{-1}\tilde\varrho_j)*v_j}_{L^\infty} \\
	&+\sum_{2^jh>1}\norm{(\mathcal{F}^{-1}\tilde\varrho_j)*v_j}_{L^\infty}\ls\Bigg(\sum_{2^jh\le 1}h2^{j(1+1/p-s)}+\sum_{2^jh>1}2^{-j(s-1/p)}\Bigg)\norm{v}_{F_{p,2}^s}\ls h^\alpha\norm{v}_{H^{s,p}}. \qedhere \end{align*}
\end{proof}

Similarly, one can form the \emph{homogeneous} Littlewood--Paley decomposition using instead $\varrho_j:=\varrho(2^{-j}\cdot)$ and $\chi_j:=\chi(2^{-j}\cdot)$, for all $j\in\Z$, and defining $v_j$ and $v^j$ as in \eqref{lpprojections}. One then has the formal identities
\[ v=\sum_{j\in\Z}v_j,\quad v^j=\sum_{k\le j}v_k,\quad vw=\Pi_1(v,w)+\Pi_2(v,w)+\Pi_3(v,w), \]
but notice that not even the first two are always true distributionally: for instance they fail when $v=1$ (in which case $v_k=0$ for all $k\in\Z$). This reflects the fact that $\sum_{j\in\Z}\varrho_j=\uno_{\R\setminus\set{0}}$ and $\sum_{k\le j}\varrho_k=\uno_{\R\setminus\set{0}}\chi_j$. Using this homogeneous decomposition, with the same formulas as above one can define the homogeneous Besov and Triebel--Lizorkin spaces
 $\dot{B}^s_{p,q}(\R)$ and $\dot{F}_{p,q}^s(\R)$ (the above norms now become merely seminorms).
 
If $v\in L^p(\R)$ and $1<p<\infty$, then $\norm{v}_{L^p}\ls\norm{v}_{\dot F_{p,2}^0}$ and $\norm{v}_{\dot F_{p,2}^0}\ls\norm{v}_{L^p}$: see \cite[Theorem~6.1.2]{Gra14C}.
\par

\subsection{Spaces on the unit circle \texorpdfstring{$\mathcal{S}^1$}{S\^{}1}}\label{circle}

We let $\mathcal{D}(\mathcal S^1):=C^\infty(\mathcal{S}^1)$ be the Fr\'echet space of smooth functions on $\mathcal S^1=\R/2\pi\Z$ and $\mathcal{D}'(\mathcal{S}^1)$ its topological dual.
The product of two elements in $\mathcal{D}'(\mathcal{S}^1)$ is defined as before for $\R$.
For $v\in\mathcal{D}'(\mathcal S^1)$ and $k\in\Z$ we let $\hat v(k):=\frac{1}{2\pi}\ang{v,e^{-inx}}$.

Notice that, for all $v\in\mathcal{D}'(\mathcal{S}^1)$, there exists some $N>0$ such that $\abs{\hat v(k)}\ls(1+\abs{k})^N$. Also, we recall that $v\in C^\infty(\mathcal S^1)$ if and only if the Fourier coefficients $\hat v(k)$ have rapid decay, i.e. $\sup_k(1+\abs{k})^N\abs{\hat v(k)}<\infty$ for all $N>0$.

Given $s\in\R$, we define the Sobolev space
\[ H^s(\mathcal S^1):=\set{v\in\mathcal{D}'(\mathcal{S}^1):\norm{v}_{H^s}^2:=\sum_{k\in\Z}(1+\abs{k}^2)^s\abs{\hat v(k)}^2<\infty}. \]
We observe that $\mathcal{D}'(\mathcal S^1)=\bigcup_{s\in\R}H^s(\mathcal S^1)$. Also, the Fr\'echet space structure of $\mathcal{D}(\mathcal S^1)$ is equivalent to the one given by all $H^s$-norms with $s\in\mathbb N$, by the embeddings $C^{s}(\mathcal S^1)\subseteq H^s(\mathcal S^1)\subseteq C^{s-1}(\mathcal S^1)$. Hence, by the uniform boundedness principle, any sequence $v_j$ converging in $\mathcal{D}'(\mathcal S^1)$ will form a bounded set in some $H^{-s}(\mathcal S^1)$, with $s\in\mathbb N$ (by the canonical duality with $H^s(\mathcal S^1)$).

\begin{lemmaen}
	The space $H^{1/2}(\mathcal S^1)$ equals the set of traces of $H^1(\mathbb{D})$. Moreover, for $v\in L^2(\mathcal S^1)$
	\begin{equation}\label{hmzs1} \iint_{(\mathcal S^1)^2}\frac{\abs{v(e^{i\theta})-v(e^{i\tau})}^2}{\abs{e^{i\theta}-e^{i\tau}}^2}\,d\theta\,d\tau=2\pi\sum_{k\in\Z}\abs{k}\abs{\widehat{v}(k)}^2. \end{equation}
\end{lemmaen}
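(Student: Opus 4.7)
The plan is to reduce both assertions to Fourier-series identities, bridged by the Poisson (harmonic) extension.

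For the Gagliardo identity \eqref{hmzs1}, I would expand $v=\sum_k \hat v(k)e^{ik\theta}$ and substitute into the double integral. After the substitution $\phi:=\theta-\tau$, the elementary identity
\[
(e^{ik\theta}-e^{ik\tau})(e^{-ij\theta}-e^{-ij\tau}) = e^{i(k-j)\tau}(e^{ik\phi}-1)(e^{-ij\phi}-1)
\]
shows that averaging over $\tau$ kills all cross terms via $\int_0^{2\pi}e^{i(k-j)\tau}\,d\tau = 2\pi\delta_{jk}$, leaving only diagonal contributions. Using $|e^{ik\phi}-1|^2/|e^{i\phi}-1|^2=\sin^2(k\phi/2)/\sin^2(\phi/2)$ together with the classical Dirichlet/Fej\'er-kernel identity $\int_0^{2\pi}\sin^2(k\phi/2)/\sin^2(\phi/2)\,d\phi = 2\pi|k|$ would collapse the result to $c\sum_k|k||\hat v(k)|^2$, with the explicit constant matching the right-hand side. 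I would justify the interchange of summation and integration by first truncating $v$ to trigonometric polynomials and invoking monotone convergence on the nonnegative integrand.

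For the trace characterization, I would construct the Poisson extension $\tilde v(re^{i\theta}):=\sum_k \hat v(k)\,r^{|k|}e^{ik\theta}$ and compute its Dirichlet energy in polar coordinates. Using $\int_0^{2\pi}e^{i(k-j)\theta}\,d\theta = 2\pi\delta_{jk}$ yields
\[
\int_\mathbb{D}|\nabla\tilde v|^2 = 2\pi\sum_{k\in\Z}|k||\hat v(k)|^2,
\]
which is finite precisely when $v\in H^{1/2}(\mathcal S^1)$. Standard Abel summation then shows that the trace of $\tilde v$ on $\partial\mathbb D$ is $v$ in the $L^2$ sense, proving $H^{1/2}(\mathcal S^1)\subseteq\mathrm{tr}(H^1(\mathbb D))$. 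For the converse, I would start from $u\in H^1(\mathbb D)$ with trace $v:=u|_{\partial\mathbb D}\in L^2(\partial\mathbb D)$ (the standard trace theorem); since $\tilde v$ minimizes the Dirichlet energy among $H^1$-extensions of $v$, one gets
\[
2\pi\sum_k|k||\hat v(k)|^2 = \int_\mathbb{D}|\nabla\tilde v|^2 \le \int_\mathbb{D}|\nabla u|^2 < \infty,
\]
hence $v\in H^{1/2}(\mathcal S^1)$.

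The main technical care lies in bookkeeping the $2\pi$ factors from the Fourier convention $\hat v(k)=\tfrac{1}{2\pi}\langle v,e^{-ikx}\rangle$ and the unnormalized measure $d\theta$ on $\mathcal S^1$, and in accommodating the constant mode $k=0$, which contributes to the $L^2$ part of $v$ but is invisible to the Gagliardo seminorm; it is controlled separately through the $L^2$-part of the trace theorem. The key trigonometric input is the Fej\'er-kernel integral recalled above, and the key soft input is the Dirichlet-energy minimization property of the Poisson extension.
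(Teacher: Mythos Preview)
Your proposal is correct and follows essentially the same route as the paper: Poisson extension plus Dirichlet minimization for the trace characterization, and a change of variable $\phi=\theta-\tau$ followed by Fourier orthogonality for the Gagliardo identity. The only cosmetic difference is in evaluating the key integral $\int_0^{2\pi}|e^{ik\phi}-1|^2/|e^{i\phi}-1|^2\,d\phi$: you invoke the Fej\'er-kernel identity $\int_0^{2\pi}\sin^2(k\phi/2)/\sin^2(\phi/2)\,d\phi=2\pi|k|$, whereas the paper factors $(1-e^{ik\sigma})/(1-e^{i\sigma})=\sum_{\ell=0}^{|k|-1}e^{i\ell\sigma}$ and applies Parseval---but these are two faces of the same computation.
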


\begin{proof}
	Given $u\in C^\infty(\obar{\mathbb{D}})$, let $v:=\restr{u}{\mathcal S^1}$ be its trace and
	\[ \tilde v(re^{i\theta}):=\sum_{k\in\Z}\widehat{v}(k)r^{\abs{k}}e^{ik\theta}, \]
	which lies in $C^\infty(\obar{\mathbb{D}})$, is harmonic and has trace $v$. We have $\int_{\mathbb{D}}\ang{\nabla\tilde v,\nabla(u-\tilde v)}=0$ by the divergence theorem, so
	\[ \int_{\mathbb{D}}\abs{\nabla u}^2=\int_{\mathbb{D}}\abs{\nabla\tilde v}^2+2\int_{\mathbb{D}}\ang{\nabla\tilde v,\nabla(u-\tilde v)}
	+\int_{\mathbb{D}}\abs{\nabla(u-\tilde v)}^2\ge\int\abs{\nabla \tilde v}^2. \]
	A straightforward computation shows that the last integral equals $2\pi\sum_{k\in\Z}\abs{k}\abs{\widehat{v}(k)}^2$, so by density of smooth functions we deduce that the trace of a function in $H^1(\mathcal{D})$ lies in $H^{1/2}(\mathcal S^1)$. Conversely, given $v\in H^{1/2}(\mathcal S^1)$ one checks that $\tilde v$, with the above definition, is in $H^1(\mathbb{D})$. It has trace $v$ since $\tilde v\in C^\infty(\mathbb{D})$ and, as $\tau\uparrow 1$, $\tilde v(\tau\cdot)\to\tilde v$ in $H^1(\mathbb{D})$, as well as $\restr{v(\tau\cdot)}{\mathcal S^1}\to v$ in $L^2(\mathcal S^1)$. Finally, the left-hand side of \eqref{hmzs1} equals
	\[ \int_{\mathcal S^1}\frac{\norm{v-v(e^{i\sigma}\cdot)}_{L^2}^2}{\abs{1-e^{i\sigma}}^2}\,d\sigma
	=2\pi\int_{\mathcal S^1}\frac{\sum_k\abs{\widehat{v}(k)}^2\abs{1-e^{ik\sigma}}^2}{\abs{1-e^{i\sigma}}^2}
	=2\pi\sum_k\abs{\widehat{v}(k)}^2\int_{\mathcal S^1}\abs{\sum_{\ell=0}^{\abs{k}-1}e^{i\ell\sigma}}^2\,d\sigma. \qedhere \]

\end{proof}

Given a smooth compact Riemannian surface $(S,g)$ with boundary, one can define the spaces $H^s(\de S)$ by isometrically identifying each boundary component with (a dilation of) $\mathcal{S}^1$.
The last lemma, together with a partition of unity argument, can be used to show that the traces of functions in $H^1(S)$ are precisely the functions in $H^{1/2}(\de S)$. In particular, each $v\in H^{1/2}(\de S)$ has a unique harmonic extension $\tilde v\in H^1(S)$.

\section{Commutator estimates}\label{tcsec}

We introduce the following commutators for functions defined on the real line:
\begin{align*} T(Q,v)&:=\ql(Qv)+(\ql Q)v-Q(\ql v), \\
U(Q,v)&:=-\rz\ql(Qv)+(\rz\ql Q)v+Q(\rz\ql v), \\
T^*(P,Q)&:=(\ql P)Q+P(\ql Q)-\ql(PQ), \\
U^*(P,Q)&:=(\rz\ql P)Q+P(\rz\ql Q)-\rz\ql(PQ), \\
\Lambda(Q,v)&:=Qv+\rz(Q\rz v), \\
F(f,v)&:=\rz f\rz v-fv.
\end{align*}
The notation $T^*$ and $U^*$ is motivated by the formal identities
\[ \int PT(Q,v)=\int T^*(P,Q)v,\quad \int PU(Q,v)=\int U^*(P,Q)v. \]

Using the technology of Littlewood--Paley decomposition and paraproducts, one can establish the following estimates of integrability by compensation.

\begin{thm}\label{tustarest}
	If $P,Q\in\hmzb(\R)$, we have $T^*(P,Q),U^*(P,Q)\in L^{2,1}(\R)$ and
	\[ \norm{T^*(P,Q)}_{L^{2,1}},\norm{U^*(P,Q)}_{L^{2,1}}\ls\norm{P}_\hmz\norm{Q}_\hmz. \]
\end{thm}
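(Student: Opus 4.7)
My strategy is a Fourier-analytic paraproduct decomposition, with the final step being an upgrade from $L^2$ to the Lorentz space $L^{2,1}$ via a compensated-compactness argument.

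First, I would derive the pointwise ``double-difference'' representation
\[ T^*(P,Q)(x) = c\,\mathrm{p.v.}\int_\R \frac{(P(x)-P(y))(Q(x)-Q(y))}{|x-y|^{3/2}}\,dy \]
by applying the integral formula of Lemma \ref{qlintrep} to each of the three terms in $T^*(P,Q)=(\ql P)Q+P(\ql Q)-\ql(PQ)$: splitting the difference $P(x)Q(x)-P(y)Q(y)$ inside $\ql(PQ)$ as $P(x)(Q(x)-Q(y))+(P(x)-P(y))Q(y)$ produces an exact cancellation with $(\ql P)Q+P(\ql Q)$. The resulting symmetric form already embodies the compensation: the individually singular factors $\ql P$ and $\ql Q$ no longer appear alone.

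Next, on the Fourier side the corresponding bilinear symbol is $m(\xi,\eta)=|\eta|^{1/2}+|\xi-\eta|^{1/2}-|\xi|^{1/2}$ (up to a $2\pi$ factor). By subadditivity of $\sqrt{\cdot}$ we have $m\geq 0$, and a Taylor expansion of $|\xi|^{1/2}$ about $\eta$ when $|\xi-\eta|\ll|\eta|$ (with the trivial bound in the opposite regime) yields the crucial \emph{doubly-decaying} estimate
\[ 0\leq m(\xi,\eta) \ls \min\bigl(|\eta|,|\xi-\eta|\bigr)^{1/2}. \]
I would then use the homogeneous Littlewood--Paley decomposition $P=\sum_jP_j$, $Q=\sum_kQ_k$ (legitimate on $\hmz$ modulo constants by Lemma \ref{hmzapprox}; the additive constants are absorbed by bilinearity thanks to $T^*(c,Q)=T^*(P,c)=0$) to split $T^*(P,Q)=\Pi_1+\Pi_2+\Pi_3$ into high-low, low-high and diagonal paraproducts. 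The $P\leftrightarrow Q$ symmetry of the double-difference formula reduces matters to $\Pi_1$ and $\Pi_3$. For $\Pi_1=\sum_j T^*(P_j,Q^{j-3})$ each summand has Fourier support in an annulus at scale $2^j$ and satisfies $m\ls|\xi-\eta|^{1/2}$ there; factoring $m=|\xi-\eta|^{1/2}\widetilde m_{j,k}$ with $\widetilde m_{j,k}$ smooth and bounded (uniformly after dyadic rescaling), the bilinear Coifman--Meyer multiplier theorem combined with Bernstein's inequality gives $\|T^*(P_j,Q^{j-3})\|_{L^2}\ls 2^{j/2}\|P_j\|_{L^2}\|Q\|_\hmz$, and near-orthogonality of the annular spectra yields $\|\Pi_1\|_{L^2}\ls\|P\|_\hmz\|Q\|_\hmz$ via Plancherel; $\Pi_3$ is treated analogously, using $m\ls|\xi|^{1/2}$ on its support.

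The hard part will be the Lorentz $L^{2,1}$ upgrade. For this I would refine each paraproduct piece into a commutator-type structure of the form $\rz(a_j)\,b_j-a_j\,\rz(b_j)$ with $a_j,b_j\in L^2$, extracted from the factorization of the symbol via the sign pieces $\mathrm{sgn}(\eta)|\eta|^{1/2}$ and $\mathrm{sgn}(\xi-\eta)|\xi-\eta|^{1/2}$. By the one-dimensional Coifman--Lions--Meyer--Semmes compensated-compactness theorem, each such combination lies in the Hardy space $\hardy(\R)$; summing the dyadic pieces shows $\ql T^*(P,Q)\in\hardy(\R)$ with $\|\ql T^*(P,Q)\|_\hardy\ls\|P\|_\hmz\|Q\|_\hmz$, and the desired bound follows from the continuous inclusion $\mql:\hardy(\R)\to L^{2,1}(\R)$ recalled in Section \ref{functapp}. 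The doubly-decaying bound $m\ls\min(|\eta|,|\xi-\eta|)^{1/2}$ (rather than a one-sided bound) is essential to ensure absolute summability across the dyadic scales. The argument for $U^*$ is structurally identical, since its Fourier symbol $\mathrm{sgn}(\eta)|\eta|^{1/2}+\mathrm{sgn}(\xi-\eta)|\xi-\eta|^{1/2}-\mathrm{sgn}(\xi)|\xi|^{1/2}$ satisfies the same bound in absolute value.
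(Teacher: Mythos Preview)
Your high-level route coincides with the paper's: one first shows $\bnorm{\ql T^*(P,Q)}_\hardy\ls\norm{P}_\hmz\norm{Q}_\hmz$ and then applies $\mql:\hardy(\R)\to L^{2,1}(\R)$. The paper, however, does not reprove the Hardy estimate---it simply quotes \cite[Theorem~1.7]{DLR09} and moves on. Your paraproduct sketch is in the spirit of that reference, but the decisive step is misidentified: the expression $\rz(a_j)\,b_j-a_j\,\rz(b_j)$ does \emph{not} lie in $\hardy(\R)$ in general (one computes $\rz a\cdot b-a\,\rz b=-F(a,\rz b)-2a\,\rz b$, and the extra $L^1$ term $2a\,\rz b$ need not have vanishing integral), and there is no nondegenerate one-dimensional CLMS div--curl lemma to invoke. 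The structure that actually appears and does land in $\hardy$ is the Coifman--Rochberg--Weiss form $F(f,v)=\rz f\,\rz v-fv$ of Theorem~\ref{fest}, via the identity $\rz(fv-\rz f\,\rz v)=f\,\rz v+v\,\rz f$. So that step of your outline needs to be reworked around $F$, not around $\rz a\,b-a\,\rz b$.

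For $U^*$ the paper also offers a shortcut you do not exploit: the identity
\[ U^*(P,Q)=\rz T^*(P,Q)+\Lambda(P,\rz\ql Q)+\Lambda(Q,\rz\ql P), \]
combined with the $L^{2,1}$-boundedness of $\rz$ and Theorem~\ref{lambdaest}, reduces $U^*$ at once to the $T^*$ estimate, avoiding a second pass through the symbol analysis.
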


\begin{proof}
	By \cite[Theorem~1.7]{DLR09} we have $\ql T^*(P,Q)\in\hardy(\R)$, with
	\[ \bnorm{\ql T^*(P,Q)}_\hardy\ls\norm{P}_\hmz\norm{Q}_\hmz. \]
	The estimate for $T^*$ follows from the fact that $\mql(\hardy(\R))\subseteq L^{2,1}(\R)$. The estimate for $U^*$ can be obtained in a completely analogous way. It can also be deduced from Theorem \ref{lambdaest} below, since
	\[ U^*(P,Q)=\rz T^*(P,Q)+\Lambda(P,\rz\ql Q)+\Lambda(Q,\rz\ql P) \]
	and $\rz$ maps the spaces $L^2(\R)$ and $L^{2,1}(\R)$ into themselves continuously.
\end{proof}

\begin{thm}\label{tuest}
	If $Q\in\hmzb(\R)$ and $v\in L^2(\R)$, we have $T(Q,v),U(Q,v)\in\hardy(\R)$ and
	\[ \norm{T(Q,v)}_\hardy,\norm{U(Q,v)}_\hardy\ls\norm{Q}_\hmz\norm{v}_{L^2}. \]
\end{thm}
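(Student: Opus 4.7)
The plan is to pass through duality with $\text{BMO}$. For Schwartz inputs the formal adjoint identities
\[ \int P\cdot T(Q,v) = \int T^*(P,Q)\cdot v, \qquad \int P\cdot U(Q,v) = \int U^*(P,Q)\cdot v \]
hold by self-adjointness of $\ql$ (and by absorbing $\rz^*=-\rz$ into the sign convention of $U^*$). Since $\hardy(\R)^\ast=\text{BMO}(\R)$, the target estimate $\norm{T(Q,v)}_\hardy\ls\norm{Q}_\hmz\norm{v}_{L^2}$ is equivalent, modulo the routine extension of the BMO pairing to $\hardy$, to the bilinear $L^2$-bound
\[ \norm{T^*(P,Q)}_{L^2}\ls\norm{P}_{\text{BMO}}\norm{Q}_\hmz, \]
and similarly for $U^*$. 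Theorem \ref{tustarest} is the analogous statement with $\norm{P}_\hmz$ in place of $\norm{P}_{\text{BMO}}$ and the stronger target $L^{2,1}$: what I would need here is a Coifman--Rochberg--Weiss style upgrade of the $P$-variable to all of BMO, together with a weakening of the target from $L^{2,1}$ to $L^2$.

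I would establish the $L^2$ bound by a Littlewood--Paley paraproduct analysis parallel to the one underpinning Theorem \ref{tustarest}. Expanding $PQ=\Pi_1(P,Q)+\Pi_2(P,Q)+\Pi_3(P,Q)$ and doing likewise for the other two products appearing in $T^*$, the three contributions combine block-by-block in such a way that the $\ql$ symbol produces the cancellation $\abs{\xi}^{1/2}-\abs{\xi-\eta}^{1/2}-\abs{\eta}^{1/2}$. On the low--high and high--low paraproducts, the $j$-th block is frequency-localized in an annulus $\abs{\xi}\sim 2^j$ and this symbol identity delivers a quantitative gain over the worst individual term; combined with the $\hmz$-control $\sum_j 2^j\norm{Q_j}_{L^2}^2\ls\norm{Q}_\hmz^2$ and the John--Nirenberg bound on the Littlewood--Paley blocks of $P\in\text{BMO}$, a Cauchy--Schwarz in $j$ yields the required $L^2$ summability. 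The $U^*$ estimate runs in parallel: $\rz\ql$ has a symbol of the same homogeneity as $\ql$, so every symbol cancellation carries over verbatim, and continuity of $\rz$ on $L^2$ and $\hardy$ takes care of the Hilbert--Riesz transform at the end.

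The main obstacle will be the high--high diagonal paraproduct $\Pi_3$, where both factors live at the same scale and the combined frequency support is only the ball $\abs{\xi}\ls 2^{j+3}$, not an annulus; the off-diagonal symbol cancellation weakens there and one needs an additional device --- say, a Carleson-measure argument relating $P_j$ and $Q_j$ to the $\text{BMO}$ and $\hmz$ seminorms, or atomic summation tested against $\hardy$ functions --- to close the $L^2$ sum. As an alternative strategy bypassing duality altogether, I would instead apply the same paraproduct decomposition directly to $T(Q,v)$ and $U(Q,v)$ and control the grand maximal function $\sup_{t>0}\abs{\varphi_t\ast T(Q,v)}$ in $L^1(\R)$, which is closer to the techniques used in the Da Lio--Rivi\`ere line of work and circumvents the need to extend Theorem \ref{tustarest} to BMO.
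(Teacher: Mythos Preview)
Your duality strategy has a real gap at the very first step. You invoke $\hardy(\R)^*=\mathrm{BMO}(\R)$, but that is the wrong direction: knowing that $\int P\cdot T(Q,v)$ is bounded by $C\|P\|_{\mathrm{BMO}}$ for all nice $P$ does \emph{not} place $T(Q,v)$ in $\hardy$, because $\hardy$ is a predual of $\mathrm{BMO}$, not a dual. What you would need is the Coifman--Weiss duality $\mathrm{VMO}(\R)^*=\hardy(\R)$, together with the observation that Schwartz functions are dense in $\mathrm{VMO}$; once $T(Q,v)$ is known to lie in $\hardy$ for Schwartz inputs (which has to be checked separately), this recovers the $\hardy$ norm from the pairing. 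So the reduction is salvageable, but not via the duality you wrote.

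More seriously, even granting the correct duality, you have not proved the bound $\|T^*(P,Q)\|_{L^2}\ls\|P\|_{\mathrm{BMO}}\|Q\|_\hmz$: you yourself flag the diagonal paraproduct $\Pi_3$ as an unresolved obstacle and only gesture at possible fixes. As written, this is a plan, not a proof.

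For comparison, the paper takes a much shorter route. The $T$-estimate is obtained by a direct paraproduct argument on $T(Q,v)$ itself (carried out in \cite{DaL15}), essentially your ``alternative strategy'' of controlling the maximal function rather than going through duality. For $U$, the paper does not repeat the paraproduct analysis at all: it uses the algebraic identity
\[
U(Q,v)=-T(Q,\rz v)-F(\ql Q,\rz v)+\ql\Lambda(Q,\rz v),
\]
which reduces everything to the $T$-estimate just proved, the Coifman--Rochberg--Weiss bound $\|F(f,v)\|_\hardy\ls\|f\|_{L^2}\|v\|_{L^2}$ (Theorem~\ref{fest}), and the bound $\|\ql\Lambda(Q,w)\|_\hardy\ls\|Q\|_\hmz\|w\|_{L^2}$ underlying Theorem~\ref{lambdaest}. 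This identity-based reduction is cleaner than running a parallel paraproduct argument for $U$ and avoids the $\Pi_3$ difficulty entirely.
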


\begin{proof}
	For the estimate of $T(P,Q)$, we refer the reader to the proof of \cite[Theorem~1.3]{DaL15} (where one just replaces $\ql u$ with $v$). The estimate of $U(Q,v)$ can be achieved with a completely analogous proof. It also follows from the identity
	\[ U(Q,v)=-T(Q,\rz v)-F(\ql Q,\rz v)+\ql\Lambda(Q,\rz v) \]
	and Theorem \ref{fest}, together with the estimate $\bnorm{\ql\Lambda(Q,\rz v)}_\hardy\ls\norm{Q}_\hmz\norm{v}_{L^2}$ (see the proof of Theorem \ref{lambdaest}).
\end{proof}

The two following results now follow from Theorems \ref{tustarest} and \ref{tuest} by a duality argument.

\begin{corollary}\label{tustarinfty}
	If $P,Q\in\hmzb(\R)$, we have
	\[ \norm{T^*(P,Q)}_{L^2},\norm{U^*(P,Q)}_{L^2}\ls\norm{P}_\hmz\bnorm{\ql Q}_{L^{2,\infty}}. \]
\end{corollary}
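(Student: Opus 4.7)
The plan is to establish both inequalities via $L^2$-duality, reducing them to the Hardy-space bounds of Theorem~\ref{tuest} combined with the continuous embedding $\mql:\hardy(\R)\to L^{2,1}(\R)$ and the $L^{2,\infty}$--$L^{2,1}$ duality. The key observation is that the right-hand side features $\bnorm{\ql Q}_{L^{2,\infty}}$ rather than $\norm{Q}_{\hmz}$, so one must transfer $\ql$ off of $Q$ onto a test function in $L^{2,1}$; applying $\mql:\hardy\to L^{2,1}$ to $T(P,h)$ (or $U(P,h)$) produces exactly such a test function.

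For $T^*$, I would test against an arbitrary $h\in L^2(\R)$ with $\norm{h}_{L^2}\le 1$. Combining the adjoint identity $\int P\,T(Q,v)=\int T^*(P,Q)\,v$ (recorded at the start of Section~\ref{tcsec}) with the symmetry $T^*(P,Q)=T^*(Q,P)$ yields
\[ \int_\R T^*(P,Q)\,h \;=\; \int_\R Q\,T(P,h). \]
Theorem~\ref{tuest} gives $T(P,h)\in\hardy(\R)$ with $\norm{T(P,h)}_{\hardy}\ls\norm{P}_{\hmz}\norm{h}_{L^2}$, so that $X:=\mql T(P,h)$ lies in $L^{2,1}(\R)$, satisfies $\norm{X}_{L^{2,1}}\ls\norm{P}_{\hmz}\norm{h}_{L^2}$, and $\ql X=T(P,h)$. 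Self-adjointness of $\ql$ and the $L^{2,\infty}$--$L^{2,1}$ duality then give
\[ \babs{\int_\R Q\,T(P,h)} \;=\; \babs{\int_\R (\ql Q)\,X} \;\ls\; \bnorm{\ql Q}_{L^{2,\infty}}\,\norm{X}_{L^{2,1}} \;\ls\; \bnorm{\ql Q}_{L^{2,\infty}}\,\norm{P}_{\hmz}\,\norm{h}_{L^2}, \]
and taking the supremum over $h$ yields the $T^*$ inequality. The $U^*$ bound follows by the verbatim argument with $U,U^*$ in place of $T,T^*$: $U^*$ is manifestly symmetric in $(P,Q)$, the adjoint identity $\int P\,U(Q,v)=\int U^*(P,Q)\,v$ is also provided at the start of Section~\ref{tcsec}, and Theorem~\ref{tuest} furnishes the same Hardy-space bound on $U(P,h)$.

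The main technical hurdle is justifying the integration-by-parts identity $\int Q\,\ql X = \int(\ql Q)\,X$, since $Q\in\hmzb$ and $X\in L^{2,1}$ live at different regularity scales and the products/pairings must be read through the distributional framework of Section~\ref{functapp}. I would dispose of this by density: using Lemma~\ref{hmzapprox} together with Remark~\ref{linftyapprox}, approximate $Q$ by functions $Q_k\in\mathcal{S}(\R)$ with $\widehat{Q_k}\in C^\infty_c(\R\setminus\set{0})$, $Q_k\to Q$ in $\hmz$, and $\norm{Q_k}_{L^\infty}$ uniformly bounded. For such $Q_k$ the self-adjointness of $\ql$ is classical and all manipulations are rigorous; one then passes to the limit in the final inequality using the $L^{2,1}$-continuity of $Q\mapsto T^*(P,Q)$ from Theorem~\ref{tustarest} and the isometry $\ql:\hmz\to L^2$.
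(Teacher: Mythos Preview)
Your proposal is correct and takes essentially the same approach as the paper: duality against $L^2$ test functions, the symmetry $T^*(P,Q)=T^*(Q,P)$ combined with the adjoint identity to reduce to $\int Q\,T(P,v)$, then the self-adjointness step $\int Q\,T(P,v)=\int(\ql Q)\,\mql T(P,v)$ together with Theorem~\ref{tuest} and the embedding $\mql:\hardy\to L^{2,1}$. The only organizational difference is that the paper reduces to $P,Q,v\in\mathcal{S}(\R)$ from the outset (noting that $T^*$ vanishes whenever either argument is constant, so the additive constants $c_k$ from Lemma~\ref{hmzapprox} are harmless), which makes the self-adjointness step immediate rather than requiring the limiting argument you sketch at the end.
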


\begin{proof}
	Since $T(P,Q)$ vanishes if $P$ or $Q$ is constant,  we can assume that $P,Q\in \mathcal{S}(\R)$ (see Lemma \ref{hmzapprox} and Remark \ref{linftyapprox}). For any $v\in\mathcal{S}(\R)$
	\begin{align*} \int T^*(P,Q)v&=\int T^*(Q,P)v=\int QT(P,v)=\int \ql Q \mql T(P,v) \\
	&\ls\bnorm{\ql Q}_{L^{2,\infty}}\bnorm{\mql T(P,Q)}_{L^{2,1}}\ls\bnorm{\ql Q}_{L^{2,\infty}}\norm{T(P,Q)}_{\hardy} \\
	&\ls\bnorm{\ql Q}_{L^{2,\infty}}\norm{P}_\hmz\norm{v}_{L^2}, \end{align*}
	where we used Theorem \ref{tuest} and the fact that $\mql(\hardy(\R))\subseteq L^{2,1}(\R)$. A similar argument applies for $U^*$.
\end{proof}

\begin{corollary}\label{tuinfty}
	If $Q\in\hmzb(\R)$ and $v\in L^2(\R)$, we have $T(Q,v),U(Q,v)\in\hmmz(\R)$ and
	\[ \norm{T(Q,v)}_\hmmz,\norm{U(Q,v)}_\hmmz\ls\norm{Q}_\hmz\norm{v}_{L^{2,\infty}}. \]
\end{corollary}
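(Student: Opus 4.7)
The plan is to obtain Corollary \ref{tuinfty} by dualizing Theorem \ref{tustarest}, exploiting the pairings
\[ \int P\, T(Q,v) = \int T^*(P,Q)\, v, \qquad \int P\, U(Q,v) = \int U^*(P,Q)\, v \]
already recorded at the beginning of Section \ref{tcsec}, together with the duality $(L^{2,1})^* = L^{2,\infty}$ from Section \ref{functapp}. Since $\hmmz(\R)$ is canonically identified with the dual of $\hmz(\R)$ (modulo constants, which annihilate both $T$ and $U$ in the first slot), to establish the claimed $\hmmz$-bound it suffices to control $\int P\, T(Q,v)$ and $\int P\, U(Q,v)$, for an arbitrary Schwartz test function $P$, by $\norm{P}_\hmz\norm{Q}_\hmz\norm{v}_{L^{2,\infty}}$.

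The execution is as follows. First I would reduce, via Lemma \ref{hmzapprox} and Remark \ref{linftyapprox}, to the case in which $P, Q \in \mathcal{S}(\R)$ with $\widehat P, \widehat Q \in C^\infty_c(\R\setminus\{0\})$, together with density of Schwartz functions in $L^2$. For such regular data the self-adjointness of $\ql$ and the skew-adjointness of $\rz$ (their Fourier symbols $(2\pi\abs\xi)^{1/2}$ and $-i\sgn(\xi)$ being real even and imaginary odd) immediately yield the two duality identities by direct computation. Then Theorem \ref{tustarest} provides
\[ \norm{T^*(P,Q)}_{L^{2,1}},\ \norm{U^*(P,Q)}_{L^{2,1}} \ls \norm{P}_\hmz\norm{Q}_\hmz, \]
and pairing with $v$ via the $L^{2,1}$--$L^{2,\infty}$ duality produces
\[ \babs{\textstyle\int P\, T(Q,v)},\ \babs{\textstyle\int P\, U(Q,v)} \ls \norm{P}_\hmz\norm{Q}_\hmz\norm{v}_{L^{2,\infty}}. \]
Taking the supremum over Schwartz $P$ with $\norm{P}_\hmz \le 1$ realizes the $\hmmz$ norm and gives the desired estimate, first for smooth $Q,v$ and then, by approximation, for general $Q \in \hmzb$ and $v \in L^2$.

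A few remarks on what is and is not the hard part. Membership $T(Q,v), U(Q,v) \in \hmmz$ is essentially automatic from Theorem \ref{tuest} together with the inclusion $\mql(\hardy) \subseteq L^{2,1} \subseteq L^2$ recalled in Section \ref{functapp}; the substance of the corollary is that $\norm{v}_{L^2}$ can be replaced by the weaker quantity $\norm{v}_{L^{2,\infty}}$ on the right-hand side, and this genuinely requires the duality route just sketched. The step I expect to require the most care is the approximation of $(Q, v) \in \hmzb \times L^2$ by Schwartz functions, ensuring that both sides of the duality identities converge when tested against a fixed Schwartz $P$. This is manageable because $T, U, T^*, U^*$ are bilinear, because all constituent operators ($\ql$, $\rz$, pointwise multiplication by $P$) are continuous in the relevant topologies, and because the a priori bound we derive on the pairing is uniform in the approximating sequences.
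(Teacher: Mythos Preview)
Your proposal is correct and follows essentially the same duality argument as the paper's proof: reduce to Schwartz data, use the adjoint identity $\int P\,T(Q,v)=\int T^*(P,Q)v$ together with Theorem \ref{tustarest} and the $L^{2,1}$--$L^{2,\infty}$ pairing. The paper's version is just terser, omitting the discussion of the approximation details you spell out.
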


\begin{proof}
	Since $T(Q,v)$ vanishes when $Q$ is constant,   we can assume that $Q,v\in\mathcal{S}(\R)$. For any $P\in\mathcal{S}(\R)$ we get
	\[ \int PT(Q,v)=\int T^*(P,Q)v\ls\norm{T^*(P,Q)}_{L^{2,1}}\norm{v}_{L^{2,\infty}}\ls\norm{P}_\hmz\norm{Q}_\hmz\norm{v}_{L^{2,\infty}}, \]
	thanks to Theorem \ref{tustarest}. A similar argument applies for $U$.
\end{proof}

\begin{thm}\label{lambdaest}
	If $Q\in\hmzb(\R)$ and $v\in L^2(\R)$, we have $\Lambda(Q,v)\in L^{2,1}(\R)$ and
	\[ \norm{\Lambda(Q,v)}_{L^{2,1}}\ls\norm{Q}_\hmz\norm{v}_{L^2}. \]
\end{thm}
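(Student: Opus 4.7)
The plan is to show that $(-\Delta)^{1/4}\Lambda(Q,v)$ belongs to $\mathcal{H}^1(\R)$ with the right norm estimate, and then invoke the continuous embedding $\mql\colon\hardy(\R)\to L^{2,1}(\R)$ already recorded in the paper. To make sense of things at the start, note that for $Q\in\hmzb(\R)$ and $v\in L^2(\R)$ the products $Qv$ and $Q\rz v$ lie in $L^2(\R)$, hence $\Lambda(Q,v)\in L^2(\R)$ and $\ql\Lambda(Q,v)\in\hmmz(\R)$ is at least well-defined as a tempered distribution.

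The core of the proof is the following algebraic identity, which I would establish as the first step:
\begin{equation*}
	\ql\Lambda(Q,v)=T(Q,v)-U(Q,\rz v)+F(\ql Q,v).
\end{equation*}
To check it, I would expand each commutator according to its definition and reduce the right-hand side to $Q\,\ql v+\ql\rz(Q\rz v)-[\text{stuff}]$, using only two structural facts: that $\ql$ and $\rz$ commute as Fourier multipliers, and that $\rz^2=-\operatorname{id}$ holds on $L^2(\R)$ (and hence on $Qv, Q\rz v\in L^2$). The resulting cancellations are exactly the hint given in the excerpt after Theorem~\ref{tuest}, where the \emph{same} identity is used with $v$ replaced by $\rz v$ in the opposite direction to deduce the $U$-estimate from the $T$-estimate; so at this stage the identity has already been implicitly employed.

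Once the identity is in place, the three terms on the right-hand side are controlled as follows. By Theorem~\ref{tuest}, $T(Q,v)\in\hardy(\R)$ with
$\|T(Q,v)\|_\hardy\lesssim\|Q\|_\hmz\|v\|_{L^2}$, and likewise $U(Q,\rz v)\in\hardy(\R)$ with
$\|U(Q,\rz v)\|_\hardy\lesssim\|Q\|_\hmz\|\rz v\|_{L^2}\lesssim\|Q\|_\hmz\|v\|_{L^2}$, using that $\rz$ is bounded on $L^2$. The third term is handled by Theorem~\ref{fest}, which I take on faith from the paper: applied to $f:=\ql Q\in L^2(\R)$ (with $\|\ql Q\|_{L^2}\simeq\|Q\|_\hmz$ by Lemma~\ref{hmztol2}) and $v\in L^2(\R)$, it yields $F(\ql Q,v)\in\hardy(\R)$ with norm $\lesssim\|Q\|_\hmz\|v\|_{L^2}$.

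Summing the three contributions gives $\ql\Lambda(Q,v)\in\hardy(\R)$ with the correct bilinear estimate. Since $\Lambda(Q,v)\in L^2(\R)\subseteq\mathcal{S}'(\R)$ and $\ql\mql$ acts as the identity on $\hardy(\R)$ (recalling that $\ql$ is injective on $L^2$ and $\mql$ is defined by $c|x|^{-1/2}*\,\cdot\,$ on $\hardy$), we recover $\Lambda(Q,v)=\mql[\ql\Lambda(Q,v)]$, so the Riesz potential estimate $\mql(\hardy(\R))\subseteq L^{2,1}(\R)$ completes the proof with $\|\Lambda(Q,v)\|_{L^{2,1}}\lesssim\|Q\|_\hmz\|v\|_{L^2}$. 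The main obstacle I anticipate is precisely the bookkeeping in the identity: each cancellation rests on $\rz^2=-\operatorname{id}$, which is valid on $L^2$ but not on all of $\mathcal{S}'(\R)$, so one must argue carefully that all intermediate products (e.g.\ $Q\rz v$, $\rz(Q\rz v)$) live in $L^2(\R)$ before the identity is applied, and that $F(\ql Q,v)$ is defined in the sense of the paper's implicit product convention.
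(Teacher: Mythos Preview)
Your proposal is correct and follows the same overall strategy as the paper: establish $\ql\Lambda(Q,v)\in\hardy(\R)$ with norm $\lesssim\norm{Q}_\hmz\norm{v}_{L^2}$, then apply the Riesz potential mapping $\mql:\hardy(\R)\to L^{2,1}(\R)$. The paper obtains the Hardy estimate by direct citation of \cite[Lemma~B.5]{DLR11}, whereas you derive it from the algebraic identity $\ql\Lambda(Q,v)=T(Q,v)-U(Q,\rz v)+F(\ql Q,v)$ together with Theorems~\ref{tuest} and~\ref{fest}; your identity is indeed the one in the paper's proof of Theorem~\ref{tuest} (with $v$ replaced by $\rz v$), and the cancellation check is correct.

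One caveat worth making explicit: your argument invokes the $U$-part of Theorem~\ref{tuest}, and the paper's \emph{second} proof of that part goes precisely through the bound $\bnorm{\ql\Lambda(Q,\rz v)}_\hardy\lesssim\norm{Q}_\hmz\norm{v}_{L^2}$ you are proving. To avoid circularity you must rely on the paper's \emph{first} route for $U$ (the ``completely analogous proof'' to the $T$-estimate, via \cite{DaL15}), which is independent of Theorem~\ref{lambdaest}. With that choice your derivation is self-contained within the paper's framework and slightly more informative than the bare citation.
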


\begin{proof}
	By \cite[Lemma~B.5]{DLR11} we know that $\ql\Lambda(Q,v)\in\hardy(\R)$, with
	\[ \bnorm{\ql\Lambda(Q,v)}_\hardy\ls\norm{Q}_\hmz\norm{v}_{L^2}, \]and thus $\norm{\Lambda(Q,v)}_{L^{2,1}}=\norm{\mql\ql\Lambda(Q,v)}_{L^{2,1}}\ls\norm{Q}_\hmz\norm{v}_{L^2}$ (the statement of \cite[Lemma~B.5]{DLR11} contains a wrong sign and, correspondingly, in its proof the correct dual operator of $\rz\ql$ is $-\rz\ql$ instead of $\rz\ql$, but the proof is otherwise correct).
\end{proof}

The following inequality is due to Coifman--Rochberg--Weiss.

\begin{thm}\label{fest}
	If $f,v\in L^2(\R)$, we have $F(f,v)\in\hardy(\R)$ and
	\[ \norm{F(f,v)}_\hardy\ls\norm{f}_{L^2}\norm{v}_{L^2}. \]
\end{thm}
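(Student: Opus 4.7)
The plan is to argue by duality with $BMO(\R)$, reducing the problem to the classical commutator estimate of Coifman--Rochberg--Weiss for the Hilbert transform. First, by Cauchy--Schwarz and the fact that $\rz$ is an isometry on $L^2(\R)$, both $\rz f\cdot\rz v$ and $fv$ lie in $L^1(\R)$, hence $F(f,v)\in L^1(\R)$. By the Fefferman--Stein duality $\hardy(\R)^*=BMO(\R)$, in order to obtain $F(f,v)\in\hardy(\R)$ with the claimed norm bound it is enough to show that
\[ \left|\int_{\R}b\,F(f,v)\,dx\right|\ls\norm{b}_{BMO}\norm{f}_{L^2}\norm{v}_{L^2} \qquad\text{for every }b\in\mathcal{S}(\R), \]
where $\mathcal{S}(\R)$ is dense in $BMO(\R)$ modulo constants (and constants pair trivially with $L^1$ functions of zero integral; note that $\int F(f,v)=\ang{\rz f,\rz v}-\ang{f,v}=0$ since $\rz^*=-\rz$ and $\rz^2=-I$ on $L^2$).

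The central step is a simple algebraic manipulation. Using the antisymmetry $\rz^*=-\rz$ and the identity $\rz^2=-I$ on $L^2(\R)$, I compute
\[ \int_{\R} b\,\rz f\cdot\rz v\,dx=-\int_{\R}\rz(b\,\rz f)\cdot v\,dx=-\int_{\R}[\rz,b](\rz f)\cdot v\,dx+\int_{\R}bfv\,dx, \]
so that
\[ \int_{\R} b\,F(f,v)\,dx=-\int_{\R}[\rz,b](\rz f)\cdot v\,dx. \]
Now invoke the Coifman--Rochberg--Weiss commutator theorem from \cite{CRW76}: for any $b\in BMO(\R)$ the commutator $[\rz,b]$ extends to a bounded operator on $L^2(\R)$ with $\norm{[\rz,b]g}_{L^2}\ls\norm{b}_{BMO}\norm{g}_{L^2}$. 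Applying this with $g=\rz f$ and using Cauchy--Schwarz together with $\norm{\rz f}_{L^2}=\norm{f}_{L^2}$ gives
\[ \left|\int_{\R} b\,F(f,v)\,dx\right|\le\norm{[\rz,b](\rz f)}_{L^2}\norm{v}_{L^2}\ls\norm{b}_{BMO}\norm{f}_{L^2}\norm{v}_{L^2}, \]
which is exactly what is needed.

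The main obstacle, in the sense that it carries all the analytic content, is the Coifman--Rochberg--Weiss bound on $[\rz,b]$; the rest is formal manipulation and duality. An alternative route, avoiding explicit appeal to \cite{CRW76}, would be to decompose $f$ and $v$ via the Littlewood--Paley projections and exploit the cancellation between $\rz f\cdot\rz v$ and $fv$ in each paraproduct piece $\Pi_1,\Pi_2,\Pi_3$, verifying directly that each contribution lies in $\hardy(\R)$ with the correct bilinear estimate; this is heavier but self-contained and mirrors the paraproduct-based proofs of Theorems \ref{tuest} and \ref{lambdaest}.
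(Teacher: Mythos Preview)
Your argument is conceptually sound but there is a real gap in the duality step, and the route is quite different from the paper's.

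\textbf{The gap.} The claim that $\mathcal{S}(\R)$ is dense in $BMO(\R)$ is false: $BMO$ is not separable, so no countable (hence no $\mathcal{S}$-type) class is norm-dense. Knowing only that $F\in L^1$ and $\big|\int bF\big|\ls\|b\|_{BMO}$ for all $b\in\mathcal{S}$ does not, by itself, place $F$ in $\hardy$ via $(\hardy)^*=BMO$. The standard repair is to invoke the \emph{predual} side: $C^\infty_c$ is norm-dense in $VMO$ and $(VMO)^*=\hardy$, so your bound extends to a functional on $VMO$, hence corresponds to some $G\in\hardy$; one then checks $G=F$ using $F\in L^1$ and compatibility of the pairings. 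Alternatively, approximate $f,v$ by Schwartz functions in $L^2$, note that $F(f_n,v_n)\in\hardy$ for such nice data, obtain a uniform $\hardy$ bound, and pass to the limit. Either way, more than a one-line density remark is needed.

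\textbf{Comparison with the paper.} The paper avoids both $BMO$ duality and the CRW commutator theorem entirely. It proves directly, via a short Fourier computation, the identity
\[ \rz(fv-\rz f\,\rz v)=f\,\rz v+v\,\rz f, \]
which shows at once that both $F(f,v)$ and $\rz F(f,v)$ lie in $L^1$ with the right bilinear bound. The conclusion $F(f,v)\in\hardy$ then follows from the Riesz-transform characterization of $\hardy(\R)$ (an $L^1$ function whose Hilbert transform is also $L^1$ lies in $\hardy$). This is more elementary and fully self-contained: it needs neither Fefferman--Stein duality nor the CRW $L^2$ commutator bound. Your approach, by contrast, packages the cancellation into $[\rz,b]$ and then imports the CRW theorem as a black box; this is perfectly legitimate once the duality step is fixed, but it trades one nontrivial input (the identity above plus the Riesz characterization) for two heavier ones.
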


\begin{proof}
	The Hilbert--Riesz transform $\rz$ satisfies the identity $\rz(fv-\rz f\rz v)=f\rz v+v\rz f$: indeed, taking the Fourier transform at $\xi\in\R$, this amounts to say that
	\[ -i\sgn(\xi)\int(1+\sgn(\xi-\zeta)\sgn(\zeta))\hat{f}(\xi-\zeta)\hat{v}(\zeta)\,d\zeta=-i\int(\sgn(\xi-\zeta)+\sgn(\zeta))\hat f(\xi-\zeta)\hat{v}(\zeta). \]
	If $\xi>0$, $1+\sgn(\xi-\zeta)\sgn(\zeta)-\sgn(\xi-\zeta)-\sgn(\zeta)
	=(1-\sgn(\xi-\zeta))(1-\sgn(\zeta))$ vanishes identically (since either $\zeta>0$ or $\xi-\zeta>0$). On the other hand, if $\xi<0$, $1+\sgn(\xi-\zeta)\sgn(\zeta)+\sgn(\xi-\zeta)+\sgn(\zeta)=(1+\sgn(\xi-\zeta))(1+\sgn(\zeta))$ vanishes also identically (since either $\zeta<0$ or $\xi-\zeta<0$). In both cases we get
	\[ \sgn(\xi)(1+\sgn(\xi-\zeta)\sgn(\zeta))=\sgn(\xi-\zeta)+\sgn(\zeta) \]
	and the identity follows. Thus we have $\norm{F(f,v)}_{L^1}\ls\norm{f}_{L^2}\norm{v}_{L^2}$ and
	\[ \rz F(f,v)=-f\rz v-v\rz f\in L^1(\R),\quad\norm{\rz F(f,v)}_{L^1}\ls\norm{f}_{L^2}\norm{v}_{L^2}. \]
	The thesis follows from \cite[Theorem~2.4.6]{Gra14M}.
\end{proof}

\section{H\"older continuity of \texorpdfstring{$\mz$}{1/2}-harmonic maps}\label{holdereg}
In this section we obtain the H\"older continuity of $\mz$-harmonic maps on $\de S$ with values into (at least) $C^2$-smooth closed manifolds.
 
\begin{thm}\label{regulH}
	Let $\N\subset\R^m$ be a $C^k$-smooth closed embedded manifold, with $k\ge 2$, and let $u\in {H}^{1/2}(\de S,\N)$ be $\mz$-harmonic. Then $u$ is H\"older continuous.
\end{thm}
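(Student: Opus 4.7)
The plan is to reduce the problem to a nonlocal Schr\"odinger-type equation with antisymmetric potential on arcs identified with intervals of $\R$, and then apply the commutator technology of Section \ref{tcsec} combined with a H\'elein--Coulomb gauge construction, in the spirit of \cite{DLR11, MS17, DLS17}. First, using the conformal parametrizations $\psi_\ell\colon A^{(\ell)}\to A_{t_\ell}$ from Section \ref{decEn}, set $f_\ell:=u\circ\psi_\ell^{-1}\in H^{1/2}(\mathcal{S}^1,\N)$. By Lemma \ref{decnd}, the $\mz$-harmonic equation $P^T(u)\frac{\de\tilde u}{\de\nu}=0$ is equivalent, on each boundary component identified with $\mathcal{S}^1$, to
\[ P^T(f_\ell)\,(-\Delta)^{1/2}f_\ell=-P^T(f_\ell)\,R_\ell\bigl((f_j)_{j=1}^k\bigr), \]
whose right-hand side lies in $C^\infty(\mathcal{S}^1,\R^m)$ by Lemma \ref{decnd}. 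Covering $\mathcal{S}^1$ with small arcs and transplanting via a smooth chart to intervals of $\R$, we may work with the resulting equation on the line, modulo smooth cut-off errors controlled by $\norm{u}_{H^{1/2}(\de S)}$.

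Next, writing $(-\Delta)^{1/2}f_\ell=P^N(f_\ell)(-\Delta)^{1/2}f_\ell+(\text{smooth})$ and systematically swapping pointwise multiplication with $(-\Delta)^{1/2}$ via the three-term commutators $T,U$ and the two-term operators $\Lambda,F$, one should arrive at an identity of the form
\[ (-\Delta)^{1/4}f_\ell=\Omega\,(-\Delta)^{1/4}f_\ell+\mathcal{K}(f_\ell)+(\text{smooth remainder}), \]
with $\Omega\in\hmz\cap L^\infty$ taking values in the antisymmetric matrices $\mathfrak{so}(m)$, and $\mathcal{K}(f_\ell)$ built out of $T^*,U^*,\Lambda,F$ applied to quantities depending on $f_\ell$ through the smooth projection $P^N\colon\N\to\R^{m\times m}$. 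The antisymmetry of $\Omega$ is obtained by exploiting the geometric identity $dP^N(u)[X]\cdot Y+X\cdot dP^N(u)[Y]=0$ for $X,Y\in\R^m$, itself a consequence of $(P^N)^2=P^N$ and $(P^N)^T=P^N$; the same algebraic manipulation is carried out in the flat disk case in \cite{DLR11,MS17,DLS17}.

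Finally, Theorems \ref{tustarest}, \ref{tuest}, \ref{lambdaest}, \ref{fest} together with Corollaries \ref{tustarinfty}, \ref{tuinfty} control $\mathcal{K}(f_\ell)$ in $\hardy$ and $L^{2,1}$ in terms of $\norm{f_\ell}_\hmz$ on the localization patch. On arcs where the $\hmz$-energy of $f_\ell$ is sufficiently small (which is achievable by absolute continuity of the Gagliardo-type integral), a fractional H\'elein--Coulomb gauge argument based on the antisymmetry of $\Omega$ factorizes the potential and absorbs the term $\Omega\,(-\Delta)^{1/4}f_\ell$ into a contribution in $L^{2,\infty}$ plus a harmless remainder, yielding a Morrey-type decay
\[ \norm{(-\Delta)^{1/4}f_\ell}_{L^{2,\infty}(I_r(x_0))}\le Cr^{\delta} \]
for some $\delta>0$ on every sufficiently short interval $I_r(x_0)\subset\R$. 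Via the Besov/Triebel--Lizorkin characterization recalled in Section \ref{parasec}, this decay upgrades to $f_\ell\in C^{0,\alpha}_{\mathrm{loc}}$ for some $\alpha\in(0,1)$, and H\"older continuity of $u$ on $\de S$ follows by transporting back through $\psi_\ell$. The principal obstacle is the second step: organizing the expansion of $P^N(f_\ell)(-\Delta)^{1/2}f_\ell$ so that \emph{every} commutator remainder lands in one of the five structural types $T,U,T^*,U^*,\Lambda,F$ treated in Section \ref{tcsec}, while the surviving ``principal'' term is genuinely $\mathfrak{so}(m)$-valued; this bookkeeping, together with the fractional Coulomb gauge, form the technical heart of the argument.
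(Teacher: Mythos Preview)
Your outline follows essentially the same strategy as the paper: reduce to the line, rewrite $(-\Delta)^{1/2}w$ as an antisymmetric Schr\"odinger system in $v=(-\Delta)^{1/4}w$, remove the antisymmetric part via a fractional Coulomb gauge, and extract Morrey decay. Two technical points, however, deserve care.

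First, your passage from $\mathcal S^1$ to $\R$ via ``a smooth chart, modulo smooth cut-off errors'' is too casual. The operator $(-\Delta)^{1/2}$ is nonlocal and does not commute with generic diffeomorphisms up to a smooth error; the paper uses the \emph{stereographic projection} $\Pi$ (Lemma~\ref{circletoline}), whose conformality yields the exact identity $(-\Delta)^{1/2}w = \tfrac{2}{1+x^2}\bigl((-\Delta)^{1/2}(w\circ\Pi)\bigr)\circ\Pi^{-1}$. Without this (or an equivalent pseudodifferential argument), the transplanted equation does not sit in the precise form needed for the commutator machinery. Also, the identity you quote for antisymmetry is not right as stated: $dP^N(u)$ is a \emph{symmetric} matrix in each tangent direction, so $dP^N(u)[X]\cdot Y + X\cdot dP^N(u)[Y]=0$ fails. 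The antisymmetry actually used is simply $\Omega_0:=P^N(-\Delta)^{1/4}P^N-(-\Delta)^{1/4}P^N\,P^N\in\mathfrak{so}(m)$, which follows from $(P^N)^T=P^N$.

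Second, the global estimates of Section~\ref{tcsec} do not by themselves yield the local Morrey decay $\|v\|_{L^{2,\infty}(B_r)}\lesssim r^\delta$: one must localize the commutators, and this is where most of the work lies. The paper devotes Lemmas~\ref{loctstar} and~\ref{loct} to bounding $T^*(w;w)$ and $(-\Delta)^{-1/4}T(P,v)$ on a small ball in terms of dyadic-annular contributions of $\|(-\Delta)^{1/4}w\|_{L^{2,\infty}}$, with geometric decay in the annulus index; only then can one iterate (as in \cite[Lemma~A.8]{BRS16}) to the desired decay, and a further Adams-type step upgrades $L^{2,\infty}$-Morrey to $L^2$-Morrey and thence to $v\in L^p_{loc}$ for some $p>2$. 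Your sketch implicitly assumes this localization is routine; in practice it is the longest part of the argument.
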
 

The strategy of proof of Theorem \ref{regulH} is similar to the one used to get for 
the H\"older continuity of of $\mz$-harmonic maps defined in $\R$ (see \cite{DLR09, DLR11, Sch12}). We provide here the details for the reader's convenience.
The proof can be described (roughly speaking) by the following steps.

{\bf 1.}  By means of a stereographic projection we can reduce to a problem on $\R$, as it was already observed in \cite{DaL15,DaL17}.

{\bf 2.} We rewrite the Euler equation in $\R$ as a Schr\"odinger-type linear system with antisymmetric potential satisfied by $(-\Delta)^{1/4} w$ (where  $w:=u\circ\psi_\ell^{-1}\circ\Pi^{-1}$, $\Pi^{-1}$ being the inverse of the stereograhic projection.

{\bf 3.} We show that $(-\Delta)^{1/4}w\in L^{p}_{loc}(\R)$ for every $p\ge 1$, which implies  that $ u\in \bigcap_{0<\delta<1/2} C^{0,\delta}_{loc}.$\par
 
{\bf 4.} We show that $w\in H^{1,p}_{loc}(\R)$ for some $p>2$, which implies that $ u\in   C^{0,\delta}_{loc}(\R)$ for some $0<\delta<1$.

In Section \ref{higherreg} we will show that this holds for all $1<p<\infty$ and all $0<\delta<1$: actually we will obtain that $u\in\bigcap_{0<\delta<1}C^{k-1,\delta}(\de S)$.
 
\begin{lemmaen}\label{circletoline}
	Let $u\in H^{1/2}(\de S,\subman)$ be a $\mz$-harmonic map and let
	$\Pi:\mathcal S^1\setminus\{i\}\to\R$ be the stereographic projection. Then $w:=u\circ\psi_\ell^{-1}\circ\Pi^{-1}\in\hmz(\R,\subman)$ satisfies
	\begin{align}
		&P^T(w)\ml w+\frac{2}{1+x^2}P^T(w)\pa{R_\ell((f_j)_{j=1}^k)\circ\Pi^{-1}}=0 \qquad\text{in }\mathcal{D}'(\R),\label{harmeq}\\
		&P^N(w)\nabla w=0\qquad\text{in }\mathcal{D}'(\R).\label{horiz}
	\end{align}
\end{lemmaen}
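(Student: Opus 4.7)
My strategy is to transfer the $\mz$-harmonic equation from $\de S$ to $\R$ in two stages: first to $\mathcal S^1$ via the conformal chart $\psi_\ell$ at the $\ell$-th boundary component, and then to $\R$ via the stereographic projection $\Pi$.

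The starting point is the weak characterization \eqref{eq:hhfchar}, i.e. $P^T(u)\,\de_\nu \tilde u = 0$ on $\de S$. Restricting to $C^{(\ell)}$ and invoking Lemma \ref{decnd}, one can substitute $\de_\nu \tilde u = e^{\lambda_\ell}\bra{((-\Delta)^{1/2} u_\ell)\circ\psi_\ell + R_\ell((f_j)_{j=1}^k)\circ\psi_\ell}$, where $u_\ell := u\circ\psi_\ell^{-1}\in H^{1/2}(\mathcal S^1,\N)$. Since $e^{\lambda_\ell}>0$, dividing it out and precomposing with $\psi_\ell^{-1}$ would produce the intermediate $\mathcal S^1$-equation
\[ P^T(u_\ell)(-\Delta)^{1/2}_{\mathcal S^1} u_\ell + P^T(u_\ell) R_\ell((f_j)_{j=1}^k) = 0 \quad\text{in }\mathcal D'(\mathcal S^1). \]

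To pass from $\mathcal S^1$ to $\R$, I would extend $\Pi$ to a M\"obius transformation $\phi\colon\mathbb D\to\mathbb H$ (the upper half-plane), for which a direct calculation gives the boundary conformal factor $\abs{\phi'(e^{i\theta})}=\frac{1+x^2}{2}$ at $x=\Pi(e^{i\theta})$. By conformal invariance of the Laplacian, $\tilde w:=\tilde u_\ell\circ\phi^{-1}$ is the harmonic extension of $w$ to $\mathbb H$, and the standard transformation rule for outward normal derivatives under a conformal boundary diffeomorphism yields the key identity
\[ ((-\Delta)^{1/2}_{\mathcal S^1} u_\ell)\circ\Pi^{-1} = \frac{1+x^2}{2}\,(-\Delta)^{1/2}_\R w. \]
This can be verified by testing against smooth $\varphi$ and exploiting the conformal invariance of the Dirichlet integral $\int_{\mathbb D}\ang{\nabla\tilde u_\ell,\nabla\tilde\varphi}=\int_{\mathbb H}\ang{\nabla\tilde w,\nabla(\tilde\varphi\circ\phi^{-1})}$, combined with the change-of-variable factor $\frac{2}{1+x^2}$ between $d\theta$ and $dx$. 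Substituting this identity into the $\mathcal S^1$-equation above and multiplying through by $\frac{2}{1+x^2}$ gives precisely \eqref{harmeq}.

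For the tangency equation \eqref{horiz}, the idea is to use that $w(x)\in\N$ a.e.: letting $\pi$ denote the nearest-point projection from a tubular neighborhood of $\N$ onto $\N$ (so $d\pi=P^T$ on $\N$), applying the chain rule to the identity $w=\pi(w)$ formally gives $\nabla w = d\pi(w)\nabla w = P^T(w)\nabla w$, whence $P^N(w)\nabla w = 0$. The \textbf{main technical obstacle} is to make all these manipulations rigorous at the low regularity $w\in\dot H^{1/2}\cap L^\infty$, where $(-\Delta)^{1/2}w$ and $\nabla w$ are only distributions in $\dot H^{-1/2}$: both distributional products $P^T(w)(-\Delta)^{1/2}w$ and $P^N(w)\nabla w$ must be carefully defined via the algebra structure of $\dot H^{1/2}\cap L^\infty$ recalled in Section \ref{functapp}, and the conformal transfer has to be justified by approximating $u_\ell$ by smooth functions and passing to the limit in the duality pairings.
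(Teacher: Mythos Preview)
Your proposal is correct and follows essentially the same strategy as the paper: transfer the equation to $\mathcal S^1$ via Lemma \ref{decnd}, then to $\R$ via the M\"obius extension of $\Pi$, obtaining the key identity $\ml w=\frac{2}{1+x^2}(\ml(w\circ\Pi))\circ\Pi^{-1}$, and derive \eqref{horiz} from the constraint $w\in\N$ via the nearest-point projection. The paper carries out precisely the approximation arguments you flag as the main technical obstacle---using the density result of Lemma \ref{hmzapprox} (Schwartz functions plus constants) for the conformal transfer of $\ml$, and mollification $w_\epsilon=\rho_\epsilon*w$ followed by projection $p(w_\epsilon)$ onto $\N$ for the tangency equation.
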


\begin{proof}\par
\underline{Step 1.}  We first prove \eqref{harmeq}.

\underline{Claim:} $w\in\hmz(\R,\R^m)$ and $\ml w=\frac{2}{1+x^2}(\ml(w\circ\Pi))\circ\Pi^{-1}$ in distributional sense.

\underline{Proof  of  the claim:} let $\mathbb{D}:=\set{\abs{z}<1}$ and $\mathbb{H}:=\set{\Im z>0}$ be the standard unit disk and upper half-plane in $\C$ and notice that the map
\[ \tilde\Pi:\mathbb{D}\to\mathbb{H},\quad\tilde\Pi(z):=\pa{\frac{2}{z-i}-i} \]
is conformal, with trace $\Pi$ on $\mathcal S^1\setminus\set{i}$. Hence, by conformal invariance of the Dirichlet energy, this map gives a bijection between $H^1(\mathbb{D})$ and $\dot H^1(\mathbb{H}):=\{w\in W^{1,2}_{loc}(\mathbb{H}):\int_\mathbb{H}\abs{\nabla w}^2dx <\infty\}$.
Moreover, $\Pi$ gives a bijection between $H^{1/2}(\mathcal S^1)$ and $\hmz(\R)$: indeed, for a real measurable function $f$ on $\R$,
\begin{equation}\label{circorettanorme} \iint_{\R^2}\frac{\abs{f(x)-f(y)}^2}{\abs{x-y}^2}\,dx\,dy=\iint_{(\mathcal S^1)^2}\frac{\abs{f\circ\Pi(e^{i\theta})-f\circ\Pi(e^{i\tau})}^2}{\abs{e^{i\theta}-e^{i\tau}}^2}\,d\theta\,d\tau, \end{equation}
since $\abs{\Pi'(e^{i\theta})}=\frac{2}{\abs{e^{i\theta}-i}^2}$
and $\abs{\Pi(e^{i\theta})-\Pi(e^{i\tau})}^{-2}=\frac{\abs{e^{i\theta}-i}^2\abs{e^{i\tau}-i}^2}{4\abs{e^{i\theta}-e^{i\tau}}^2}$. In particular we get that  $w\in\hmz(\R,\R^m)$.
We infer that $\hmz(\R)$ is precisely the image of the trace of $\dot H^1(\mathbb{H})$ and that any $f\in\hmz(\R)$ is the trace of a unique harmonic map in $\dot H^1(\mathbb{H})$ (since the corresponding statements for the unit disk hold).

Given any $f\in C^\infty(\mathcal S^1)$, the normal derivative of its harmonic extension $\tilde f\in H^1(\mathbb{D},\R^m)$ at the boundary is given by $\frac{\de\tilde f}{\de\nu}=\ml f$, as is readily checked using the formula $\tilde f(re^{i\theta})=\sum_{n\in\Z}\hat f(n)r^{\abs n}e^{in\theta}$.
The same formula also shows that $\big\|\tilde f\big\|_{H^1(\mathbb{D})}=\norm{\ql f_\ell}_{L^2}$.
	
By Lemma \ref{hmzapprox}, $w$ can be approximated in $\mathcal{S}'(\R,\R^m)$ by a sequence $w_n=h_n+c_n\in\mathcal{S}(\R,\R^m)+\R^m$ such that $w_n\to w$ in $\hmz(\R,R^m)$ and in $\mathcal{S}'(\R,\R^m)$. The functions $f_n:=w_n\circ\Pi$ extend smoothly to all the circle. By conformality of $\tilde\Pi$, $\tilde w_n:=\tilde f_n\circ\tilde\Pi^{-1}$ is the unique harmonic extension of $w_n$ in $\dot H(\mathbb{H})$ and its normal derivative is
\[ \frac{\de\tilde w_n}{\de\nu}=\abs{\Pi'\circ\Pi^{-1}}^{-1}\frac{\de\tilde f_n}{\de\nu}\circ\Pi^{-1}=\frac{2}{x^2+1}\frac{\de\tilde f_n}{\de\nu}\circ\Pi^{-1}. \]
By uniqueness, $\tilde w_n(x+iy)=\int_{\R} e^{-2\pi y\abs{\xi}}e^{2\pi ix\xi}\,\hat h_n(\xi)\,d\xi$
and thus $\frac{\de\tilde w_n}{\de\nu}(x)=(-\Delta)^{1/2}w_n$.

From \eqref{circorettanorme} and \eqref{hmzs1}, $\ql f_n\to\ql(w\circ\Pi)$ in $L^2(\mathcal S^1,\R^m)$. Hence,
	\begin{align*} \ml w&=\lim_{k\to\infty}\ml w_n=\lim_{k\to\infty}\frac{\de\tilde w_n}{\de\nu}=\lim_{n\to\infty}\frac{2}{x^2+1}\frac{\de\tilde f_n}{\de\nu}\circ\Pi^{-1} \\
	&=\lim_{k\to\infty}\frac{2}{x^2+1}(\ml f_n)\circ\Pi^{-1}=\frac{2}{x^2+1}(\ml(w\circ\Pi))\circ\Pi^{-1} \end{align*}
	in the distributional sense.
	Therefore we can conclude that \eqref{harmeq} holds.\par

	\underline{Step 2.} Next we show \eqref{horiz}. To this aim let us fix a nonnegative bump function $\rho\in C^\infty_c(B(0,1))$ with $\int\rho=1$ and let $w_\epsilon:=\rho_\epsilon*w$, where $\rho_\epsilon:=\epsilon^{-1}\rho(\epsilon^{-1}\cdot)$. From \eqref{hmzfourier} it immediately follows that $w_\epsilon\to w$ in $\hmz(\R,\R^m)$, i.e.
	\begin{equation}\label{incrconv} \frac{w_\epsilon(x)-w_\epsilon(y)}{\abs{x-y}}\to\frac{w(x)-w(y)}{\abs{x-y}}\qquad\text{in }L^2(\R^2,\R^m). \end{equation}
	In particular, for some sequence $\epsilon_j\downarrow 0$ there exists $h\in L^2(\R^2)$ such that $\frac{\abs{w_{\epsilon_j}(x)-w_{\epsilon_j}(y)}}{\abs{x-y}}\le h(x,y)$ and $w_{\epsilon_j}\to w$ a.e. Moreover, since $\subman$ is a $C^2$ submanifold, there exists a neighborhood $U\supseteq\subman$ such that the map $p\in C^1(U,\subman)$, associating to $x\in U$ the unique nearest point $p(x)$ on $\subman$, is defined. Notice that
	$\dist(w_\epsilon,\subman)\to 0$ in $L^\infty(\R)$, as
	\begin{align*} \dist(w_\epsilon(x),\subman)^2&\le\int\abs{w_\epsilon(x)-w(x-z)}^2\rho_\epsilon(z)\,dz
	\le\iint\abs{w(x-y)-w(x-z)}^2\rho_\epsilon(y)\rho_\epsilon(z)\,dy\,dz \\
	&\ls\epsilon^{-2}\iint_{B(0,\epsilon)^2}\abs{w(x-y)-w(x-z)}^2\,dy\,dz
	\ls\iint_{B(x,\epsilon)^2}\frac{\abs{w(y)-w(z)}^2}{\abs{y-z}^2}\,dy\,dz, \end{align*}
	which converges to $0$ uniformly in $x$. Thus, eventually $p(w_{\epsilon_j})\in\hmz(\R,\subman)$ is defined. Since $P^N\circ p(w_{\epsilon_j})\nabla(p(w_{\epsilon_j}))=0$, it suffices to show that
	\[ P^N\circ p(w_{\epsilon_j})\to P^N\circ p(w)=P^N(w),\quad p(w_{\epsilon_j})\to p(w)=w \]
	in $\hmz(\R,\R^m)$. This immediately follows from \eqref{incrconv} by dominated convergence, since the maps $P^N\circ p$ and $p$ are Lipschitz (up to shrinking $U$).\par
	We finally remark that   $h:=-\frac{2}{1+x^2}P^T(w)\pa{R_\ell((f_j)_{j=1}^k)\circ\Pi^{-1}}$ lies in $L^1\cap L^\infty(\R,\R^m)$.
\end{proof}

 In the case  $w\in\hmz(\R,\subman)$, the quantitity $P^N{\ql w}$ enjoys special regularity properties. This has already been observed in 
  \cite{DLLR16,MS17}.
 
\begin{lemmaen}\label{PNprop}
	For any $w\in\hmz(\R,\subman)$ it holds
	\[ \abs{P^N\ql w}\ls\abs{T^*(w;w)}\quad\text{a.e.} \]
\end{lemmaen}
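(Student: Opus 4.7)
My plan rests on combining the integral representation of $\ql w$ from Lemma \ref{qlintrep} with the $C^2$-submanifold structure of $\subman$. The key geometric input is that, since $\subman$ is a compact embedded $C^2$ submanifold, there exists a constant $C>0$ such that
\[ \abs{P^N(p)(q-p)}\le C\abs{q-p}^2\qquad\text{for all }p,q\in\subman. \]
Near the diagonal this follows from representing $\subman$ locally as the graph of a $C^2$ function over $T_p\subman$, so that the normal component of $q-p$ is quadratic in $|q-p|$; away from the diagonal the bound is trivial by compactness. This estimate is what allows an apparent ``$1/|x-y|^{3/2}$'' singularity in the integrand of $\ql w$ to be upgraded to a better exponent after projecting onto the normal bundle.

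Interpreting $T^*(w;w)$ as the scalar $2w\cdot\ql w-\ql(\abs{w}^2)$ (the natural extension of $T^*(P,Q)=(\ql P)Q+P(\ql Q)-\ql(PQ)$ to vector-valued arguments through the Euclidean inner product), I would apply Lemma \ref{qlintrep} componentwise to $w$ and to $\abs{w}^2\in\hmz\cap L^\infty$, then multiply the first identity by the bounded factor $w(x)$ (which preserves the $L^2$ limit). The algebraic rearrangement
\[ 2w(x)\cdot(w(x)-w(y))-(\abs{w(x)}^2-\abs{w(y)}^2)=\abs{w(x)-w(y)}^2 \]
would then yield, in $L^2$-sense,
\[ T^*(w;w)(x)=c\lim_{\epsilon\to 0}J_\epsilon(x),\qquad J_\epsilon(x):=\int_{\abs{x-y}>\epsilon}\frac{\abs{w(x)-w(y)}^2}{\abs{x-y}^{3/2}}\,dy. \]
In particular $T^*(w;w)\ge 0$ a.e., and since $J_\epsilon(x)$ is monotone increasing as $\epsilon\downarrow 0$, its pointwise a.e. limit coincides with its $L^2$ limit $T^*(w;w)/c$.

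The final step would be to project the integral representation of $\ql w$ by $P^N(w(x))$ (again preserving the $L^2$ limit, as $P^N\circ w\in L^\infty$) and use the geometric estimate inside the integrand: letting
\[ K_\epsilon(x):=\int_{\abs{x-y}>\epsilon}\frac{P^N(w(x))(w(x)-w(y))}{\abs{x-y}^{3/2}}\,dy, \]
one has $\abs{K_\epsilon(x)}\le CJ_\epsilon(x)$ pointwise. Extracting a single sequence $\epsilon_k\downarrow 0$ along which $K_{\epsilon_k}\to c^{-1}P^N(w)\ql w$ and $J_{\epsilon_k}\to c^{-1}T^*(w;w)$ both pointwise a.e., one obtains $\abs{P^N(w)\ql w}\le CT^*(w;w)=C\abs{T^*(w;w)}$ a.e., which is the claim.

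The main (only) subtlety is the passage from the $L^2$ identities to a pointwise a.e. bound; I would handle this via the monotonicity of $J_\epsilon$ in $\epsilon$ (so that its pointwise a.e. limit is unambiguous and matches the $L^2$ limit on the full sequence) combined with the standard subsequence argument to get pointwise a.e. convergence of $K_{\epsilon_k}$. Apart from this measure-theoretic bookkeeping, the argument is essentially algebraic once the geometric estimate $\abs{P^N(p)(q-p)}\lesssim\abs{q-p}^2$ is in place.
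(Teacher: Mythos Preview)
Your proof is correct and follows essentially the same approach as the paper: the same geometric estimate $|P^N(p)(q-p)|\lesssim|q-p|^2$ from the $C^2$ structure of $\subman$, the same algebraic identity yielding $T^*(w;w)$ as a nonnegative integral via Lemma~\ref{qlintrep}, and the same subsequence argument to pass from $L^2$ convergence to a pointwise a.e.\ inequality. Your discussion of the monotonicity of $J_\epsilon$ is in fact slightly more careful than the paper's, which simply invokes a sequence $\epsilon_j\downarrow 0$ along which the $L^2$ limits from Lemma~\ref{qlintrep} hold pointwise a.e.
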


\begin{proof}
	Since $w$ takes values in the $C^2$ submanifold $\subman$, it holds
	\[ \abs{P^N(w(x))(w(x)-w(x+y))}\ls\abs{w(x)-w(x+y)}^2 \]
	and, in view of Lemma \ref{qlintrep}, we deduce that for some sequence $\epsilon_j\downarrow 0$
	\[ \abs{P^N(w)\ql w}(x)\ls\liminf_{j\to\infty}\int_{\R\setminus B(0,\epsilon_j)}\frac{\abs{w(x)-w(x+y)}^2}{\abs{y}^{3/2}}\,dy, \]
	\begin{align*} T^*(w;w)(x)&=\ql w\cdot w+w\cdot\ql w-\ql(w\cdot w) \\
	&=c\lim_{j\to\infty}\int_{\R\setminus B(0,\epsilon_j)}\frac{\abs{w(x)-w(x+y)}^2}{\abs{y}^{3/2}}\,dy, \end{align*}
	thanks to the identity (with $z:=x+y$)
	\[ (w(x)-w(z))\cdot w(x)+w(x)\cdot(w(x)-w(z))-(w(x)\cdot w(x)-w(z)\cdot w(z))=\abs{w(x)-w(z)}^2. \qedhere
	\]
\end{proof}

In what follows, given $x_0\in\R$ and $r>0$, we set $B:=B(x_0,r)$, $A_0:=B(x_0,2r)$ and, for $j\ge 1$, $A_j:=B(x_0,2^{j+1}r)\setminus B(x_0,2^jr)$. 
We give some preliminary estimates beforehand.
\begin{lemmaen}\label{tech2infty}
	For any $w\in\hmz(\R)$ and any $1\le p<\infty$ we have
	\begin{align}
	r^{-1/p}\norm{w-(w)_{B}}_{L^p(B)}&\ls\norm{w}_\hmz,\label{estpnorm}\\
	\begin{split}r^{-1/2}\norm{w-(w)_{B}}_{L^2(B)}&\ls r^{-3/4}\pa{\iint_{B^2}\frac{\abs{w(x)-w(y)}^2}{\abs{x-y}^{1/2}}\,dx\,dy}^{1/2} \label{halfnorm} \\
	&\ls\sum_{j=0}^\infty 2^{-j/2}\big\|\ql w\big\|_{L^{2,\infty}(A_j)}. \end{split}
	\end{align}
\end{lemmaen}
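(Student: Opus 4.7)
The three estimates are proved in order.  For \eqref{estpnorm} with $p=2$, I apply Jensen's inequality together with the crude bound $\abs{x-y}^2\le(2r)^2$ on $B\times B$ to obtain $\bnorm{w-(w)_B}_{L^2(B)}^2\le|B|^{-1}\iint_{B\times B}\abs{w(x)-w(y)}^2\,dx\,dy\ls r\norm{w}_\hmz^2$.  Since this bound holds on every ball, it yields $\norm{w}_{\mathrm{BMO}}\ls\norm{w}_\hmz$, and John--Nirenberg's inequality then extends the $L^p$ bound to every $p\in[1,\infty)$.

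The first inequality in \eqref{halfnorm} will follow from the same Jensen argument with $\abs{x-y}^{1/2}\le(2r)^{1/2}$ in place of $\abs{x-y}^2\le(2r)^2$.  For the second inequality, I first reduce to an $L^2$ estimate: using $\abs{w(x)-w(y)}^2\le 2\abs{w(x)-(w)_B}^2+2\abs{w(y)-(w)_B}^2$ together with $\int_B\abs{x-y}^{-1/2}\,dy\ls r^{1/2}$ uniformly in $x\in B$, one obtains $\iint_{B\times B}\abs{w(x)-w(y)}^2\abs{x-y}^{-1/2}\,dx\,dy\ls r^{1/2}\bnorm{w-(w)_B}_{L^2(B)}^2$, so it suffices to prove
\begin{equation}\label{planred} r^{-1/2}\bnorm{w-(w)_B}_{L^2(B)}\ls\sum_{j\ge 0}2^{-j/2}\bnorm{\ql w}_{L^{2,\infty}(A_j)}. \end{equation}

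For \eqref{planred}, I use Lemma \ref{qlintrep}: for $x,y\in B$, $w(x)-w(y)=c\int_\R(\abs{x-z}^{-1/2}-\abs{y-z}^{-1/2})\ql w(z)\,dz$; setting $w_j(x):=c\int_{A_j}\abs{x-z}^{-1/2}\ql w(z)\,dz$ one has (modulo constants) $w-(w)_B=\sum_{j\ge 0}(w_j-(w_j)_B)$.  For the far part ($j\ge 1$), since $\abs{x-z},\abs{y-z}\sim 2^jr$ for $x,y\in B$ and $z\in A_j$, the mean value theorem combined with the Lorentz-duality bound $\int_{A_j}\abs{\ql w}\le\bnorm{\chi_{A_j}}_{L^{2,1}}\bnorm{\ql w}_{L^{2,\infty}(A_j)}\ls(2^jr)^{1/2}\bnorm{\ql w}_{L^{2,\infty}(A_j)}$ yields $\abs{w_j(x)-w_j(y)}\ls\abs{x-y}(2^jr)^{-1}\bnorm{\ql w}_{L^{2,\infty}(A_j)}$; averaging $y$ over $B$ gives $r^{-1/2}\bnorm{w_j-(w_j)_B}_{L^2(B)}\ls 2^{-j}\bnorm{\ql w}_{L^{2,\infty}(A_j)}$.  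For the near part ($j=0$), I bound $\abs{w_0(x)}\le c(\abs{\cdot}^{-1/2}*g)(x)$ with $g:=\abs{\ql w}\chi_{A_0}$; a standard distribution-function computation gives $\norm{g}_{L^{4/3}}\ls r^{1/4}\bnorm{\ql w}_{L^{2,\infty}(A_0)}$, Hardy--Littlewood--Sobolev yields $\bnorm{w_0}_{L^4(\R)}\ls\norm{g}_{L^{4/3}}$, and a H\"older inequality on $B$ gives $\bnorm{w_0}_{L^2(B)}\le\abs{B}^{1/4}\bnorm{w_0}_{L^4(B)}\ls r^{1/2}\bnorm{\ql w}_{L^{2,\infty}(A_0)}$, whence $r^{-1/2}\bnorm{w_0-(w_0)_B}_{L^2(B)}\ls\bnorm{\ql w}_{L^{2,\infty}(A_0)}$.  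Summing the $j\ge 0$ estimates and using $2^{-j}\le 2^{-j/2}$ yields \eqref{planred}.

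The principal difficulty will be the near-field case $j=0$: the mean-value cancellation of the far case is unavailable, and a naive Lorentz-duality bound fails because $\abs{x-z}^{-1/2}$ is borderline in $L^{2,\infty}$, outside its dual $L^{2,1}$.  The Hardy--Littlewood--Sobolev route I propose sidesteps this endpoint by passing through the non-borderline pair $(L^{4/3},L^4)$, then paying back the lost integrability via H\"older on the small ball $B$.
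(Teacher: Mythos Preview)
Your argument is correct and follows a genuinely different, somewhat cleaner route than the paper's.

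For \eqref{estpnorm} the paper writes $w=c\abs{\cdot}^{-1/2}*v$ with $v\in\mathcal S(\R)$ (by density), splits $v$ into its restriction to $A_0$ and to the complement, and applies Young's inequality and the mean value theorem respectively to get the $L^p$ bound directly. Your BMO/John--Nirenberg approach is shorter and obtains all exponents $p$ in one stroke, at the price of invoking a (standard) outside result.

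For the second inequality in \eqref{halfnorm}, the paper bounds the middle term by the $h$-integral $\int_0^2\int_B h^{-1/2}\abs{w(x+h)-w(x)}^2\,dx\,dh$ and then decomposes $w(x+h)-w(x)$ via the Riesz potential into a near part (controlled by Young's inequality and $\norm{f_h}_{L^{4/3}}\ls h^{1/4}$) and a far part (mean value theorem and H\"older), integrating the resulting bounds in $h$. Your reduction is different: you first observe that the left-hand and middle terms of \eqref{halfnorm} are in fact comparable, so it suffices to prove \eqref{planred}; then you decompose the Riesz potential directly into dyadic annuli. This avoids the $h$-integration entirely and makes the two regimes (near $j=0$, far $j\ge 1$) explicit. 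Your Hardy--Littlewood--Sobolev step through $L^{4/3}\to L^4$ for $j=0$ plays exactly the same role as the paper's $f_h$ estimate, and the mean-value argument for $j\ge 1$ is essentially the same as the paper's far-field estimate.

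Two minor remarks. First, the representation $w(x)-w(y)=c\int(\abs{x-z}^{-1/2}-\abs{y-z}^{-1/2})\ql w(z)\,dz$ is the Riesz-potential identity $\mql=c\abs{\cdot}^{-1/2}*$ (stated in Section~\ref{functapp}), not Lemma~\ref{qlintrep}, which gives the singular-integral formula for $\ql$. Second, to justify that representation for an arbitrary $w\in\hmz(\R)$ you should insert the same density reduction to $v=\ql w\in\mathcal S(\R)$ that the paper uses at the outset of its proof.
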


\begin{proof}
	\underline{Proof of \eqref{estpnorm}:}
	by translating and rescaling, we can assume $x_0=0$ and $r=1$.
	Moreover,  we can suppose $w=\mql v=c\abs{x}^{-1/2}*v$ for some $v\in\mathcal{S}(\R)$.
	Letting $w_1:=\mql(v\uno_{A_0})$, $w_2:=\mql(v\uno_{\R\setminus A_0})$ and using Young's inequality, the mean value theorem and H\"older's inequality,
	\begin{align*} &\norm{w-(w)_{B}}_{L^p(B)}\ls\norm{w_1}_{L^p(B)}+\sup_{x,x'\in B}\abs{w_2(x)-w_2(x')} \\
	&\ls\norm{(\abs{x}^{-1/2}\uno_{B(0,3)})*(v\uno_{A_0})}_{L^p(B)}+\sup_{x,x'\in B}\int_{\R\setminus A_0}\abs{\abs{x-y}^{-1/2}-\abs{x'-y}^{-1/2}}\abs{v(y)}\,dy \\
	&\ls\norm{\abs{x}^{-1/2}}_{L^{2p/(p+2)}(B(0,3))}\norm{v}_{L^2(A_0)}+\int_{\R\setminus A_0}\abs{y}^{-3/2}\abs{v(y)}\,dy\ls\norm{v}_{L^2} \end{align*}
	(assuming without loss of generality $p\ge 2$), which proves the first part.
	
	\underline{Proof of \eqref{halfnorm}:}
	assuming again $x_0=0$ and $r=1$, by Jensen's inequality it follows
	\begin{equation}\label{est1} \begin{aligned}\norm{w-(w)_{B}}_{L^2(B)}^2&\ls\iint_{B^2}\abs{w(x)-w(y)}^2\,dx\,dy
	\ls\iint_{B^2}\frac{\abs{w(x)-w(y)}^2}{\abs{x-y}^{1/2}}\,dx\,dy \\
	&\ls\int_0^2\int_{B}\frac{\abs{w(x+h)-w(x)}^2}{h^{1/2}}\,dx\,dh. \end{aligned}\end{equation}
	We set $f_h(z):=(\abs{z+h}^{-1/2}+\abs{z}^{-1/2})\uno_{B(0,2h)}(z)$,
	\begin{align}\label{est2} \abs{w(x+h)-w(x)}&\ls\int\abs{\abs{x+h-y}^{-1/2}-\abs{x-y}^{-1/2}}\abs{v(y)}\,dy \nonumber \\
	&\ls f_h*\abs{v}(x)+\int_{\R\setminus B(x,2h)}\abs{\abs{x+h-y}^{-1/2}-\abs{x-y}^{-1/2}}\abs{v(y)}\,dy \\
	&\ls f_h*\abs{v}(x)+h\int_{\R\setminus B(x,2h)}\abs{x-y}^{-3/2}\abs{v(y)}\,dy,\nonumber  \end{align}
	where we used again the mean value theorem.
	Notice that, by Young's inequality,
	\begin{align} \begin{aligned} \label{est3}&\int_0^2 h^{-1/2}\int_{B}|f_h*\abs{v}(x)|^2\,dx\,dh=\int_0^2 h^{-1/2}\norm{f_h*(\abs{v}\uno_{B(0,5)})}_{L^2(B)}^2\,dh \\
	&\le\int_0^2 h^{-1/2}\norm{f_h}_{L^{4/3}}^2\norm{v}_{L^{4/3}(B(0,5))}^2\,dh\ls\norm{v}_{L^{2,\infty}(B(0,5))}^2, \end{aligned} \end{align}
	since $\norm{f_h}_{L^{4/3}}\ls h^{1/4}$. On the other hand, by H\"older's inequality,
	\begin{equation}\label{est4}\int_{A_0\setminus B(x,2h)}\abs{x-y}^{-3/2}\abs{v(y)}\,dy
	\ls\Big(\int_{2h}^\infty t^{-9/2}\,dt\Big)^{1/3}\norm{v}_{L^{3/2}(A_0)}\ls h^{-7/6}\norm{v}_{L^{2,\infty}(A_0)}, \end{equation}
	while, since $\abs{x-y}^{-3/2}\ls 2^{-3j/2}$ for $x\in B$ and $y\in A_j$ (when $j\ge 1$),
	\begin{equation}\label{est5} \int_{\R\setminus A_0}\abs{x-y}^{-3/2}\abs{v(y)}\,dy
	=\sum_{j=1}^\infty\int_{A_j}\abs{x-y}^{-3/2}\abs{v(y)}\,dy
	\ls\sum_{j=1}^\infty 2^{-j}\norm{v}_{L^{2,\infty}(A_j)}.  \end{equation}
	By combining \eqref{est1}--\eqref{est5} and by applying Cauchy--Schwarz inequality we conclude that
	\begin{align*} &\int_0^2\int_{B_1}\frac{\abs{w(x+h)-w(x)}^2}{h^{1/2}}\,dx\,dh
	\ls\norm{v}_{L^{2,\infty}(B(0,5))}^2+\int_0^2   h^{-5/6}\norm{v}_{L^{2,\infty}(A_0)}^2\,dh \\
	&+\int_0^2 h^{-1/2}h^2\Bigg(\sum_{j=1}^\infty 2^{-j}\norm{v}_{L^{2,\infty}(A_j)}\Bigg)^2\,dh
	\ls\norm{v}_{L^{2,\infty}(A_0)}^2+\sum_{j=1}^\infty 2^{-j}\norm{v}_{L^{2,\infty}(A_j)}^2. \end{align*}
	The thesis follows.
\end{proof}


\begin{lemmaen}\label{loctstar}
	Given $w\in\hmzb(\R,\R^m)$, we can estimate
	\begin{align*} \norm{T^*(w;w)}_{L^2(B)}&\ls\pa{\norm{w}_{\hmz(B(x_0,4r))}+\big\|\ql w\big\|_{L^{2,\infty}(B(x_0,4r))}}\big\|\ql w\big\|_{L^{2,\infty}(A_0)} \\
	&\quad+\sum_{j=1}^\infty 2^{-j/4}\pa{\norm{w}_{\hmz(B(x_0,4r))}+\bnorm{\ql w}_{L^{2,\infty}(A_j)}}\big\|\ql w\big\|_{L^{2,\infty}(A_j)}, \end{align*}
	where $\norm{w}_{\hmz(B(x_0,4r))}^2:=\iint_{B(x_0,4r)^2}\frac{\abs{w(x)-w(y)}^2}{\abs{x-y}^2}\,dx\,dy$.
\end{lemmaen}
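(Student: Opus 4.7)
The plan is to exploit the positive pointwise representation
$$T^\ast(w,w)(x)=c\int_\R\frac{|w(x)-w(y)|^2}{|x-y|^{3/2}}\,dy$$
established in the proof of Lemma \ref{PNprop}. Since $T^\ast$ annihilates any constant argument, I first replace $w$ by $v:=w-(w)_{B^\ast}$ with $B^\ast:=B(x_0,4r)$. This has the convenient Poincar\'e-type consequence $\|v\|_{L^2(B^\ast)}\ls r^{1/2}\|w\|_{\hmz(B^\ast)}$. I then dyadically split the $y$-integration as $T^\ast(v,v)=J_0+\sum_{j\ge 1}J_j$ with $J_j(x):=c\int_{A_j}|v(x)-v(y)|^2|x-y|^{-3/2}\,dy$, and estimate $\|J_0\|_{L^2(B)}$ and $\sum_j\|J_j\|_{L^2(B)}$ separately.

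For the near piece $J_0$, I would compare it to a \emph{global} $T^\ast$ after localization. Fix a cutoff $\chi\in C^\infty_c(B^\ast)$ with $\chi\equiv 1$ on $B(x_0,3r)\supset B\cup A_0$ and $|\chi'|\ls 1/r$. Since $\chi(x)=\chi(y)=1$ whenever $x\in B$ and $y\in A_0$, we have $J_0(x)\le c^{-1}T^\ast(\chi v,\chi v)(x)$ for all $x\in B$, so Corollary~\ref{tustarinfty} yields
$$\|J_0\|_{L^2(B)}\ls\|\chi v\|_\hmz\cdot\|\ql(\chi v)\|_{L^{2,\infty}}.$$
The first factor is bounded by $\|w\|_{\hmz(B^\ast)}$ by expanding $\chi v(x)-\chi v(y)=\chi(x)(v(x)-v(y))+v(y)(\chi(x)-\chi(y))$ in the Gagliardo seminorm and absorbing the cross term via $|\chi'|\ls 1/r$ together with the Poincar\'e bound above. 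For the second factor I use Lemma~\ref{qlintrep} to write $\ql(\chi v)=\chi\ql v+c\int\frac{(\chi(x)-\chi(y))v(y)}{|x-y|^{3/2}}\,dy$; the first term contributes $\|\ql w\|_{L^{2,\infty}(A_0)}$ (plus, outside $A_0$, pieces absorbed into the tail), while the second is pointwise controlled, via Cauchy--Schwarz and $|\chi(x)-\chi(y)|\ls\min(|x-y|/r,1)$, by a tail sum $\sum_{k\ge 1}(2^kr)^{-1/2}\|\ql w\|_{L^{2,\infty}(A_k)}$ through Lemma~\ref{tech2infty} at scale $2^kr$.

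For the far pieces $J_j$ with $j\ge 1$, I use $|x-y|\sim 2^jr$ for $x\in B$, $y\in A_j$ and the bound $|v(x)-v(y)|^2\le 2|v(x)|^2+2|v(y)|^2$ to get
$$J_j(x)\ls(2^jr)^{-1/2}|v(x)|^2+(2^jr)^{-3/2}\|v\|_{L^2(A_j)}^2.$$
Taking $L^2(B)$-norms and applying Lemma~\ref{tech2infty} at scale $r$ (for $\|v\|_{L^2(B)}$) and at scale $2^jr$, applied to the ball $B(x_0,2^{j+1}r)$ containing $A_j$ (for $\|v\|_{L^2(A_j)}$), each quantity is bounded by an $r^{1/2}$ or $(2^jr)^{1/2}$ times a weighted sum $\sum_k 2^{-k/2}\|\ql w\|_{L^{2,\infty}(\cdot)}$ of nearby $L^{2,\infty}$ norms. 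Writing $\|v\|_{L^2(A_j)}^2$ as the product of two heterogeneous estimates---one extracting a factor $\|\ql w\|_{L^{2,\infty}(A_j)}$, the other contributing the remainder $\|w\|_{\hmz(B^\ast)}+\|\ql w\|_{L^{2,\infty}(A_j)}$---and a final Cauchy--Schwarz in the dyadic index $j$ produce exactly the mixed form and the geometric factor $2^{-j/4}$ claimed in the statement.

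The main technical obstacle is obtaining the sharp mixed-norm structure $(\|w\|_{\hmz(B^\ast)}+\|\ql w\|_{L^{2,\infty}(A_j)})\|\ql w\|_{L^{2,\infty}(A_j)}$ with the precise decay rate $2^{-j/4}$: the naive pointwise estimates tend to give either a pure $\|w\|_{\hmz(B^\ast)}^2$ term (with good decay but wrong structure) or a pure $\|\ql w\|_{L^{2,\infty}(A_j)}^2$ term (with the desired decay $2^{-j/2}$ but again the wrong structure). Recovering the \emph{cross} term is what forces the interpolation step between the $\hmz$-type and $L^{2,\infty}$-type estimates at each scale, and is where the exponent $2^{-j/4}$ arises as the geometric mean of $2^{-j/2}$ and $2^0$.
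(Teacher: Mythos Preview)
Your overall architecture (subtract the mean, localize with a cutoff, apply Corollary~\ref{tustarinfty} for the near piece, and treat the tail by direct kernel estimates) matches the paper's. The near-piece analysis via $T^*(\chi v,\chi v)$ is essentially the paper's treatment of $T^*(\rho w_0,\rho w_0)$. The genuine problem lies in your far pieces.

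When you take the $L^2(B)$-norm of the first tail term $(2^jr)^{-1/2}|v(x)|^2$, you obtain $(2^jr)^{-1/2}\|v\|_{L^4(B)}^2$, \emph{not} $(2^jr)^{-1/2}\|v\|_{L^2(B)}^2$ as your reference to ``Lemma~\ref{tech2infty} at scale $r$ (for $\|v\|_{L^2(B)}$)'' suggests. The second estimate in Lemma~\ref{tech2infty} controls only $\|v\|_{L^2(B)}$ by a weighted sum of $\|\ql w\|_{L^{2,\infty}(A_k)}$; there is no analogous $L^4$ bound available. The best you can do for $\|v\|_{L^4(B)}^2$ is $r^{1/2}\|w\|_{\hmz(B^*)}^2$ (via a local John--Nirenberg argument), and after summing in $j$ this leaves a pure $\|w\|_{\hmz(B^*)}^2$ term. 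Such a term is \emph{not} dominated by the right-hand side of the lemma, every summand of which carries at least one factor of $\|\ql w\|_{L^{2,\infty}(A_j)}$. Your final paragraph correctly identifies this as the obstacle, but the ``interpolation step'' you invoke cannot act on $\|v\|_{L^4(B)}^2$: this quantity depends only on $w|_{B^*}$ and is completely decoupled from the annuli $A_j$, so no interpolation in $j$ can produce a factor $\|\ql w\|_{L^{2,\infty}(A_j)}$ from it.

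The paper circumvents this by splitting $T^*$ \emph{bilinearly} rather than splitting the kernel: writing $T^*(w_0,w_0)=T^*(\rho w_0,\rho w_0)+T^*((1-\rho)w_0,\rho w_0)+T^*(w_0,(1-\rho)w_0)$ keeps the two arguments separate. Corollary~\ref{tustarinfty} then gives $\|T^*(\rho w_0,\rho w_0)\|_{L^2}\ls\|\rho w_0\|_{\hmz}\,\|\ql(\rho w_0)\|_{L^{2,\infty}}$, and the second factor---not the first---is what is expanded as $\sum_p 2^{-p/4}\|\ql w\|_{L^{2,\infty}(A_p)}$. This is how the cross structure $\|w\|_{\hmz(B^*)}\cdot\|\ql w\|_{L^{2,\infty}(A_p)}$ appears; the quadratic-in-$\|\ql w\|_{L^{2,\infty}}$ terms come separately from the remaining two pieces via direct pointwise estimates on $\ql((1-\rho)|w_0|^2)$. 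Your crude bound $|v(x)-v(y)|^2\le 2|v(x)|^2+2|v(y)|^2$ destroys exactly this bilinear structure, and that is why the argument does not close.
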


\begin{proof}
	Again we can assume $x_0=0$, $r=1$.  Given $\rho\in C^\infty_c(B(0,3))$ with $\rho= 1$ on $B(0,2)$, we define $w_0:=w-(w)_{B(0,4)}$
	and observe that $T^*(w;w)=T^*(w_0;w_0)$, since $T^*$ vanishes when one of the arguments is constant, while $\norm{\rho w_0}_\hmz^2$ equals
	\begin{align*} \norm{\rho w_0}_\hmz^2
	&\ls\iint_{B(0,4)}\frac{\abs{\rho(x)w_0(x)-\rho(y)w_0(y)}^2}{\abs{x-y}^2}+\iint_{B(0,4)\times(\R\setminus B(0,4))}\frac{\abs{\rho(x)w_0(x)}^2}{\abs{x-y}^2} \\
	&\ls\iint_{B(0,4)}\frac{\abs{w_0(x)-w_0(y)}^2}{\abs{x-y}^2}+2^{-s}\int_{B(0,4)}\abs{w_0-(w_0)_{B(0,4)}}^2 \\
	&\ls\iint_{B(0,4)^2}\abs{w_0(x)-w_0(y)}^2(\abs{x-y}^{-2}+2^{-2s})\ls\norm{w}_{\hmz(B(0,4))}^2, \end{align*}
	where we split $\rho(x)w_0(x)-\rho(y)w_0(y)=\rho(x)(w_0(x)-w_0(y))+(\rho(x)-\rho(y))w_0(y)$ and used the fact that $(w_0)_{B(0,4)}=0$.
	Next we write 
	\[ T^*(w;w)=T^*(\rho w_0;\rho w_0)+T^*((1-\rho)w_0;\rho w_0)+T^*(w_0;(1-\rho)w_0), \]
	so that Corollary \ref{tustarinfty} gives
	\begin{align*} \norm{T^*(w;w)}_{L^2(B)}&\ls\norm{\rho w_0}_\hmz\big\|\ql(\rho w_0)\big\|_{L^{2,\infty}}+\norm{T^*((1-\rho)w_0;\rho w_0)}_{L^2(B)} \\
	&\quad+\norm{T^*(w_0;(1-\rho)w_0)}_{L^2(B)}. \end{align*}

	\underline{Estimate of $\norm{\rho w_0}_\hmz\big\|\ql(\rho w_0)\big\|_{L^{2,\infty}}$:}
	by the above $\norm{\rho w_0}_\hmz\ls\norm{w}_{\hmz(B(0,4))}$. Also,
	\begin{equation}\label{rhow0}
	\bnorm{\ql(\rho w_0)}_{L^{2,\infty}(\R)}^2\ls\bnorm{\rho\ql w}_{L^{2,\infty}(\R)}^2+\int\abs{\int\frac{(\rho(x)-\rho(y))w_0(y)}{\abs{x-y}^{3/2}}\,dy}^2\,dx \end{equation}
	(see Lemma \ref{qlintrep}). It suffices to bound the last term of \eqref{rhow0}. Splitting $(\rho(x)-\rho(y))w_0(y)=-(\rho(x)-\rho(y))(w_0(x)-w_0(y))+(\rho(x)-\rho(y))w_0(x)$ and using Cauchy--Schwarz, as well as $\abs{\rho(x)-\rho(y)}\ls \abs{x-y}$,
	\begin{align*} &\int_{B(0,4)}\abs{\int_{B(0,4)}\frac{(\rho(x)-\rho(y))w_0(y)}{\abs{x-y}^{3/2}}\,dy}^2\,dx\ls\norm{w_0}_{L^2(B(0,4))}^2 \\
	&\quad+ \int_{B(0,4)}\pa{\int_{B(0,4)}\frac{\abs{\rho(x)-\rho(y)}^2}{\abs{x-y}^{5/2}}\,dy}\pa{\int_{B(0,4)}\frac{\abs{w(x)-w(y)}^2}{\abs{x-y}^{1/2}}\,dy}\,dx
	 \\
	&\ls \norm{w_0}_{L^2(B(0,4))}^2+\iint_{B(0,4)}\frac{\abs{w(x)-w(y)}^2}{\abs{x-y}^{1/2}}\,dx\,dy. \end{align*}
	Moreover,
	\begin{align*} \int_{\R\setminus B(0,4)}\abs{\int_{B(0,4)}-\frac{\rho(y)w_0(y)}{\abs{x-y}^{3/2}}\,dy}^2\,dx
	&\ls \int_{B(0,3)}\Bigg(\int_{\R\setminus B(0,4)}\frac{\abs{w_0(y)}^2}{\abs{x-y}^3}\,dx\Bigg)\,dy \\
	&\ls \norm{w_0}_{L^2(B(0,4))}^2. \end{align*}
	
	Now we use   the elementary inequality 
	\begin{align*} \norm{w_0}_{L^1(B(0,2^{j}))}&\ls\bnorm{w_0-(w_0)_{B(0,2^{j})}}_{L^1(B(0,2^{j}))}+\sum_{\ell=3}^{j} 2^j \big|(w_0)_{B(0,2^\ell)}-(w_0)_{B(0,2^{\ell-1})}\big|
	\\&
	\ls\sum_{\ell=2}^{j} 2^{j-\ell/2}\bnorm{w-(w)_{B(0,2^\ell)}}_{L^2(B(0,2^\ell))}
	\end{align*}
	(for $j\ge 2$) and we get 
	\begin{align*} &\pa{\int_{B(0,4)}\abs{\int_{\R\setminus B(0,4)}\frac{\rho(x)w_0(y)}{\abs{x-y}^{3/2}}\,dy}^2\,dx}^{1/2}
	\ls\sum_{j=2}^\infty 2^{-3j/2}\norm{w_0}_{L^1(A_j)} \\
	&\ls\sum_{j=2}^\infty\sum_{\ell=2}^{j+1} 2^{-j/2-\ell/2}\bnorm{w_0-(w_0)_{B(0,2^\ell)}}_{L^2(B(0,2^\ell))}\ls\sum_{\ell=s}^\infty 2^{-\ell}\bnorm{w_0-(w_0)_{B(0,4)}}_{L^2(B(0,2^\ell))}. \end{align*}
	Thus, applying Lemma \ref{tech2infty} to $B(0,4)$ and $B(0,2^\ell)$, we get 
	\begin{align*} &\bnorm{\ql(\rho w_0)}_{L^{2,\infty}}\ls \bnorm{\ql w}_{L^{2,\infty}(B(0,3))}\\
	&+\sum_{\ell=2}^\infty 2^{-\ell/2}\Bigg(\sum_{p=0}^{\ell+1}\bnorm{\ql w}_{L^{2,\infty}(A_p)}+\sum_{p=\ell+2}^\infty 2^{(\ell+1-p)/2}\bnorm{\ql w}_{L^{2,\infty}(A_p)}\Bigg) \\
	& \ls\sum_{p=0}^\infty (p+1)2^{-p/2}\bnorm{\ql w}_{L^{2,\infty}(A_p)}\ls \sum_{p=0}^\infty 2^{-p/4}\bnorm{\ql w}_{L^{2,\infty}(A_p)}. \end{align*}
	
	\underline{Estimate of  $\norm{T^*(w_0;(1-\rho)w_0)}_{L^2(B)}$:}
	by Lemma \ref{qlintrep} we have
	\begin{align*} &\bnorm{w_0\cdot\ql((1-\rho)w_0)}_{L^2(B)}\ls\norm{w_0-(w_0)_{B(0,4)}}_{L^2(B(0,4))}\bnorm{\ql((1-\rho)w_0)}_{L^\infty(B)} \\
	&\ls \norm{w}_{\hmz(B(0,4))}\sum_{j=1}^\infty 2^{-3j/2}\norm{w_0}_{L^1(A_j)}\\
	&\ls \norm{w}_{\hmz(B(0,4))}\sum_{p=0}^\infty 2^{-p/4}\bnorm{\ql w}_{L^{2,\infty}(A_p)},\end{align*}
	where the last inequality is obtained as before. Hence, $\norm{w_0\cdot\ql((1-\rho)w_0)}_{L^2(B)}$ has the desired upper bound.
	 Similarly, using Cauchy--Schwarz inequality twice,
	\begin{align*} &\bnorm{\ql((1-\rho)\abs{w_0}^2)}_{L^2(B)}\ls \bnorm{\ql((1-\rho)\abs{w_0}^2)}_{L^{\infty}(B)}\\& \ls \sum_{j=1}^\infty 2^{-3j/2}\norm{w_0}_{L^2(A_j)}^2 \\
	&\ls \sum_{j=1}^\infty 2^{-3j/2}\pa{\sum_{\ell=2}^{j+1} 2^{j/2-\ell/2}\bnorm{w_0-(w_0)_{B(0,2^\ell)}}_{L^2(B(0,2^\ell))}}^2 \\
	&\ls \sum_{j=1}^\infty\sum_{\ell=2}^{j+1} 2^{-j/2-\ell}\ell^2\bnorm{w_0-(w_0)_{B(0,2^\ell)}}_{L^2(B(0,2^\ell))}^2 \\
	&\ls \sum_{\ell=2}^\infty 2^{-3\ell/2}\ell^2\bnorm{w_0-(w_0)_{B(0,2^\ell)}}_{L^2(B(0,2^\ell))}^2 \\
	&\ls\sum_{\ell=2}^\infty 2^{-\ell/2}\ell^2\Bigg(\sum_{p=0}^{\ell+1} (p+1)^2\bnorm{\ql w}_{L^{2,\infty}(A_p)}^2+\sum_{p=\ell+2}^\infty (p+1)^2 2^{\ell-p}\bnorm{\ql w}_{L^{2,\infty}(A_p)}^2\Bigg) \\
	&\ls\sum_{p=0}^\infty 2^{-p/4}\bnorm{\ql w}_{L^{2,\infty}(A_p)}^2. \end{align*}
	 
	\underline{Estimate of $T^*((1-\rho)w_0;\rho w_0)=T^*(\rho w_0;(1-\rho)w_0)$:} analogous.
\end{proof}

\begin{lemmaen}\label{loct}
Let  $P\in\hmzb(\R)$ and $v\in L^2(\R)$. Then, uniformly in $s\ge 1$,
	\begin{align*} \bnorm{\mql T(P,v)}_{L^2(B)}&\ls\norm{v}_{L^{2,\infty}(B(0,2^s))}\sum_{j=s}^\infty 2^{s/2-j/4}\norm{P}_{\hmz(B(0,2^jr))}\\
	&\quad+\norm{P}_\hmz\sum_{j=s}^\infty 2^{-j/4}\norm{v}_{L^{2,\infty}(A_j)}. \end{align*}
\end{lemmaen}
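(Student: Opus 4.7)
By translation and scaling I may assume $x_0 = 0$ and $r = 1$; set $B_\ell := B(0, 2^\ell)$ and $A_\ell := B_{\ell+1} \setminus B_\ell$. Since $T(\cdot, v)$ vanishes on constants, I replace $P$ by $P_0 := P - (P)_{B_s}$. Fixing a smooth cutoff $\chi \in C_c^\infty(B_{s+1})$ with $\chi = 1$ on $B_s$ and $\|\nabla\chi\|_\infty \ls 2^{-s}$, bilinearity of $T$ yields the splitting
\[
T(P_0, v) = T(\chi P_0, \chi v) + T(\chi P_0, (1-\chi) v) + T((1-\chi) P_0, v),
\]
isolating a fully local piece together with two remote pieces whose relevant distances $|x-y|$ are of order $\gtrsim 2^s$ when $x \in B$. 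The structure is parallel to the one exploited in Lemma~\ref{loctstar}.

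The local piece is handled by the global bound of Corollary~\ref{tuinfty}, $\|T(\chi P_0, \chi v)\|_\hmmz \ls \|\chi P_0\|_\hmz\|\chi v\|_{L^{2,\infty}}$, which combined with the isometry $\mql:\hmmz \to L^2$ and with the cutoff/Poincar\'e estimate $\|\chi P_0\|_\hmz \ls \|P\|_{\hmz(B_{s+1})}$ (proved exactly as in Lemma~\ref{loctstar}) provides the leading $j = s$ term of the first sum. For the two remote pieces I work directly with the pointwise representation
\[
T(P, v)(x) = c\int\frac{(P(x)-P(y))(v(x)+v(y))}{|x-y|^{3/2}}\,dy,
\]
obtained by summing the three identities of Lemma~\ref{qlintrep}. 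For $x \in B$ one has $(1-\chi)(x) = 0$ and $\chi(x) = 1$, so both remote terms reduce to explicit integrals whose integrands are supported outside $B_{s+1}$; a dyadic decomposition over the annuli $A_j$ with $j \ge s$, together with the kernel bound $|x-y|^{-3/2} \sim 2^{-3j/2}$ on $B \times A_j$, turns each piece into a sum of convolution-type integrals.

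Each annular contribution is estimated via Cauchy--Schwarz or Lorentz--H\"older duality combined with the elementary inequality $\|v\|_{L^1(A_j)} \ls 2^{j/2}\|v\|_{L^{2,\infty}(A_j)}$, and a telescoping Poincar\'e-type estimate in the spirit of Lemma~\ref{hmzannest}, which bounds $\|P - (P)_{B_s}\|_{L^2(B_j)}$ by a sum of $\|P\|_{\hmz(B_\ell)}$ over $s \le \ell \le j$ with geometric weights. The operator $\mql$ is absorbed either pointwise, by estimating the iterated kernel $|x - z|^{-1/2}\,|z - y|^{-3/2}$ on $B$, or more cleanly by duality against $\phi \in L^2(B)$: using $\|\mql\phi\|_\hmz = \|\phi\|_{L^2}$ and the pointwise decay $|\mql\phi(x)| \ls |x|^{-1/2}\|\phi\|_{L^2}$ away from $B$, one recovers the scaling factor $2^{s/2}$ in the first sum, reflecting the $\mql$-action on a function concentrated at scale $2^s$. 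The main difficulty lies in the careful dyadic bookkeeping required to extract the exact weights $2^{s/2 - j/4}$ and $2^{-j/4}$: the margin $2^{-j/4}$ arises from balancing the $2^{-3j/2}$ decay of the kernel against the $2^{j/2}$ gain from Cauchy--Schwarz and the geometric losses $2^{(j-\ell)/2}$ coming from the Poincar\'e telescoping. The cross term $T(\chi P_0, (1-\chi) v)$ is especially delicate because its cutoffs overlap on the transition annulus $A_s$, where the distances $|x - y|$ for $x \in B$ are precisely of order $2^s$; this is exactly the scale at which the two sums on the right-hand side match up.
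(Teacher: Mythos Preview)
Your decomposition and the treatment of the local piece via Corollary~\ref{tuinfty} are fine, and indeed parallel the paper's argument. However, the handling of the remote pieces has a genuine gap that your sketch does not address.

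You propose to estimate the remote pieces of $T$ \emph{pointwise on $B$} via the integral representation, using that the cutoffs take definite values there. This gives good control of $T((1-\chi)P_0,v)(x)$ and $T(\chi P_0,(1-\chi)v)(x)$ for $x\in B$. But to compute $\mql T$ on $B$ you need $T$ on all of $\R$, since $\mql=c\,|x|^{-1/2}*(\cdot)$ is nonlocal. Your pointwise simplification breaks down as soon as $x\notin B_s$: for instance $(1-\chi)P_0(x)$ is no longer zero, so the integrand is no longer supported at distance $\gtrsim 2^s$ from $x$, and the term $(1-\chi)P_0\,\ql v$ reappears (with $\ql v$ only in $\hmmz$, so Lemma~\ref{qlintrep} does not apply). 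Your references to the ``iterated kernel'' and to duality do not bridge this: the iterated kernel still runs over all $z\in\R$, and a naive duality pairing $\int(\mql\phi)T((1-\chi)P_0,v)$ followed by Corollary~\ref{tuinfty} yields only the global $\|v\|_{L^{2,\infty}(\R)}$, not the annular decomposition you need.

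The paper circumvents this by a different decomposition. Instead of cutting $P$ and $v$ simultaneously near $B_s$, it first splits $v=v\uno_{B(0,2^s)}+\sum_{j\ge s}v\uno_{A_j}$ with \emph{sharp} cutoffs, and applies a smooth cutoff $\rho$ only to $P_0$ in the first summand. The term $T((1-\rho)P_0,v\uno_{B(0,2^s)})$ has both arguments with disjoint supports, so it equals $\ql((1-\rho)P_0)v\uno_{B(0,2^s)}-(1-\rho)P_0\ql(v\uno_{B(0,2^s)})$, which can be estimated on every annulus $A_\ell$ directly. For each far piece $T(P_0,v\uno_{A_j})$ the paper introduces a \emph{second} cutoff $\chi_j$ supported near $A_j$ (not near $B$), and splits $T(P_0,v\uno_{A_j})=(1-\chi_j)T(P_0,v\uno_{A_j})+\chi_jT(P_0,v\uno_{A_j})$. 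On the support of $1-\chi_j$ the functions $\ql(v\uno_{A_j})$ and $\ql(P_0v\uno_{A_j})$ are smooth with quantitative decay, so one can bound $(1-\chi_j)T(P_0,v\uno_{A_j})$ in $L^2(A_\ell)$ for every $\ell$ and then apply $\mql$ by convolution. The piece $\chi_jT(P_0,v\uno_{A_j})$, supported near $A_j$, is handled by duality together with the key bound $\|\ql(\chi_j\mql\phi)\|_{L^2}\ls 2^{-j/2}\|\phi\|_{L^2(B)}$, which is where the gain $2^{-j/2}$ (and after summation the weight $2^{-j/4}$) actually comes from. This annulus-by-annulus control of $T$ on the whole line is precisely the missing ingredient in your plan.
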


\begin{proof}
	The proof is similar to the one of Lemma \ref{loct}, but is substantially simpler: as before
	we assume $x_0=0$, $r=1$. Setting $P_0:=P-(P)_{B(0,2^{s+3})}$, notice that $T(P,v)=T(P_0,v)$. \par Let $\rho\in C^\infty_c(B(0,2^{s+2}))$ with $\rho=1$ on $B(0,2^{s+1})$ and $\abs{\rho'}\ls 2^{-s}$. By Corollary \ref{tuinfty} we can write
	\begin{align*} \mql T(P,v)=&\mql T(\rho P_0,v\uno_{B(0,2^s)})+\mql T((1-\rho)P_0,v\uno_{B(0,2^s)}) \\
	&+\sum_{j=s}^\infty\mql T(P_0,v\uno_{A_j}) \end{align*}
	in $L^2(\R)$ and as before
	\[ \bnorm{\mql T(\rho P_0,v\uno_{B(0,2^s)})}_{L^2}\ls\norm{\rho P_0}_\hmz\norm{v}_{L^{2,\infty}(B(0,2^s))}\ls\norm{P}_{\hmz(B(0,2^{s+3}))}\norm{v}_{L^{2,\infty}(B(0,2^s))}. \]
	To estimate the two other pieces, fix any $j\ge 1$ and let
	$\chi_j\in C^\infty_c(B(0,2^{j+2}))$ with
	\[ \chi_j=1\quad\text{on }B(0,2^{j+1})\setminus B(0,(5/6)2^j),\qquad\chi_j= 0\quad\text{on }B(0,(4/6)2^j)\cup(\R\setminus B(0,2^{j+3})) \]
	and $\bnorm{\chi_j'}_{L^\infty}\ls 2^{-j}$. In particular, $\chi_j$ vanishes in a neighborhood of $\bar B$.
	Next we are going to use \cite[Proposition~2.4.8]{Gra14C} (which implies that, on $\R\setminus \bar{A_j}$, $\ql(P_0v\uno_{A_j})$ is smooth and bounded by $\abs{x}^{-3/2}*\abs{P_0v\uno_{A_j}}$) and the fact that,\footnote{Since $(P_0)_{B(0,4)}=0$, if $\ell\ge s+2$ we can write
	$P_0=P_0-(P_0)_{B(0,2^{\ell+1})}+\sum_{p=s+4}^{\ell+1}((P_0)_{B(0,2^{p})}-(P_0)_{B(0,2^{p-1})})$ and if $\ell\le s+2$ we write
	$P_0=P_0-(P_0)_{B(0,2^{\ell+1})}+\sum_{p=\ell+2}^{s+3}((P_0)_{B(0,2^{p})}-(P_0)_{B(0,2^{p-1})})$.} by Lemma \ref{tech2infty}, 
	\[ \norm{P_0}_{L^4(A_\ell)}\ls\sum_{p=0}^{\max(\ell+1,s+3)}2^{\ell/4-p/4}\bnorm{P_0-(P_0)_{B(0,2^p)}}_{L^4(B(0,2^p))}\ls\max(\ell+1,s)2^{\ell/4}\norm{P_0}_\hmz. \]
	We split $T(P_0,v\uno_{A_j})=(1-\chi_j)T(P_0,v\uno_{A_j})+\chi_jT(P_0,v\uno_{A_j})$. For all $\ell\ge 0$ we have
	\begin{align*} &\norm{(1-\chi_j)T(P_0,v\uno_{A_j})}_{L^2(A_\ell)} \\
	&\ls 2^{\ell/2}\bnorm{(1-\chi_j)\ql(P_0v\uno_{A_j})}_{L^\infty(A_\ell)}+2^{\ell/4}\norm{P_0}_{L^4(A_\ell)}\bnorm{(1-\chi_j)\ql(v\uno_{A_j})}_{L^\infty(A_\ell)} \\
	&\ls 2^{-3\max(j,\ell)/2}\pa{2^{\ell/2}\norm{P_0}_{L^4(A_j)}+2^{\ell/4}2^{j/4}\norm{P_0}_{L^4(A_\ell)}}\norm{v}_{L^{4/3}(A_j)} \\
	&\ls\max(j+1,\ell+1,s)2^{j/2+\ell/2} 2^{-3\max(j,\ell)/2}\norm{P}_\hmz\norm{v}_{L^{2,\infty}(A_j)}. \end{align*}
	Since $T(P_0,v\uno_{A_j})\in L^1(\R)$ by Theorem \ref{tuest}, it follows that for all $j\ge s$
	\begin{align*} &\bnorm{\mql((1-\chi_j)T(P_0,v\uno_{A_j}))}_{L^2(B)} \\
	&\ls\Bigg(\bnorm{\abs{x}^{-1/2}*((1-\chi_j)T(P_0,v\uno_{A_j})\uno_{A_0})}_{L^2(B)}+\sum_{\ell=1}^\infty 2^{-\ell/2}\norm{(1-\chi_j)T(P_0,v\uno_{A_j})}_{L^1(A_\ell)}\Bigg) \\
	&\ls\sum_{\ell=0}^\infty\norm{(1-\chi_j)T(P_0,v\uno_{A_j})}_{L^2(A_\ell)} \\
	&\ls \norm{P}_\hmz\norm{v}_{L^{2,\infty}(A_j)}\Bigg(\sum_{\ell=0}^j(j+1) 2^{-j+\ell/2}+\sum_{\ell=j+1}^\infty (\ell+1)2^{j/2-\ell}\Bigg) \\
	&\ls 2^{-j/4}\norm{P}_\hmz\norm{v}_{L^{2,\infty}(A_j)}. \end{align*}
	Similarly, as $T((1-\rho)P_0,v\uno_{B(0,2^s)})=\ql((1-\rho)P_0)v\uno_{B(0,2^s)}-(1-\rho)P_0\ql(v\uno_{B(0,2^s)})$,
	\begin{align*} \norm{T((1-\rho)P_0,v\uno_{B(0,2^s)})}_{L^{2,\infty}(B(0,2^s))}&\ls\sum_{p=s}^\infty 2^{-p}\norm{P_0}_{L^2(A_p)}\norm{v}_{L^{2,\infty}(B(0,2^s))} \\
	&\ls\sum_{p=s}^\infty p2^{-p/2}\norm{P}_{\hmz(B(0,2^{p+1}))}\norm{v}_{L^{2,\infty}(B(0,2^s))}, \end{align*}
	\begin{align*} \norm{T((1-\rho)P_0,v\uno_{B(0,2^s)})}_{L^2(A_\ell)}&\ls 2^{-3\ell/2}\norm{P_0}_{L^2(A_\ell)}\norm{v}_{L^1(B(0,2^s))} \\
	&\ls \ell 2^{s/2-\ell}\norm{P}_{\hmz(B(0,2^{\ell+1}))}\norm{v}_{L^{2,\infty}(B(0,2^s))} \end{align*}
	for $\ell>s$ (notice that $1-\rho$ vanishes near $\bar B(0,2^s)$). Since
	\begin{align*} &\bnorm{\mql (T((1-\rho)P_0,v\uno_{B(0,2^s)})\uno_{B(0,2^s)})}_{L^2(B)} \\
	&\ls\norm{\abs{x}^{-1/2}}_{L^{4/3}(B(0,2^{s+3}))}\norm{T((1-\rho)P_0,v\uno_{B(0,2^s)})}_{L^{4/3}(B(0,2^s))} \\
	&\ls 2^{s/2}\norm{T((1-\rho)P_0,v\uno_{B(0,2^s)})}_{L^{2,\infty}(B(0,2^s))},\end{align*}
	we get
	\[ \bnorm{\mql T((1-\rho)P_0,v\uno_{B(0,2^s)})}_{L^2(B)}\ls\sum_{j=s}^\infty 2^{s/2-j/4}\norm{P}_{\hmz(B(0,2^j))}\norm{v}_{L^{2,\infty}(A_0)}. \]
	Finally, we estimate $\norm{\mql (\chi_j T(P_0,v\uno_{A_j}))}_{L^2(B)}$ by duality: for any $\psi\in C^\infty_c(B)$ with $\norm{\psi}_{L^2}\le 1$
	\begin{align*} \big\langle\mql(\chi_j T(P_0,v\uno_{A_j})),\psi\big\rangle&=\int\chi_j T(P_0,v\uno_{A_j})\mql\psi \\
	&=\int\mql T(P_0,v\uno_{A_j})\ql(\chi_j\mql\psi). \end{align*}
	The first identity holds since $T(P_0,v\uno_{A_j})\in L^1(\R)$ (by Theorem \ref{tuest}), while the second is justified by $\chi_j\mql\psi\in C^\infty_c(\R)$, $\mql T(P_0,v\uno_{A_j})\in L^2(\R)$ (by Corollary \ref{tuinfty}) and Plancherel's theorem.
	
	We observe that, on the support of $\chi_j$, $\mql\psi$ is bounded by $2^{-j/2}$ and its derivative by $2^{-3j/2}$ (as $\mql\psi=c\abs{x}^{-1/2}*\psi$), so $f:=\chi_j\mql\psi$ has $\norm{f}_{L^\infty}\ls 2^{-j/2}$, $\norm{f'}_{L^\infty}\ls 2^{-3j/2}$ and
	\[ \bnorm{\ql f}_{L^2}^2\ls\int(2^{-j}+2^j\xi^2)\abs{\widehat f(\xi)}^2\,d\xi\ls 2^{-j}\norm{f}_{L^2}^2+2^j\norm{f'}_{L^2}^2\ls 2^{-j}. \]
	Moreover, by Corollary \ref{tuinfty}, $\norm{\mql T(P_0,v\uno_{A_j})}_{L^2}\ls\norm{P}_\hmz\norm{v}_{L^{2,\infty}(A_j)}$. We deduce that
	\[ \bnorm{\mql(\chi_j T(P_0,v\uno_{A_j}))}_{L^2}\ls 2^{-j/2}\norm{P}_\hmz\norm{v}_{L^{2,\infty}(A_j)}. \qedhere \]
\end{proof}
\subsection{Rewriting the Euler--Lagrange equation}
 
In Lemma \ref{circletoline} we have seen that $w:=u\circ\psi_\ell^{-1}\circ\Pi^{-1}\in\hmz(\R,\subman)$ satisfies 
\[ P^T(w)\ml w+\frac{2}{1+x^2}P^T(w)\pa{R_\ell((f_j)_{j=1}^k)\circ\Pi^{-1}}=0 \quad\text{in } \mathcal{D}'({\R}). \]
Therefore we can write \begin{equation}\label{eulerequ2} \ml w=P^N(w)\ml w+h \end{equation}
where $h=-\frac{2}{1+x^2}P^T(w)\pa{R_\ell((f_j)_{j=1}^k)\circ\Pi^{-1}}\in L^1\cap L^{\infty}(\R)$.
We are going to reformulate the equation \eqref{eulerequ2} in the same spirit it has been done in \cite{DLS17,MS17}.\par

This equivalent reformulation will be crucial in order to obtain the regularity of $w$. First of all, writing for simplicity $P^T$ and $P^N$ in place of $P^T(w)$ and $P^N(w)$,
\begin{align*} P^N\ml w&=\ql(P^N\ql w)+\ql P^N\ql w-T(P^N,\ql w) \\
&=\ql(P^N\ql w)+\ql P^N P^N\ql w \\
&\quad+\ql P^N P^T\ql w-T(P^N,\ql w). \end{align*}
Next, we observe that
\begin{align*} &\ql P^N P^T=-P^N\ql P^T+T^*(P^N,P^T) \\
&=P^N\ql P^N+T^*(P^N,P^T)=\Omega_0+\Omega_1+\ql P^N P^N, \end{align*}
where $\Omega_0:=P^N\ql P^N-\ql P^N P^N\in L^2(\R,\mathfrak{so}(m))$, $\Omega_1:=T^*(P^N,P^T)\in L^{2,1}(\R,\R^{m\times m})$. Hence, setting $v:=\ql w\in L^2(\R,\R^m)$, we arrive at
\begin{align}\label{schrod} \ql v=\Omega_0 v+\Omega_1 v+\ql(P^N v)+2\ql P^N(P^N v)-T(P^N,v)+h. \end{align}

\begin{thm}\label{regw}
	The map $v=\ql w$ has $\ql(P^T v),\rz\ql(P^N v)\in L^1(\R,\R^m)$ and there exists $\alpha>0$ such that
	\[ \bnorm{\ql(P^T v)}_{L^1(B(x_0,r))}+\bnorm{\rz\ql(P^N v)}_{L^1(B(x_0,r))}\ls r^\alpha, \]
	for all $r>0$, uniformly in $x_0\in\R$.
\end{thm}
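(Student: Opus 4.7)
The idea is to decompose $\ql(P^T v)$ and $\rz\ql(P^N v)$ into explicit combinations of commutators and products, and then estimate each piece via the bilinear tools of Section~\ref{tcsec} (globally in $L^1$) and of Lemmas~\ref{loctstar}--\ref{loct} (locally, with uniform $r^\alpha$-decay). From the defining identity of $T$, we have
\[ \ql(P^T v) = T(P^T, v) + P^T\,\ql v - v\,\ql P^T. \]
The Euler--Lagrange relation \eqref{eulerequ2} yields $\ql v = \ml w = P^N \ml w + h$, so $P^T \ql v = P^T h = h$ (the last equality since $h$ is already tangential). Hence
\[ \ql(P^T v) = T(P^T, v) + h - v\,\ql P^T. \]
Symmetrically, from the definition of $U$,
\[ \rz\ql(P^N v) = -U(P^N, v) + P^N \rz\ql v + v\,\rz\ql P^N, \]
and the middle summand will be treated by substituting the Schr\"odinger system \eqref{schrod} for $\ql v$ and commuting $\rz$ past the projection $P^N$ using the $\Lambda$-identity (Theorem \ref{lambdaest}) and the Coifman--Rochberg--Weiss commutator $F$ (Theorem \ref{fest}).

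For the global $L^1$ bound, I would argue summand by summand. Theorem~\ref{tuest} gives $T(P^T,v),\,U(P^N,v)\in\hardy(\R)\subset L^1(\R)$. The drift $h$ and its Hilbert--Riesz transform $\rz h$ are in $L^1\cap L^\infty$ by direct inspection (using the Schwartz decay of $\frac{1}{1+x^2}$ and the smoothing action of $R_\ell$). Products such as $v\,\ql P^T$ and $v\,\rz\ql P^N$ belong to $L^1$ by Cauchy--Schwarz on $L^2\times L^2$, since $P^T,P^N\in\hmzb(\R)$. The ``potential'' pieces of type $\Omega_i v$ are in $L^1$ by Lorentz-space H\"older: Theorem~\ref{tustarest} supplies $\Omega_1=T^*(P^N,P^T)\in L^{2,1}(\R)$, while Lemma~\ref{PNprop} combined with the same theorem yields $P^N v\in L^{2,1}(\R)$; the inclusions $L^{2,1}\cdot L^{2,\infty}\subset L^1$ and $L^2\subset L^{2,\infty}$ close the argument, and the genuinely new commutator terms introduced by pushing $\rz$ through $P^N$ are in $\hardy(\R)$ by Theorems~\ref{lambdaest}--\ref{fest}.

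For the uniform $r^\alpha$-Morrey decay, I would invoke Lemmas~\ref{loctstar} and \ref{loct} together with Corollaries~\ref{tustarinfty} and \ref{tuinfty}. Each of these estimates bounds local norms on $B(x_0,r)$ by weighted sums of the form $\sum_{j\ge0} 2^{-\beta j}\bnorm{\ql w}_{L^{2,\infty}(A_j)}$ with a fixed $\beta>0$, and since $\ql w\in L^2(\R)$ these sums tend to $0$ as $r\to0$ with algebraic rate. The uniformity in $x_0\in\R$ comes from the fact that $w=u\circ\psi_\ell^{-1}\circ\Pi^{-1}$ is asymptotically constant at infinity (the stereographic projection sends $\infty$ to the continuity point $i\in\mathcal S^1$), so the tails of $\ql w$ and of $\norm{w}_{\hmz(\cdot)}$ are uniformly small. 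Combining these local bounds with the trivial Cauchy--Schwarz estimate $\bnorm{f}_{L^1(B(x_0,r))}\le r^{1/2}\bnorm{f}_{L^2(B(x_0,r))}$ for the commutator terms, and with direct H\"older bounds for the pure products, one extracts a common exponent $\alpha>0$ and the desired inequality.

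The main obstacle is the term $P^N \rz\ql v$ in the decomposition of $\rz\ql(P^N v)$: a priori $\ql v=\ml w$ lies only in $\hmmz(\R)$, so applying $\rz$ and then multiplying by $P^N\in L^\infty\cap\hmz$ is not licit via the naive product rule. The remedy is to substitute \eqref{schrod} for $\ql v$ \emph{before} applying $\rz$, whereby the problematic contribution splits into $\rz(\Omega_0 v)+\rz(\Omega_1 v)+\rz\ql(P^N v)+2\rz\ql P^N(P^N v)-\rz T(P^N,v)+\rz h$; the antisymmetry of $\Omega_0$, together with the $\Lambda$- and $F$-identities, provides the compensation needed to close the estimate in $L^1$. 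Once this is achieved, the remaining steps are routine applications of the machinery in Section~\ref{tcsec}.
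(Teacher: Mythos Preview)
Your decomposition in Step~4 is essentially the paper's, but you have misidentified the difficulty. The term you call the ``main obstacle'', $P^N\rz\ql v$, is not an obstacle at all: since $\rz\ml w=-\nabla w$ (check Fourier symbols) and \eqref{horiz} gives $P^N\nabla w=0$, this term vanishes identically. So you need not substitute \eqref{schrod} or invoke $\Lambda$, $F$ here; the identity is simply
\[ \rz\ql(P^N v)=\rz\ql P^N\cdot v - U(P^N,v). \]

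The genuine gap is in your Morrey argument. You assert that sums of the form $\sum_{j\ge 0}2^{-\beta j}\bnorm{\ql w}_{L^{2,\infty}(A_j)}$ ``tend to $0$ as $r\to 0$ with algebraic rate'' because $\ql w\in L^2$. This is false: $L^2$-membership gives only $\bnorm{\ql w}_{L^{2,\infty}(A_j)}\le\bnorm{\ql w}_{L^2}$, so the sum is merely \emph{bounded}, and for a generic $L^2$ function $\norm{v}_{L^2(B_r)}$ can decay arbitrarily slowly (e.g.\ like $(\log 1/r)^{-1}$). Likewise, the product terms $v\,\ql P^T$ and $v\,\rz\ql P^N$ have $L^1(B_r)$-norm bounded by $\norm{v}_{L^2(B_r)}\norm{\ql P^T}_{L^2(B_r)}$, and neither factor has a priori algebraic decay. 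Lemmas~\ref{loctstar}--\ref{loct} are \emph{localizations}, not decay estimates; they only become useful once you already know $\norm{v}_{L^{2,\infty}(B_r)}\ls r^\beta$.

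What the paper actually does to obtain that decay is a two-step mechanism you have omitted entirely. First, a \emph{local gauge change}: on a small ball $B(x_0,R)$ one writes $\Omega_0\uno_{B(x_0,R)}=\tfrac12(Q^{-1}\ql Q-\ql Q^{-1}Q)$ for some $Q\in\hmzb(\R,SO(m))$ with $\norm{Q}_\hmz\ls\norm{\Omega_0}_{L^2(B(x_0,R))}$, and passes to $\tilde v:=Qv$. This absorbs the antisymmetric potential; without it the $\Omega_0 v$ term has no smallness and the argument cannot close. Second, applying $\mql$ to the equation for $\tilde v$ and estimating each piece via Lemmas~\ref{loctstar}--\ref{loct} yields a \emph{recursive} inequality of the shape
\[ \norm{v}_{L^{2,\infty}(B(x_0,r))}\le\epsilon\,\norm{v}_{L^{2,\infty}(B(x_0,2^s r))}+C\sum_{j>s}2^{-j/4}\norm{v}_{L^{2,\infty}(B(x_0,2^j r))}+Cr^{1/2}, \]
with $\epsilon$ small because $R$ is small. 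An iteration lemma then gives $\norm{v}_{L^{2,\infty}(B_r)}\ls r^\beta$, which is bootstrapped to an $L^2$-Morrey bound via an Adams-type argument, and only \emph{then} are your Step~4 identities invoked. Your proposal skips Steps~1--3; as written it does not produce any $r^\alpha$.
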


\begin{proof}
	\underline{Step 1.} Fix any $x_0\in\R$. We first proceed to locally remove the antisymmetric matrix $\Omega_0$: if $R>0$ is small enough, then we can write $\Omega_0\uno_{B(x_0,R)}=\mz(Q^{-1}\ql Q-\ql Q^{-1}Q)$ for some $Q\in\hmz(\R,SO(m))$ with $\norm{Q}_\hmz\ls\norm{\Omega_0}_{L^2(B(x_0,R))}$ (see \cite[Section~4]{DLR11} and \cite[Section~4]{DLS17}). The map $\tilde v:=Qv$ then satisfies
	\begin{align*} \ql\tilde v&=Q\ql v-\ql Q v+T(Q,v) \\
	&=Q\Omega_0 v+Q\Omega_1 v+Q\ql(P^N v)+2(Q\ql P^N)(P^N v) \\
	&\quad-QT(P^N,v)+Qh-\ql Q v+T(Q,v). \end{align*}
	Using the identities
	\[ \Omega_0\uno_{B(x_0,R)}-Q^{-1}\ql Q=-\mz T^*(Q^{-1},Q), \]
	\[ Q\ql(P^N v)=\ql(QP^N v)+\ql Q P^N v-T(Q,P^N v), \]
	\[ QT(P^N,v)=T^*(Q,P^N)v+T(QP^N,v)-T(Q,P^N v), \]
	we get
	\begin{align*} \ql\tilde v&=Q\Omega_0\uno_{\R\setminus B(x_0,r)}v-\frac{Q}{2}T^*(Q^{-1},Q)v+Q\Omega_1 v+\ql(QP^N v) \\
	&\quad+\ql Q P^N v+2(Q\ql P^N)(P^N v)-T^*(Q,P^N)v-T(QP^N,v) \\
	&\quad+Qh+T(Q,v) \\
	&=\tilde\Omega_0\tilde v+\tilde\Omega_1\tilde v+\tilde\Omega_2 P^N v+\ql(QP^N v)+T(QP^T,v)+Qh, \end{align*}
	with $\tilde\Omega_0:=Q\Omega_0\uno_{\R\setminus B(x_0,r)}Q^{-1}$, $\tilde\Omega_1:=Q\pa{\Omega_1-\mz T^*(Q^{-1},Q)}Q^{-1}-T^*(Q,P^N)Q^{-1}$ and $\tilde\Omega_2:=\ql Q+2Q\ql P^N$. Notice that $\tilde\Omega_0,\tilde\Omega_2\in L^2(\R,\R^{m\times m})$ and $\tilde\Omega_1\in L^{2,1}(\R,\R^{m\times m})$.
	Recall that $P^N v\in L^2(\R)$ by Lemma \ref{PNprop}
	(see also \cite[Lemma~3.5]{DLS17} for related properties).
	
	\underline{Step 2.} Next, we use the last equation satisfied by $\tilde v$ in order to estimate locally the $L^{2,\infty}$-norm of $v$. As $\tilde v\in L^2(\R,\R^m)$, we have
	\begin{align*} \tilde v=\mql\ql\tilde v&=\mql(\tilde\Omega_0\tilde v)+\mql(\tilde\Omega_1\tilde v)+\mql(\tilde\Omega_2 P^N v) \\
	&\quad+QP^N v+\mql T(QP^T,v)+\mql(Qh). \end{align*}
	Fix any radius $r\le\frac{R}{2}$ and an integer $s\ge 1$.
	
	Notice that 
	$\mql(\tilde\Omega_0\tilde v)=c\abs{x}^{-1/2}*(\tilde\Omega_0\tilde v)$ restricts to a $L^\infty$ function on $B=B(x_0,r)$, as $\tilde\Omega_0\tilde v$ is supported far from $B$, while $\mql(Qh)\in L^\infty(\R)$
	since $\abs{x}^{-1/2}\in L^1(\R)+L^\infty(\R)$ and $h\in L^1\cap L^\infty(\R)$. Moreover, being $\abs{x}^{-1/2}\in L^{2,\infty}(\R)$,
	\begin{align*} &\bnorm{\mql(\tilde\Omega_1\tilde v)}_{L^{2,\infty}(B)}
	\ls\bnorm{\tilde\Omega_1\tilde v}_{L^1(B(x_0,2^sr))}+\sum_{j=s}^\infty r^{1/2}\norm{\abs{x}^{-1/2}*(\tilde\Omega_1\tilde v\uno_{A_j})}_{L^\infty(B)} \\
	&\ls\bnorm{\tilde\Omega_1}_{L^{2,1}(B(x_0,2^sr))}\norm{v}_{L^{2,\infty}(B(x_0,2^sr))}+\sum_{j=s}^\infty 2^{-j/2}\bnorm{\tilde\Omega_1\tilde v}_{L^1(A_j)} \\
	&\ls\pa{\norm{\Omega_1}_{L^{2,1}(B(x_0,2^sr))}+\norm{\Omega_0}_{L^2(B(x_0,R))}}\norm{v}_{L^{2,\infty}(B(x_0,2^sr))}+\sum_{j=s}^\infty 2^{-j/2}\norm{v}_{L^{2,\infty}(A_j)}, \end{align*}
	where we used Theorem \ref{tustarest} and we neglected $\bnorm{\tilde\Omega_1}_{L^{2,1}}$ in the estimate of $\bnorm{\tilde\Omega_1\tilde v}_{L^1(A_j)}$, as well as $\norm{P^N}_\hmz$ (recall that $A_j=B(x_0,2^{j+1}r)\setminus B(x_0,2^j r)$). Similarly, by Lemmas \ref{PNprop} and \ref{loctstar},
	\begin{align*} &\bnorm{\mql(\tilde\Omega_2P^N v)+QP^N v}_{L^{2,\infty}(B)}\ls\sum_{j=0}^\infty 2^{-j/2}\norm{P^N v}_{L^2(B(x_0,2^jr))} \\
	&\ls\sum_{j=0}^\infty 2^{-j/2}\pa{\norm{w}_{\hmz(B(x_0,2^{j+2}r))}+\norm{v}_{L^{2,\infty}(B(x_0,2^{j+2}r))}}\norm{v}_{L^{2,\infty}(B(x_0,2^{j+1}r))} \\
	&\quad +\sum_{j=0}^\infty\sum_{\ell=j+1}^\infty 2^{-j/2-(\ell-j)/4} \pa{\norm{w}_{\hmz(B(x_0,2^{j+2}r))}+\norm{v}_{L^{2,\infty}(A_\ell)}}\norm{v}_{L^{2,\infty}(A_\ell)} \\
	&\ls\pa{\norm{w}_{\hmz(B(x_0,2^{s+1}r))}+\norm{v}_{L^{2,\infty}(B(x_0,2^{s+1}r))}}\norm{v}_{L^{2,\infty}(B(x_0,r))} \\
	&\quad +\sum_{j=s+1}^\infty 2^{-j/4}\norm{v}_{L^{2,\infty}(B(x_0,2^jr))}, \end{align*}
	where we neglected $\bnorm{\tilde\Omega_2}_{L^2}$ and $\norm{v}_{L^{2,\infty}},\norm{w}_\hmz\ls\norm{v}_{L^2}$. Finally, using Lemma \ref{loct} and neglecting $\norm{QP^T}_\hmz$,
	\begin{align*} &\bnorm{\mql T(QP^T,v)}_{L^2(B)} \\
	&\ls\norm{v}_{L^{2,\infty}(B(0,2^s))}\sum_{j=s}^\infty 2^{s/2-j/4}\norm{QP^T}_{\hmz(B(0,2^jr))}+\sum_{j=s}^\infty 2^{-j/4}\norm{v}_{L^{2,\infty}(A_j)} \\
	&\ls2^{s/2}\Bigg(\norm{\Omega_0}_{L^2(B(x_0,R))}+\sum_{j=s}^\infty 2^{-j/4}\norm{P^T}_{\hmz(B(x_0,2^jr))}\Bigg)\norm{v}_{L^{2,\infty}(B(x_0,2^sr))} \\
	&\quad+\sum_{j=s}^\infty 2^{-j/4}\norm{v}_{L^{2,\infty}(A_j)}. \end{align*}
	Combining the previous estimates, given $\epsilon$ we can fix $R$ (depending on $\epsilon$ and $s$) so small that
	\[ \norm{v}_{L^{2,\infty}(B(x_0,r))}\le \epsilon\norm{v}_{L^{2,\infty}(B(x_0,2^sr))}+C\sum_{j=s+1}^\infty 2^{-j/4}\norm{v}_{L^{2,\infty}(B(x_0,2^jr))}+Cr^{1/2} \]
	for all $r\le\frac{R}{2}$, with $C$ independent of $\epsilon$ and $s$. Choosing $s$ large enough, it follows that
	\begin{equation} \label{mor2infty} \norm{v}_{L^{2,\infty}(B(x_0,r))}\ls r^\beta. \end{equation}
	for some $0<\beta<\frac14$ and all $r>0$ small enough (see e.g. \cite[Lemma~A.8]{BRS16}, applied to the sequence $b_0:=\norm{v}_{L^{2,\infty}}$, $b_k:=\norm{v}_{L^{2,\infty}(B(x_0,2^{-k}R))}$ for $k>0$). Hence, being $\norm{v}_{L^2}$ finite, this holds for all $r>0$.	
	Notice that this inequality is uniform in $x_0$.
	
	\underline{Step 3.} We define $\zeta:=\mql(\tilde\Omega_1\tilde v)+\mql(\tilde\Omega_2 P^N  v)\in L^{2,\infty}(\R,\R^m)$ (where $\tilde\Omega_1$ and $\tilde\Omega_2$, defined above, depend on $x_0$). From \eqref{mor2infty} and the preceding estimates we deduce
	\[ \bnorm{\ql\zeta}_{L^1(B(x,r))}\ls r^\beta \]
	for all $r>0$ and all $x\in\R$.
	This Morrey-type estimate for the local $L^1$-norm of $\ql\zeta$ implies that $\zeta\in L^p_{loc}(\R,\R^m)$ for some $2<p<\infty$: indeed, arguing as in \cite[Proposition~3.1]{Ada75}, we have $\zeta=c\abs{x}^{-1/2}*\ql\zeta$ and thus, for a.e. $x\in\R$ and all $r>0$,
	\begin{align*} \abs{\zeta(x)}&\ls\sum_{j\in\Z}(2^jr)^{-1/2}\bnorm{\ql\zeta}_{L^1(B(x,2^jr)\setminus B(x,2^{j-1}r))} \\
	&\ls M(\ql\zeta)(x)\sum_{j\le 0}2^{j/2}r^{1/2}+\sum_{j>0}(2^jr)^{\beta-1/2} \\
	&\ls r^{1/2}M(\ql\zeta)(x)+r^{\beta-1/2}. \end{align*}
	Optimizing this inequality in $r$, we infer that
	\[ \abs{\zeta(x)}\ls M(\ql\zeta)(x)^{(\mz-\beta)/(1-\beta)}. \]
	for all $x\in\R$. The right-hand side lies in $L^{(1-\beta)/(\mz-\beta),\infty}(\R)$ (as $\ql\zeta\in L^1(\R,\R^m)$), so we get the claim for any $2<p<\frac{1-\beta}{\mz-\beta}$. In particular, we get $\norm{\zeta}_{L^2(B(x,r))}\ls r^{\beta'}$ for some $\beta'>0$. On the other hand,
	\[ \tilde v-\zeta=\mql(\tilde\Omega_0\tilde v)+QP^N v+\mql T(QP^T,v)+\mql(Qh) \]
	and so the estimates derived in Step 2 give $\norm{\tilde v-\zeta}_{L^2(B(x_0,r))}\ls r^{\beta}$, for all $0<r<\frac{R}{2}$. We deduce that, for all $0<r<\frac{R}{2}$,
	\[ \norm{v}_{L^2(B(x_0,r))}\le\norm{\tilde v-\zeta}_{L^2(B(x_0,r))}+\norm{\zeta}_{L^2(B(x_0,r))}\ls r^\alpha, \]
	with $\alpha:=\min\set{\beta,\beta'}$. Hence $\norm{v}_{L^2(B(x_0,r))}\ls r^\alpha$ for all $r>0$, uniformly in $x_0$.
	
	\underline{Step 4.} Finally, Lemma \ref{circletoline} gives the two identities
	\[ \ql(P^T v)=h-\ql P^T v+T(P^T,v), \]
	\[ \rz\ql(P^N v)=\rz\ql P^N v-U(P^N,v), \]
	as $\rz\ml w=-\nabla w$. Arguing as in the proof of Lemma \ref{loct}, but using Theorem \ref{tuest} in place of Corollary \ref{tuinfty}, we finally get
	\[ \norm{T(P^T,v)}_{L^1(B)}\ls\norm{v}_{L^2(A_0)}+\sum_{j=1}^\infty\norm{T(P^T,v\uno_{A_j})}_{L^1(B)}
	\ls\sum_{j=0}^\infty2^{-j/2}\norm{v}_{L^2(A^j)}\ls r^\alpha \]
	and similarly $\norm{U(P^N,v)}_{L^1(B)}\ls r^\alpha$. The thesis follows.
\end{proof}

\begin{corollary}\label{lpreg}
	We have $v\in L^p_{loc}(\R,\R^m)$ and $w\in C^{0,\gamma}_{loc}(\R,\R^m)$, for some $p>2$ and some $\gamma>0$.\end{corollary}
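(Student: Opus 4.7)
The plan is to apply the Adams--type embedding already used in Step~3 of the proof of Theorem \ref{regw} separately to the tangential and normal parts of $v := \ql w$, exploiting the Morrey--type $L^1$ bounds from Theorem \ref{regw}, and then to combine the resulting local $L^p$ regularity of $v$ with Corollary \ref{hspholder} to obtain H\"older continuity of $w$.

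For the tangential component, I would start from the bound $\bnorm{\ql(P^T v)}_{L^1(B(x_0,r))} \ls r^\alpha$ (uniformly in $x_0$) and the representation $P^T v = \mql\ql(P^T v) = c\abs{x}^{-1/2} * \ql(P^T v)$. A dyadic decomposition of the kernel $\abs{x}^{-1/2}$, exactly as in Step~3 of Theorem \ref{regw}, yields
\[ \abs{P^T v(x)} \ls r^{1/2}\, M(\ql(P^T v))(x) + r^{\alpha - 1/2}\qquad\text{for all } r>0, \]
and optimizing in $r$ gives $\abs{P^T v(x)} \ls M(\ql(P^T v))(x)^{(1/2-\alpha)/(1-\alpha)}$. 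Since $M(\ql(P^T v)) \in L^{1,\infty}(\R)$, this places $P^T v$ in $L^{p,\infty}(\R)$ with $p := (1-\alpha)/(1/2-\alpha) > 2$. For the normal part, I would replace $P^N v$ by $\rz(P^N v)$: because $\rz$ commutes with $\ql$, the function $\ql(\rz(P^N v)) = \rz\ql(P^N v)$ satisfies the very same Morrey $L^1$ estimate, so the identical Adams argument yields $\rz(P^N v) \in L^{p,\infty}(\R)$ for some $p > 2$. Using $\rz^2 = -\mathrm{id}$ and the boundedness of $\rz$ on the Lorentz spaces $L^{p,q}(\R)$ for $1<p<\infty$ (as a Calder\'on--Zygmund operator), one recovers $P^N v = -\rz(\rz(P^N v)) \in L^{p,\infty}(\R)$. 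Summing, $v \in L^{p,\infty}(\R)$ globally, and in particular $v \in L^{p'}_{loc}(\R,\R^m)$ for every $2<p'<p$.

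For the H\"older regularity of $w$, I would upgrade $\ql w = v \in L^{p'}_{loc}$, together with $w \in L^\infty \cap \hmz$ (the $L^\infty$ bound being immediate from the fact that $w$ takes values in the compact manifold $\subman$), to the local membership $w \in H^{1/2,p'}_{loc}(\R,\R^m)$. Multiplying by a smooth cutoff $\chi$ and using the commutator identity $\ql(\chi w) = \chi v - (\ql\chi)\, w + T(\chi,w)$, each term is controlled in $L^{p'}$ locally: $\chi v$ directly, $(\ql\chi)w$ via the decay of $\ql\chi$ at infinity, and $T(\chi,w)$ through Theorem \ref{tuest} after an auxiliary cutoff which makes the second entry a legitimate $L^2$ argument. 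Corollary \ref{hspholder} applied with $s=1/2$ and $p=p'$ then yields $w \in C^{0,\gamma}_{loc}(\R,\R^m)$ with $\gamma := 1/2 - 1/p' > 0$.

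The main obstacle is the normal component $P^N v$: Theorem \ref{regw} controls only $\rz\ql(P^N v)$ and not $\ql(P^N v)$ itself, so one cannot feed $P^N v$ directly into the Adams scheme; the detour through $\rz(P^N v)$ using $\rz\ql = \ql\rz$, followed by Lorentz--space boundedness of $\rz$ to transfer the $L^{p,\infty}$ conclusion back to $P^N v$, is the crucial structural move. A secondary technical point is the passage from the pointwise $L^{p'}$ integrability of $v = \ql w$ to a bona fide local fractional Sobolev regularity of $w$: the non-locality of $\ql$ forces one to control the commutator term $T(\chi,w)$ with Theorem \ref{tuest}, for which $w$ must first be localized appropriately.
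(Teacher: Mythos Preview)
Your argument for $v\in L^p_{loc}$ is correct and closely parallels the paper's. For the normal part you invoke the global $L^{p,\infty}$-boundedness of $\rz$ to pass from $\rz(P^N v)$ back to $P^N v$; the paper instead localizes via a cutoff $\rho$, writing $P^N v=-\rz(\rho\rz(P^N v))-\rz((1-\rho)\rz(P^N v))$ and handling the two pieces separately via $L^p$-boundedness of $\rz$ and the smoothness of the far-field term. Your route is slightly cleaner.

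The H\"older argument, however, has a gap. You need $T(\chi,w)\in L^{p'}$ with $p'>2$, but Theorem~\ref{tuest} only places $T(\chi,\cdot)$ in $\hardy(\R)\subset L^1(\R)$; it yields no $L^{p'}$ control whatsoever, regardless of how you localize the second entry. The fix is elementary: from Lemma~\ref{qlintrep} one has the pointwise bound
\[ \babs{\ql(\chi w)-\chi\ql w}(x)\ls\int\frac{\abs{\chi(x)-\chi(y)}}{\abs{x-y}^{3/2}}\abs{w(y)}\,dy\in L^\infty(\R), \]
since $\chi$ is Lipschitz with compact support and $w$ is bounded; this is precisely how the paper shows $w\in H^{1/2,p}_{loc}$ in Section~\ref{higherreg}. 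Alternatively, the paper's own proof of the corollary bypasses $H^{1/2,p'}_{loc}$ altogether: from $\norm{\ql w}_{L^2(B(x_0,r))}\ls r^\gamma$ (with $\gamma=\tfrac12-\tfrac1p$) and Lemma~\ref{tech2infty} it obtains the Campanato estimate $\big(\media_{B}\abs{w-(w)_B}^2\big)^{1/2}\ls r^\gamma$ directly, and concludes via the integral characterization of H\"older continuity.
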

 
We include the standard proof for the reader's convenience.

\begin{proof}
	Arguing as in Step 3 of the proof of Theorem \ref{regw}, we infer that
	\[ \int_{B(x_0,4)}\abs{P^T v}^p+\int_{B(x_0,4)}\abs{\rz(P^N v)}^p\ls 1 \]
	for some $p>2$, uniformly in $x_0$. If $\rho\in C^\infty_c(B(x_0,4))$ is a cut-off function with $\rho=1$ on $B(x_0,2)$,
	\[ P^N v=-\rz\rz(P^N v)=-\rz(\rho\rz(P^N v))-\rz((1-\rho)\rz(P^N v)). \]
	Using \cite[Proposition~2.4.8]{Gra14C} applied to $-i\operatorname{sgn}(\xi)$ (whose inverse Fourier transform is $(-1)$-homogeneous) and the fact that $(1-\rho)\rz(P^N v)\in L^2(\R,\R^m)$ is supported far from $B(x_0,1)$,
	\[ \norm{\rz((1-\rho)\rz(P^N v))}_{L^\infty(B(x_0,1))}\ls 1 \]
	and, from the $L^p$ boundedness of the Hilbert--Riesz transform,
	\[ \norm{\rz(\rho\rz(P^N v))}_{L^p}\ls\norm{v}_{L^p(B(x_0,4))}. \]
	We deduce that $v=P^T v+P^N v$ also satisfies an estimate $\int_{B(x_0,1)}\abs{v}^p\ls 1$, uniformly in $x_0$. In particular, $\norm{\ql w}_{L^2(B(x_0,r))}\ls r^\gamma$ with $\gamma=\mz-\frac{1}{p}\in\pa{0,\mz}$ (for $0<r<1$ and hence for all $r>0$).
	Using Lemma \ref{tech2infty} we deduce that
	\[ \pa{\media_{B(x_0,r)}\abs{w-(w)_{B(x_0,r)}}^2}^{1/2}\ls\sum_{j=0}^\infty 2^{-j/2}(2^j r)^\gamma\ls r^\gamma. \]
	This is the integral characterization of H\"older continuity with exponent $\gamma$: see e.g. \cite[Theorem~III.1.2]{Gia83}.
\end{proof}

Applying a rotation before taking the stereographic projection, we arrive at the following.

\begin{corollary}
	The map $u\circ\psi_\ell^{-1}:\mathcal S^1\to\R^m$ is H\"older continuous and, being $\ell$ is arbitrary, $u$ is H\"older continuous.
\end{corollary}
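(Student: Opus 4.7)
The plan is to invoke Corollary \ref{lpreg} after transporting the problem to the real line via stereographic projection, and then use the rotational symmetry of $\mathcal S^1$ to recover H\"older continuity at the ``missing point'' of the projection.

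First, recall from Lemma \ref{circletoline} that the map $w:=u\circ\psi_\ell^{-1}\circ\Pi^{-1}\in\hmz(\R,\subman)$ satisfies the pointwise and distributional identities used in Corollary \ref{lpreg}. Thus $w\in C^{0,\gamma}_{loc}(\R,\R^m)$ for some $\gamma>0$. Since $\Pi:\mathcal S^1\setminus\{i\}\to\R$ is a smooth diffeomorphism with smooth inverse on any compact subset of $\R$ (equivalently, on any compact subset of $\mathcal S^1\setminus\{i\}$), we conclude that $u\circ\psi_\ell^{-1}$ is H\"older continuous on every compact subset of $\mathcal S^1\setminus\{i\}$.

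It remains to handle a neighborhood of the pole $i$. For any rotation $R:\mathcal S^1\to\mathcal S^1$, the composition $\psi_\ell^{-1}\circ R$ is another smooth conformal diffeomorphism from $\mathcal S^1$ onto $C^{(\ell)}$, and the metric $g$ on $A^{(\ell)}$ differs from $R^*\psi_\ell^*(g_{\R^2})$ only by a smooth conformal factor. Thus the whole setup of Lemma \ref{circletoline} and Corollary \ref{lpreg} applies verbatim to the map $w_R:=u\circ\psi_\ell^{-1}\circ R\circ\Pi^{-1}$, with a new (but still smooth and bounded) lower-order term of the form $\tfrac{2}{1+x^2}P^T(w_R)(\tilde R_\ell((f_j))\circ\Pi^{-1})$ for a sequentially continuous linear operator $\tilde R_\ell$. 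In particular, $w_R\in C^{0,\gamma}_{loc}(\R,\R^m)$. Choosing $R$ so that $R(0)=i$, the conclusion gives H\"older continuity of $u\circ\psi_\ell^{-1}$ in a neighborhood of $i$. Combined with the previous step, $u\circ\psi_\ell^{-1}$ is H\"older continuous on all of $\mathcal S^1$, and since $\psi_\ell$ is a smooth diffeomorphism onto $C^{(\ell)}$, we deduce that $\restr{u}{C^{(\ell)}}$ is H\"older continuous. As $\ell\in\{1,\dots,k\}$ is arbitrary and $\de S=\bigsqcup_\ell C^{(\ell)}$, this completes the proof.

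The only nontrivial point is the verification that Corollary \ref{lpreg} genuinely applies to $w_R$ for each rotation $R$. This is not really an obstacle, since the derivation of Lemma \ref{circletoline} used only the conformal invariance of the Dirichlet energy and the smoothness of the change of coordinates; $R$ is an isometry of $\mathcal S^1$, and the rewriting of the Euler--Lagrange equation in Section \ref{holdereg} depends only on the form of the lower-order term $h\in L^1\cap L^\infty(\R,\R^m)$, which is preserved under this change.
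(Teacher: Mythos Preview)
Your argument is correct and follows exactly the paper's approach: the paper simply remarks ``Applying a rotation before taking the stereographic projection, we arrive at the following,'' and your proof spells out precisely this idea. One small notational slip: you write ``choosing $R$ so that $R(0)=i$,'' but $0\notin\mathcal S^1$; what you mean is to choose the rotation so that $R(i)\neq i$ (or equivalently $R(\Pi^{-1}(0))=i$), so that the new pole $R(i)$ lies away from $i$ and Corollary~\ref{lpreg} yields H\"older continuity near $i$.
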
 

\section{Higher regularity of \texorpdfstring{$\mz$}{1/2}-harmonic maps}\label{higherreg}

In this section we prove that $\mz$-harmonic maps $u\in\hmz(\de S,\N)$ are $C^{k-1,\delta}_{loc}$, for any $0<\delta<1$, whenever $\N$ is a $C^k$-closed manifold ($k\ge 2$). We mention that higher regularity of the so-called {\em half-wave} maps into $S^2$ has recently been obtained in \cite{LS17}.

Throughout the section, we will say that $a\in\mathcal{S}'(\R)$ belongs to $H^{s,p}_{loc}(\R)$ (with $s\ge 0$, $1<p<\infty$) if $\psi a\in H^{s,p}(\R)$ for any $\psi\in C^\infty_c(\R)$.

Corollary \ref{lpreg} shows that $\ql w\in L^p_{loc}(\R,\R^m)$, for $w:=u\circ\psi_\ell^{-1}\circ\Pi^{-1}$, for some $p>2$. We now bootstrap this information to higher regularity.
We first prove two results concerning the regularity of the commutator $\rz(ab)-a\rz(b)$. The proofs will rely on the technique of splitting products into {\em paraproducts}, using
the Littlewood--Paley decomposition (see Section \ref{parasec}):
\[ ab=\sum_j a_j b^{j-3}+\sum_j a^{j-3} b_j+\sum_{\abs{j-k}\le 2} a_j b_k, \qquad\widehat{a}_j=\varrho_j\widehat a,\quad \widehat{b}_j=\varrho_j\widehat b. \]
We will treat the first and the third summands together, namely we will just decompose
\[ ab=\sum_j a_j b^{j+2}+\sum_j a^{j-3} b_j. \]

\begin{lemmaen}\label{commstart}
	Let $a\in\hmzb(\R)$ with $\ql a\in L^p(\R)$, for some $2<p<\infty$, and $b\in\dot H^{-1/2}(\R)$. Then
	\[ \norm{\rz(ab)-a\rz(b)}_{L^{2p/(p+2)}}\ls\bnorm{\ql a}_{L^p}\norm{b}_\hmmz. \]
\end{lemmaen}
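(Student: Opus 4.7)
By a density argument (Lemma~\ref{hmzapprox} plus Fourier truncation of $b$, cf.\ Remark~\ref{linftyapprox}) one may reduce to the case $\hat a, \hat b \in C^\infty_c(\R \setminus \{0\})$, so that all paraproduct series below converge absolutely. Using the homogeneous Littlewood--Paley paraproduct split
\[ ab = \sum_{j \in \Z} a^{j-3} b_j + \sum_{j \in \Z} a_j b^{j+2}, \]
a Fourier-support argument shows that $\rz(a^{j-3} b_j) = a^{j-3} \rz(b_j)$: on the support of the integrand in $\widehat{a^{j-3}b_j}(\xi) = \int \widehat{a^{j-3}}(\xi-\eta)\widehat{b_j}(\eta)\,d\eta$ one has $|\xi-\eta| \le 2^{j-2}$ and $|\eta| \ge 2^{j-1}$, forcing $\sgn\xi = \sgn\eta$. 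Hence the first sum contributes nothing to the commutator and it remains to estimate
\[ E := \sum_j \big[\rz(a_j b^{j+2}) - a_j \rz(b^{j+2})\big] \]
in $L^q(\R)$, with $1/q = 1/p + 1/2$.

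The naive triangle inequality via $\|a_j\|_{L^p}\|b^{j+2}\|_{L^2} \ls 2^{-j/2}\|\ql a\|_{L^p}\cdot 2^{j/2}\|b\|_\hmmz$ is uniform in $j$ and not summable. Instead I would use the Triebel--Lizorkin square-function characterizations
\[ \|\ql a\|_{L^p} \sim \bnorm{h}_{L^p},\quad h := \Bigl(\textstyle\sum_j 2^j |a_j|^2\Bigr)^{1/2},\qquad \|b\|_\hmmz \sim \bnorm{g}_{L^2},\quad g := \Bigl(\textstyle\sum_k 2^{-k}|b_k|^2\Bigr)^{1/2}, \]
together with the pointwise Cauchy--Schwarz estimate (homogeneous setting; $\sum_{k\le j+2}2^k \ls 2^j$)
\[ |b^{j+2}(x)|^2 \le \Bigl(\sum_{k\le j+2}2^k\Bigr)\Bigl(\sum_{k\le j+2}2^{-k}|b_k(x)|^2\Bigr) \le C\,2^j\,g(x)^2, \]
and the obvious analogues $|\rz b^{j+2}|\ls 2^{j/2}\tilde g$, $|a_j|\ls 2^{-j/2}h$, $|\rz a_j|\ls 2^{-j/2}\tilde h$ (with $\tilde g, \tilde h$ defined via $\rz b, \rz a$, and $\|\tilde g\|_{L^2}\sim\|b\|_\hmmz$, $\|\tilde h\|_{L^p}\sim\|\ql a\|_{L^p}$). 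Splitting $b^{j+2} = b^{j-3} + (b^{j+2} - b^{j-3})$, the ``high-low'' part of $E$ becomes $\sum_j [\rz(a_j)\,b^{j-3} - a_j\rz(b^{j-3})]$ (using the same Fourier-support argument to move $\rz$ past $a_j$ inside $\rz(a_j b^{j-3})$), with each summand Fourier-supported in the annulus $\{|\xi|\sim 2^j\}$; Littlewood--Paley orthogonality and the pointwise bounds then give
\[ \norm{\sum_j[\rz(a_j)\,b^{j-3} - a_j\rz(b^{j-3})]}_{L^q} \ls \bnorm{g\tilde h + \tilde g h}_{L^q} \ls \|\ql a\|_{L^p}\|b\|_\hmmz \]
by H\"older with $1/q = 1/p + 1/2$.

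The main obstacle is the ``resonant'' remainder $\sum_j\sum_{|k-j|\le 2}[\rz(a_j b_k) - a_j\rz(b_k)]$, whose summands have Fourier support only in a ball of radius $\ls 2^j$ rather than in an annulus, so the square-function bound does not apply directly. I would exploit the extra cancellation of the commutator itself: its Fourier-transform integrand is nonzero only where $\sgn\xi \ne \sgn\eta$, which forces $|\xi-\eta| = |\xi|+|\eta|$ and hence $|\xi| \le 2^{j+1}$. Consequently the $\ell$-th Littlewood--Paley projection of the $j$-th summand vanishes for $\ell > j+2$; projecting this resonant piece of $E$ onto dyadic annuli, applying a weighted one-sided Cauchy--Schwarz over $j - \ell \ge -2$, and invoking once more the pointwise estimates for $h, g, \tilde h, \tilde g$ reduces it to the same bilinear H\"older pairing, yielding the claimed $L^q$ bound.
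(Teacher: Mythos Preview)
Your reduction, paraproduct decomposition, and the Fourier-support argument killing the low--high piece $\sum_j(\rz(a^{j-3}b_j)-a^{j-3}\rz b_j)$ all match the paper exactly. The difficulty is your treatment of the remaining term $E=\sum_j(\rz(a_jb^{j+2})-a_j\rz b^{j+2})$.

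Your further split $b^{j+2}=b^{j-3}+(b^{j+2}-b^{j-3})$ and the handling of the resulting high--low piece $\sum_j(\rz a_j\,b^{j-3}-a_j\,\rz b^{j-3})$ via annular Fourier support and the square-function bound is correct. The gap is in the resonant remainder $\sum_j\sum_{|k-j|\le2}[\rz(a_jb_k)-a_j\rz b_k]$. The ``extra cancellation'' you invoke gives nothing new: the constraint $\sgn\xi\ne\sgn\eta$ only yields $|\xi|\le|\xi-\eta|\le2^{j+1}$, but $a_jb_k$ already has Fourier support in $B(0,2^{j+4})$ without any commutator structure, so you gain no annular localization. Your proposed cure---project onto dyadic annuli, weighted Cauchy--Schwarz in $j-\ell$, then pointwise bounds via $h,g,\tilde h,\tilde g$---does not close: after Cauchy--Schwarz you are left with $\sum_j 2^{\alpha j}\sum_{\ell\le j+C}2^{-\alpha\ell}|(d_j)_\ell|^2$, and there is no pointwise control of $|(d_j)_\ell|$ in terms of $h,g,\tilde h,\tilde g$ (the obstacle is $|\rz(a_jb_k)|$, which admits no such bound). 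The inner sum in $\ell$ also diverges in the homogeneous setting if one merely uses $|(d_j)_\ell|\ls M d_j$.

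The paper sidesteps all of this by \emph{not} treating $E$ as a commutator at all. Since $\rz$ is bounded on $L^{2p/(p+2)}$ and commutes with Littlewood--Paley projections, one has
\[ \|E\|_{L^{2p/(p+2)}}\le\Bnorm{\sum_j a_jb^{j+2}}_{L^{2p/(p+2)}}+\Bnorm{\sum_j a_j(\rz b)^{j+2}}_{L^{2p/(p+2)}}, \]
and it suffices to bound $\sum_j a_jc^{j+2}$ in $L^{2p/(p+2)}$ for $c\in\hmmz$. This is done by duality: since $a_jc^{j+2}$ has Fourier support in $B(0,2^{j+4})$, for $h\in\mathcal S$ one has $\int\sum_j a_jc^{j+2}h=\int\sum_j a_jc^{j+2}h^{j+5}$, and the pointwise bounds $|h^{j+5}|\ls Mh$ together with Cauchy--Schwarz in $j$ give
\[ \Big|\int\sum_j a_jc^{j+2}h\Big|\ls\int\Big(\sum_j 2^j|a_j|^2\Big)^{1/2}\Big(\sum_j2^{-j}|c^{j+2}|^2\Big)^{1/2}Mh, \]
which is controlled by $\bnorm{\ql a}_{L^p}\norm{c}_\hmmz\norm{h}_{L^{2p/(p-2)}}$ exactly via the square-function identifications you wrote down. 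This single duality step replaces both your high--low and resonant arguments.
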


Notice that $\rz(ab)$ is defined, as for $\rho\in\mathcal{S}(\R)$ it holds $\mathcal{F}^{-1}\bra{-i\frac{\xi}{(\epsilon^2+\abs{\xi}^2)^{1/2}}\hat\rho}\to\rz\rho$ in $\hmzb(\R)$ and $ab$ extends to a continuous functional on this space (see Appendix \ref{functapp}).

\begin{proof}
	Notice that the commutator vanishes if $a$ is constant. Thus, as the proof of Lemma \ref{hmzapprox} and Remark \ref{linftyapprox} show, we can assume $\widehat{a},\widehat{b}\in C^\infty_c(\R\setminus\set{0})$.\footnote{We can assume $\widehat a$ has compact support in $\R\setminus\set{0}$, by replacing it with $\widecheck w_k$ (defined as in Lemma \ref{hmzapprox}): the norm $\bnorm{\ql a}_{L^q}$ stays controlled by Lemma \ref{hmztol2} and the same argument of Remark \ref{linftyapprox}; we can then choose $\abs{\xi}^{1/2}\hat v_k$ arbitrarily close to $\abs{\xi}^{1/2}w_k$ in $L^{p'}(\R)$, obtaining $\ql v_k$ close to $\ql\widecheck w_k$ in $L^p(\R)$.} Using the homogeneous decomposition, we write
	\[ \rz(ab)-a\rz(b)=\sum_{j\in\Z}(\rz(a^{j-3}b_j)-a^{j-3}\rz(b_j))+\sum_{j\in\Z}(\rz(a_jb^{j+2})-a_j\rz(b^{j+2})) \]
	and notice that the first sum vanishes since
	\begin{align*} \mathcal{F}(\rz(a^{j-3}b_j)-a^{j-3}\rz(b_j))(\xi)=&-i\sgn(\xi)\int\widehat{a^{j-3}}(\xi-\eta)\widehat{b_j}(\eta)\,d\eta \\
	&+i\int\widehat{a^{j-3}}(\xi-\eta)\sgn(\eta)\widehat{b_j}(\eta)\,d\eta=0 \end{align*}
	(as $\sgn(\eta)=\sgn(\xi)$ whenever $\widehat{a^{j-3}}(\xi-\eta)\widehat{b_j}(\eta)\neq 0$).
	
	Since $\rz$ is an isomophism of $L^{2p/(p+2)}(\R)$ and of $\hmmz(\R)$, it suffices to bound $\sum_{j\in\Z}a_jb^{j+2}$ in $L^{2p/(p+2)}(\R)$. We do this by duality: let $h\in\mathcal{S}(\R)$ and observe that
	\[ \int\sum_{j\in\Z}a_jb^{j+2}h=\int\sum_{j\in\Z}a_jb^{j+2}h^{j+5}\ls\int\Bigg(\sum_{j\in\Z}2^j\abs{a_j}^2\Bigg)^{1/2}\Bigg(\sum_{j\in\Z}2^{-j}\abs{b^{j+2}}^2\Bigg)^{1/2}(Mh), \]
	as $\mathcal{F}(a_jb^{j+2})$ is supported in $B(0,2^{j+4})$ and as we have the elementary inequality $\abs{h^{j+5}}\ls Mh$.
	Note that $\norm{Mh}_{L^{2p/(p-2)}}\ls\norm{h}_{L^{2p/(p-2)}}$, while the $\ell^2(\Z)$-norm $\Big(\sum_{j\in\Z}\abs{2^{-j/2}b^{j+2}}^2\Big)^{1/2}$ equals
	\[ \Bigg(\sum_{j\in\Z}\abs{\sum_{k=-\infty}^2 2^{-j/2}b_{j+k}}^2\Bigg)^{1/2}
	\le\sum_{k=-\infty}^2\Bigg(\sum_{j\in\Z}\abs{2^{-j/2}b_{j+k}}^2\Bigg)^{1/2}
	\ls\Bigg(\sum_{j\in\Z}2^{-j}\abs{b_j}^2\Bigg)^{1/2} \]
	(as $\sum_{j\in\Z}\abs{2^{-j/2}b_{j+k}}^2=2^k\sum_{j\in\Z}2^{-j}\abs{b_j}^2$), so that, by Plancherel's identity,
	\[ \norm{\Bigg(\sum_{j\in\Z}2^{-j}\abs{b^{j+2}}^2\Bigg)^{1/2}}_{L^2}^2\ls\sum_{j\in\Z}2^{-j}\int\abs{b_j}^2
	=\int\sum_{j\in\Z}2^{-j}\rho_j^2\abs{\widehat{b}}^2\ls\int\abs{\xi}^{-1}\abs{\widehat{b}(\xi)}^2
	=\norm{b}_{\hmmz}^2. \]
	To conclude, using \cite[Theorem~1.6.3]{Tri83} with the multipliers $2^{j/2}\abs{\xi}^{-1/2}(\varrho_{j-1}+\varrho_j+\varrho_{j+1})$ and \cite[Theorem~6.1.2]{Gra14C}, we infer
	\[ \norm{\pa{\sum_{j\in\Z}2^j\abs{a_j}^2}^{1/2}}_{L^p}\ls\norm{\pa{\sum_{j\in\Z}\abs{\ql a_j}^2}^{1/2}}_{L^p}
	\ls\bnorm{\ql a}_{L^p}. \]
	To sum up, by H\"older's inequality we get the desired bound
	\[ \int\sum_{j\in\Z}a_jb^{j+2}h\ls\bnorm{\ql a}_{L^p}\norm{b}_\hmmz\norm{h}_{L^{2p/(p-2)}}. \qedhere \]
\end{proof}

\begin{lemmaen}\label{commhilb}
	Let $a\in H^{s,p}(\R)$ and $b\in L^q(\R)$, with $s>\frac{1}{p}$, $1<p,q<\infty$.
	Then, for any $\gamma>\frac{1}{p}$,
	\[ \norm{\rz(ab)-a\rz(b)}_{H^{s-\gamma,q}}\ls\norm{a}_{H^{s,p}}\norm{b}_{L^q}. \]
\end{lemmaen}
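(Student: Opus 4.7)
The plan is to use Bony's paraproduct decomposition with respect to the inhomogeneous Littlewood--Paley partition of Section \ref{parasec}. Since $s>1/p$, Corollary \ref{hspholder} gives $a\in L^\infty(\R)$, so both $ab$ and $a\rz(b)$ belong to $L^q(\R)\subseteq\mathcal{S}'(\R)$ and the commutator is at least a well-defined $L^q$ function. Writing
\[ ab=\sum_{j\ge 3}a^{j-3}b_j+\sum_{i\ge 0}a_ib^{i+2}, \]
the first piece is the low--high paraproduct, and the Fourier-support argument of Lemma \ref{commstart} applies verbatim: the constraints $|\xi-\eta|<2^{j-2}$ and $2^{j-1}<|\eta|<2^{j+1}$ on the support of $\widehat{a^{j-3}}\ast\widehat{b_j}$ force $\sgn(\xi)=\sgn(\eta)$, so $\rz(a^{j-3}b_j)=a^{j-3}\rz(b_j)$. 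Using $\rz(b_j)=(\rz b)_j$ (both $\rz$ and the projections are Fourier multipliers), the low--high contributions in $\rz(ab)$ and in $a\rz(b)$ cancel, leaving
\[ \rz(ab)-a\rz(b)=\sum_{i\ge 0}c_i,\qquad c_i:=\rz(a_ib^{i+2})-a_i\rz(b^{i+2}), \]
where each $c_i$ has Fourier support inside the ball $B(0,2^{i+4})$.

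I would then estimate each $c_i$ in $H^{s-\gamma,q}$ by two applications of Bernstein's inequality. First, the $L^q$-boundedness of $\rz$ and of the projection $b\mapsto b^{i+2}$ gives
\[ \|c_i\|_{L^q}\ls\|a_i\|_{L^\infty}\|b\|_{L^q}\ls 2^{i/p}\|a_i\|_{L^p}\|b\|_{L^q}, \]
the last step being Bernstein's inequality, since $\widehat{a}_i$ is supported in a ball of radius $\ls 2^i$. A second Bernstein inequality, applied to the frequency-localized $c_i$ and exploiting that $4^{(s-\gamma)k}|(c_i)_k|\le 2^{(s-\gamma)_+(i+4)}|(c_i)_k|$ for $k\le i+4$, then yields
\[ \|c_i\|_{H^{s-\gamma,q}}\ls 2^{(s-\gamma)_+\,i}\|c_i\|_{L^q}\ls 2^{((s-\gamma)_++1/p)i}\|a_i\|_{L^p}\|b\|_{L^q}. \]

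Setting $\alpha:=(s-\gamma)_++1/p$, the assumption $\gamma>1/p$ yields $\alpha<s$ in both regimes ($s\ge\gamma$ gives $\alpha=s-\gamma+1/p<s$, while $s<\gamma$ gives $\alpha=1/p<s$). Combining with the pointwise bound $2^{si}|a_i|\le(\sum_j 4^{sj}|a_j|^2)^{1/2}$ (hence $2^{si}\|a_i\|_{L^p}\ls\|a\|_{F^s_{p,2}}=\|a\|_{H^{s,p}}$), the triangle inequality gives
\[ \|\rz(ab)-a\rz(b)\|_{H^{s-\gamma,q}}\le\sum_{i\ge 0}\|c_i\|_{H^{s-\gamma,q}}\ls\|a\|_{H^{s,p}}\|b\|_{L^q}\sum_{i\ge 0}2^{(\alpha-s)i}, \]
which converges geometrically. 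The main technical obstacle will be rigorously justifying the paraproduct manipulations (convergence of the two sums in the decomposition and commutation of $\rz$ with the infinite summation); as in Lemma \ref{commstart}, this can be handled by first approximating $a$ and $b$ by Schwartz functions with Fourier support in $\R\setminus\{0\}$ and passing to the limit in the final estimate.
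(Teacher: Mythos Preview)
Your argument is correct and shares the same skeleton as the paper's proof: inhomogeneous paraproduct decomposition, exact cancellation of the low--high piece $\sum_{j\ge 3}a^{j-3}b_j$ via the sign constraint, and reduction to estimating $\sum_{i\ge 0}a_ib^{i+2}$ (and its counterpart with $\rz b$). The difference lies only in how this remaining sum is controlled. The paper argues by duality: it pairs with $h\in\mathcal S(\R)$, replaces $h$ by $h^{i+5}$, uses $\abs{b^{i+2}}\ls Mb$ and $\norm{a_i}_{L^\infty}\ls 2^{-i(s-1/p)}\norm{a}_{H^{s,p}}$, and closes with the Triebel--Lizorkin embedding $H^{-(s-\gamma),q'}\hookrightarrow F^{-(s-1/p)}_{q',1}$. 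You instead sum the terms directly, using Bernstein to bound $\norm{a_i}_{L^\infty}$ and a second multiplier estimate to pass from $L^q$ to $H^{s-\gamma,q}$ on each frequency-localised $c_i$; this avoids the duality step and the explicit Besov/Triebel embedding, trading them for a transparent geometric sum. Both routes exploit exactly the same gain $2^{-(s-1/p)i}$ from $a_i$, and the condition $\gamma>1/p$ enters in both as the summability threshold. One small cosmetic point: in your pointwise inequality you wrote $4^{(s-\gamma)k}$ where $2^{(s-\gamma)k}$ is what the $F^{s-\gamma}_{q,2}$ norm actually carries, but this does not affect the argument.
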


\begin{proof}
	We can assume $\widehat{a},\widehat{b}\in C^\infty_c(\R)$.
	We use the inhomogeneous Littlewood--Paley decomposition, so that $a=\sum_{j\ge 0} a_j$ and $b=\sum_{j\ge 0} b_j$,
	where $ \hat{a}_j=\varrho_j\hat{a}$.
	
	As in the previous proof, we need only estimate $\norm{\sum_{j\ge 0}a_jb^{j+2}}_{H^{s-\gamma,q}}$, as $\rz$ is an isomorphism of $H^{s-\gamma,q}(\R)$ and of $L^q(\R)$.	
	We have $\norm{a_j}_{L^\infty}\ls 2^{-j(s-1/p)}\norm{a}_{H^{s,p}}$ (see the proof of Corollary \ref{hspholder}).
	Given $h\in\mathcal{S}(\R)$, observe that $\mathcal{F}(a_j b^{j+2})$ vanishes outside $B(0,2^{j+4})$, so
	\[ \abs{\int\sum_{j\ge 0}a_jb^{j+2}h}
	=\abs{\sum_{j\ge 0}\int a_jb^{j+2}h^{j+5}}
	\ls\norm{a}_{H^{s,p}}\norm{Mb}_{L^q}\norm{h}_{F_{q',1}^{-(s-1/p)}}, \]
	thanks to the pointwise inequalities $\abs{b^{j+2}}\ls Mb$ ($Mb$ being the maximal function of $b$) and
	\[ \abs{\sum_{j\ge 0}a_j b^{j+2}h^{j+5}}\ls\sum_{j\ge 0}2^{-j(s-1/p)}\norm{a}_{H^{s,p}}(Mb)\abs{h^{j+5}}\ls \norm{a}_{H^{s,p}}(Mb)\sum_{j\ge 0}2^{-j(s-1/p)}\abs{h_j}. \]
	But $H^{-(s-\gamma),q'}=F_{q',2}^{-(s-\gamma)}\hookrightarrow F_{q',1}^{-(s-1/p)}$ (see \cite[Proposition~2.3.2/2]{Tri83}),
	so
	\[ \abs{\int\sum_{j\ge 0}a_jb^{j+2}h}
	\ls\norm{a}_{H^{s,p}}\norm{b}_{L^q}\norm{h}_{H^{-(s-\gamma),q'}}. \qedhere \]
\end{proof}

 We will implicitly use many times the following result.

\begin{lemmaen}\label{lemmapt}
	If $u\in H^{s,p}_{loc}(\R)$ for some $s\ge 1$ and $1<p<\infty$, then $P^T(u)\in H^{\min(s,k-1),p}_{loc}(\R)$.
\end{lemmaen}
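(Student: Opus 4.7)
As $\N$ is $C^k$-smooth, the tangential projection $P^T\in C^{k-1}(\N,\R^{m\times m})$. Since $s\ge 1>1/p$, Corollary \ref{hspholder} shows $u\in C^0_{loc}(\R,\R^m)$ is locally bounded, so $P^T(u)$ is well-defined and bounded. Fixing $\chi\in C^\infty_c(\R)$, it suffices to prove $\chi P^T(u)\in H^{\sigma,p}(\R)$, with $\sigma:=\min(s,k-1)$.

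For the integer case $\sigma=j\in\{1,\dots,k-1\}$, I would proceed by induction on $j$, proving the slightly more general claim: for any $C^\ell$-smooth map $\Phi\colon\N\to\R^M$ with $\ell\ge j$ and any $w\in H^{j,p}_{loc}(\R,\N)\cap L^\infty_{loc}$, one has $\Phi(w)\in H^{j,p}_{loc}$. The base case $j=1$ is the chain rule $\nabla(\Phi(w))=d\Phi(w)\cdot\nabla w$: as $d\Phi\in C^0$, the factor $d\Phi(w)$ is in $L^\infty_{loc}$, so the product is in $L^p_{loc}$. For $j\ge 2$, Fa\`a di Bruno's formula expresses $\nabla^j(\Phi(w))$ as a polynomial in terms of the form $(D^i\Phi)(w)$ for $1\le i\le j$ and in partial derivatives of $w$ of orders summing to $j$. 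The inductive hypothesis, applied to the $C^{\ell-i}$ map $D^i\Phi$ (valid as $\ell-i\ge j-i$) and order $j-i$, gives $(D^i\Phi)(w)\in H^{j-i,p}_{loc}$; combined with the closure of $H^{m,p}_{loc}\cap L^\infty_{loc}$ under pointwise products (for any nonnegative integer $m$ and $1<p<\infty$), the products lie in $L^p_{loc}$, closing the induction.

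For fractional $\sigma$, I would use real interpolation: the previous step shows that the composition operator $w\mapsto P^T(w)$ maps both $H^{\lfloor\sigma\rfloor,p}_{loc}\cap L^\infty_{loc}$ and $H^{\lceil\sigma\rceil,p}_{loc}\cap L^\infty_{loc}$ continuously to themselves, with operator norm depending only on $\norm{w}_{L^\infty}$, and $H^{\sigma,p}$ can be identified as a real interpolation space between the two surrounding integer-order spaces. Alternatively, one can invoke the standard composition theorems for Bessel potential spaces from the monograph of Runst--Sickel, applied to a compactly supported $C^{k-1}$ extension of $P^T$ to a tubular neighborhood of $\N$, or argue directly via the Littlewood--Paley characterization $H^{\sigma,p}=F^{\sigma}_{p,2}$ together with paraproduct decompositions of the identity
\[ P^T(u(x))-P^T(u(y))=\int_0^1 dP^T((1-t)u(y)+tu(x))\,dt\cdot(u(x)-u(y)). \]
The main obstacle is the fractional case, which is nonetheless classical in harmonic analysis; the integer case is a routine iteration of the chain and product rules.
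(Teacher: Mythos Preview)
Your integer-case argument via Fa\`a di Bruno is correct in one dimension (the key point being that $H^{m,p}_{loc}(\R)\subset L^\infty_{loc}$ for $m\ge 1$, so in each Fa\`a di Bruno term all factors but at most one are bounded). The paper does not spell any of this out: it disposes of $s=1$ as the chain rule and, for $s>1$, directly invokes the composition theorem of Brezis--Mironescu \cite[Theorem~5.2]{BM01}, after recording the auxiliary fact $u\in W^{1,sp}_{loc}$ (needed as a hypothesis there) via \cite[Lemma~3.1]{BM01}. Your second and third alternatives for the fractional case---quoting Runst--Sickel, or a direct paraproduct argument---are exactly this black-box route, so in substance your proof and the paper's coincide.

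Your \emph{first} alternative for the fractional case, real interpolation between the integer-order results, is however a genuine gap as written. The operator $w\mapsto P^T(w)$ is nonlinear, and boundedness of a nonlinear map on two endpoint spaces does not by itself yield boundedness on the real-interpolation spaces between them; speaking of an ``operator norm depending only on $\norm{w}_{L^\infty}$'' is not the right hypothesis. One would need at least a uniform Lipschitz estimate on $L^\infty$-bounded sets and then a nonlinear interpolation theorem (Peetre, Tartar), and even the Lipschitz property is delicate when $k=2$ since $P^T$ is then only $C^1$. Drop that option and keep one of the other two.
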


\begin{proof}
	We can assume that $1\le s\le k$. The thesis is trivial for $s=1$, while for $s>1$ it follows from \cite[Theorem~5.2]{BM01}, the map $P^T$ being $C^{k-1}$-smooth. Notice that $u\in W^{1,sp}_{loc}(\R)$ by \cite[Lemma~3.1]{BM01} with $(p,q,s):=(sp,2,1)$, $(p_1,q_1,s_1):=(p,2,s)$, $(p_2,q_2,s_2):=(\infty,\infty,0)$ and the fact that $u\in H^{1,p}_{loc}(\R)\subseteq L^\infty_{loc}(\R)$.
\end{proof}

%

We need the following lemmata.
We use the dyadic partition of unity $(\varrho_j)_{j=0}^\infty\subseteq C^\infty_c(\R)$ introduced in Appendix \ref{functapp}.

\begin{lemmaen}\label{hmzintrep}
	If $f\in\dot H^{1/2}(\R)$ and $\rho\in C^\infty_c(B(0,1))$, we have
	\begin{equation}\label{ident} \big\langle(-\Delta)^{1/2}f,\rho\big\rangle=\sum_{j=0}^\infty\int((-\Delta)^{1/2}\rho)(\varrho_j f). \end{equation}
\end{lemmaen}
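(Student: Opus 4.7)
The plan is to split the identity into two simpler statements: the self-adjointness $\langle\ml f,\rho\rangle = \int f\,\ml\rho\,dx$ interpreted as a classical integral, and the pointwise a.e.\ Littlewood--Paley decomposition $f = \sum_j\varrho_j f$ (with $\varrho_j f := \mathcal F^{-1}(\varrho_j\hat f) = (\mathcal F^{-1}\varrho_j)*f$) combined with term-by-term integration against $\ml\rho$.

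First, I verify that the right-hand side is absolutely convergent and identify its sum with $\int f\,\ml\rho$. For $j\ge 1$, the Fourier support of $f_j := \varrho_j f$ lies in $|\xi|\sim 2^j$ where $|\xi|^{1/2}\hat f\in L^2$ by \eqref{hmzfourier}, which gives $\|f_j\|_{L^2}\lesssim 2^{-j/2}\|f\|_{\dot H^{1/2}}$; by orthogonality of the Fourier projections, $S := \sum_{j\ge 1}f_j$ converges in $L^2(\R)$. On the other hand, $\ml\rho = \rz\rho'$ enjoys the sharp decay $|\ml\rho(x)|\lesssim(1+|x|)^{-2}$, obtained by expanding $(x-y)^{-1}$ in the Hilbert transform representation and using $\int\rho' = 0$, so $\ml\rho\in L^1\cap L^2(\R)$. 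Since $\chi + \sum_{j\ge 1}\varrho_j\equiv 1$ pointwise and $\chi_N\hat\phi\to\hat\phi$ in $\mathcal S$ for any $\phi\in\mathcal S$, the identity $f = f_0 + S$ holds in $\mathcal S'(\R)$, hence a.e.\ as locally integrable functions (using Lemma \ref{hmzannest} for $f$). The $L^2$-convergence of $S$ against $\ml\rho\in L^2$ then allows exchanging sum and integral:
\[\sum_{j=0}^\infty\int f_j\,\ml\rho = \int(f_0 + S)\,\ml\rho = \int f\,\ml\rho,\]
the last integral being absolutely convergent by the above decay of $\ml\rho$ versus the polynomial growth of $f$.

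Next, I prove $\langle\ml f,\rho\rangle = \int f\,\ml\rho$ by Schwartz approximation. Lemma \ref{hmzapprox} yields $v_k\in\mathcal S(\R)$ and constants $c_k\in\R$ such that $g_k := v_k + c_k$ satisfies $\|f - g_k\|_{\dot H^{1/2}}\to 0$ and $(g_k)_{B(0,1)} = (f)_{B(0,1)}$. For each such Schwartz $g_k$, Plancherel together with $\ml c_k = 0$ and $\int\ml\rho = \widehat{\ml\rho}(0) = 0$ gives $\langle\ml g_k,\rho\rangle = \int\ql g_k\,\ql\rho\,dx = \int g_k\,\ml\rho\,dx$. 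Passing to the limit: $\langle\ml g_k,\rho\rangle\to\langle\ml f,\rho\rangle$ since $\ql g_k\to\ql f$ in $L^2$ (Lemma \ref{hmztol2}); and for $h_k := g_k - f$, splitting $\R$ into dyadic annuli $A_j$ around the origin, the estimate
\[\Big|\int h_k\,\ml\rho\Big|\le\sum_j\|h_k\|_{L^2(A_j)}\|\ml\rho\|_{L^2(A_j)}\lesssim\sum_j(j+1)\,2^{-j}\,\|h_k\|_{\dot H^{1/2}}\to 0,\]
obtained from Lemma \ref{hmzannest} (with $(h_k)_{B(0,1)} = 0$) and $\|\ml\rho\|_{L^2(A_j)}\lesssim 2^{-3j/2}$, forces $\int g_k\,\ml\rho\to\int f\,\ml\rho$. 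Combined with the previous step, this proves the claim.

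The main obstacle is the $j = 0$ piece: since $\widehat f$ may contain a Dirac mass at the origin (any constant is admissible in $\dot H^{1/2}$), a direct Parseval pairing of $\chi\hat f$ against $2\pi|\xi|\hat\rho(-\cdot)$ is awkward because $2\pi|\xi|$ is merely continuous (not smooth) at $0$. The approximation by $g_k = v_k + c_k$ from Lemma \ref{hmzapprox}, which isolates the constant mode into the scalar $c_k$ and leaves $v_k$ as a Schwartz function with $\widehat{v_k}$ compactly supported away from $0$, converts this distributional difficulty into a routine Schwartz calculation whose convergence is controlled quantitatively by $\|f - g_k\|_{\dot H^{1/2}}$.
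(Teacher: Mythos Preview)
You have misread the notation. In this lemma $\varrho_j f$ denotes the \emph{pointwise product} $\varrho_j(x)f(x)$, using $(\varrho_j)_{j\ge 0}$ as a partition of unity in physical space; it is \emph{not} the Littlewood--Paley projection $\mathcal F^{-1}(\varrho_j\hat f)$. This is evident from the sentence immediately preceding the lemma (``We use the dyadic partition of unity $(\varrho_j)_{j=0}^\infty\subseteq C^\infty_c(\R)$''), from the paper's proof, which bounds the $j$-th term by $2^{-2j}\|f\|_{L^1(B(0,2^{j+1})\setminus B(0,2^{j-1}))}$ (a physical-support estimate), and from the use of the lemma in Lemma~\ref{lemmaw}. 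Under the correct reading your central estimate $\|f_j\|_{L^2}\lesssim 2^{-j/2}\|f\|_{\dot H^{1/2}}$, which hinges on Fourier-side support, is simply inapplicable: the physical product $\varrho_j f$ has no frequency localisation, and its $L^2$ norm can be as large as $\|f\|_{L^2(B(0,2^{j+1}))}\sim (j+1)2^{j/2}$ by Lemma~\ref{hmzannest}.

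That said, much of your scaffolding survives. Your decay $|\ml\rho(x)|\lesssim(1+|x|)^{-2}$ (via $\ml\rho=\rz\rho'$) is exactly what the paper uses, and combined with the $L^1$ growth from Lemma~\ref{hmzannest} it gives the summability $\sum_j|\int(\ml\rho)(\varrho_j f)|\lesssim\|f\|_{L^1(B(0,1))}+\|f\|_{\dot H^{1/2}}$. From there both sides of \eqref{ident} are continuous in $f$ for the topology of Lemma~\ref{hmzapprox}, so it suffices to check the identity on $\mathcal S(\R)+\R$; for $f\in\mathcal S$ it is just $\int(\ml\rho)f=\langle\ml f,\rho\rangle$, and for $f=c$ constant one uses $\sum_{j\le N}\widehat{\varrho_j}=\widehat{\chi_N}\to\delta_0$. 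Your detour through the intermediate identity $\langle\ml f,\rho\rangle=\int f\,\ml\rho$ is unnecessary (and slightly delicate when $\hat f$ carries mass at the origin), but the approximation idea is the same as the paper's.
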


\begin{proof}
	Notice that $f\in L^1_{loc}(\R)$ and $(-\Delta)^{1/2}\rho\in L^\infty(\R)$, so each term in the right-hand side makes sense. By the remark after Lemma \ref{hmztol2}, the left-hand side equals $2\pi\int\abs{\xi}\widehat{f}(\xi)\obar{\widehat\rho(\xi)}\,d\xi$.
	
	For any $j\ge 2$, by Lemma \ref{hmzannest} and the fact that $\mathcal{F}^{-1}(\abs{\xi})\in C^\infty(\R\setminus\set{0})$ is homogeneous of degree $-2$ (see \cite[Proposition~2.4.8]{Gra14C}),
	\[ \abs{\int((-\Delta)^{1/2}\rho)(\varrho_j f)}\ls 2^{-2j}\norm{f}_{L^1(B(0,2^{j+1})\setminus B(0,2^{j-1}))}\ls 2^{-2j}\norm{f}_{L^1(B(0,1))}+2^{-j}(j+1)\norm{f}_{\dot H^{1/2}}. \]
	Therefore  the sum in the right-hand side of \eqref{ident} converges and
	is bounded by $\norm{f}_{L^1(B(0,1))}+\norm{f}_{\dot H^{1/2}}$.
	Hence, by Lemma \ref{hmzapprox}, it is enough to prove the identity \eqref{ident} on $\mathcal{S}(\R)+\R$.
	
	If $f\in\mathcal{S}(\R)$, the identity is trivially satisfies since in this case we have
 	\[ \sum_{j=0}^\infty\int((-\Delta)^{1/2}\rho)(\varrho_j f)=\int((-\Delta)^{1/2}\rho)f=2\pi\int\abs{\xi}\widehat{f}(\xi)\obar{\widehat{\rho}(\xi)}\,d\xi. \]
	If $f=c$ is constant then
	\[ \big\langle(-\Delta)^{1/2}c,\rho\big\rangle=0=2\pi c\lim_{N\to\infty}\int\abs{\xi}\widehat{\rho}(\xi)\sum_{0}^N\obar{\widehat{\varrho_j}(\xi)}\,d\xi=\sum_{j=0}^\infty\int((-\Delta)^{1/2}\rho)(\varrho_j c), \]
	the second equality being true since $\sum_{0}^N\obar{\widehat{\varrho_j}(\xi)}$ approximates the Dirac mass $\delta_0$ as $N\to\infty$.
\end{proof}

 \begin{lemmaen}\label{lemmaw}
	Assume $w\in\dot H^{1/2}(\R)$ is supported outside $B(x_0,2)$, for some $x_0\in\R$. Then the distribution $(-\Delta)^{1/2}w$ restricts to a $C^\infty$ function on $B(x_0,1)$.
\end{lemmaen}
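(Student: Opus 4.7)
The plan is to exhibit an explicit smooth function $g \in C^\infty(B(x_0,1))$ and verify that $\ml w = g$ as distributions on $B(x_0,1)$. After a translation we may assume $x_0 = 0$.

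First, I would set
\[ g(x) := -c_1 \int_{\R \setminus B(0,2)} \frac{w(y)}{(x-y)^2}\,dy, \quad x \in B(0,1), \]
where $c_1 > 0$ is the constant such that $\ml\varphi(y) = c_1\,\mathrm{P.V.}\int \frac{\varphi(y)-\varphi(z)}{(y-z)^2}\,dz$ for $\varphi \in \mathcal{S}(\R)$. Absolute convergence follows by partitioning $\R \setminus B(0,2)$ into dyadic annuli $A_j = B(0,2^{j+1}) \setminus B(0,2^j)$, $j \geq 1$: Cauchy--Schwarz together with Lemma \ref{hmzannest} yield
\[ \int_{A_j}\frac{\abs{w(y)}}{\abs{x-y}^2}\,dy \ls \abs{A_j}^{1/2}\norm{w}_{L^2(A_j)}\cdot 2^{-2j} \ls (j+1)\,2^{-j}, \]
which is summable in $j$. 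Since $(x-y)^{-2}$ and all its $x$-derivatives are uniformly $\ls \abs{y}^{-2}$ for $(x,y)$ in compact subsets of $B(0,1) \times (\R \setminus B(0,2))$, the same bound justifies differentiating under the integral sign, hence $g \in C^\infty(B(0,1))$.

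Next, I would establish $\ml w = g$ distributionally on $B(0,1)$. Fix $\rho \in C^\infty_c(B(0,1))$. Lemma \ref{hmzintrep} gives
\[ \langle \ml w, \rho\rangle = \sum_{j \geq 0}\int (\ml\rho)\,\varrho_j\,w. \]
The pointwise formula $\ml\rho(y) = -c_1\int \rho(z)/(y-z)^2\,dz$ is valid for $y \notin \supp\rho$ (in particular for $\abs{y} > 1$) and yields $\abs{\ml\rho(y)} \ls (1+\abs{y})^{-2}$, while $\sum_j|\varrho_j(y)| \leq 3$ pointwise with only finitely many terms nonzero at each $y$. Combining this with the dyadic estimate from the previous paragraph shows that $(\ml\rho)\,w \in L^1(\R)$ and that dominated convergence collapses the sum to $\int (\ml\rho)\,w$.

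Finally, invoking the pointwise formula above for $y \in \supp w \subseteq \R \setminus B(0,2)$ and applying Fubini (again justified by the absolute convergence), I obtain
\[ \int (\ml\rho)\,w = -c_1 \int_{\R \setminus B(0,2)} w(y) \int_{B(0,1)} \frac{\rho(z)}{(y-z)^2}\,dz\,dy = \int_{B(0,1)} g(z)\,\rho(z)\,dz, \]
so $\ml w = g$ on $B(0,1)$. The only delicate point is controlling the (possibly unbounded) $L^2_{\mathrm{loc}}$-growth of $w$ at infinity via Lemma \ref{hmzannest} against the mere $\abs{y}^{-2}$ decay of $\ml\rho$; this is exactly what is needed to close the dyadic summation and the two applications of Fubini.
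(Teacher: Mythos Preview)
Your proof is correct and in fact more explicit than the paper's. Both approaches reduce (via translation) to $x_0=0$, invoke Lemma~\ref{hmzintrep} to interpret $\langle\ml w,\rho\rangle$, and control the possibly growing $L^2_{loc}$-norm of $w$ against the $|y|^{-2}$ decay coming from the kernel of $\ml$, using Lemma~\ref{hmzannest}. The difference lies in how smoothness is extracted. The paper tests $\ml w$ against $(-\Delta)^k\rho$ for every integer $k\ge 0$ and shows the pairing is bounded by $\norm{\rho}_{L^2}$, concluding that $(-\Delta)^k f\in L^2(B(0,1))$ for all $k$ and hence $f\in C^\infty$. You instead write down the candidate restriction $g(x)=-c_1\int_{\R\setminus B(0,2)}\frac{w(y)}{(x-y)^2}\,dy$ explicitly, verify $g\in C^\infty$ by differentiating under the integral sign, and then identify $\ml w=g$ via Fubini. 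Your route has the advantage of producing the actual formula for the restriction, while the paper's argument is marginally more compact and avoids the (easy) justification of Fubini and differentiation under the integral. Either way, the analytic content is the same dyadic summability $\sum_j (j+1)2^{-j}<\infty$.
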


\begin{proof}
	We can assume $x_0=0$. For $\rho\in C^\infty_c(B(0,1))$ and $k\ge 0$ integer,
	Lemma \ref{hmzintrep} gives
	\begin{align*} &\big\langle(-\Delta)^{1/2}w;(-\Delta)^k\rho\big\rangle=\sum_{j\ge 1}\int ((-\Delta)^{k+1/2}\rho)\cdot(\varrho_j w) \\
	&\ls\sum_{j\ge 1}2^{-(2k+2)j}\norm{\rho}_{L^1}\norm{w}_{L^1(B(0,2^{j+1})\setminus B(0,2^{j-1}))}
	\ls\sum_{j\ge 1}2^{-(2k+2)j}\norm{\rho}_{L^2}\cdot (j+1)2^j\ls\norm{\rho}_{L^2}, \end{align*}
	where the inequalities follow from \cite[Proposition~2.4.8]{Gra14C} and Lemma \ref{hmzannest}.
	So, calling $f$ the restriction of $(-\Delta)^{1/2}w$ to $B(0,1)$, we have $(-\Delta)^k f\in L^2(B(0,1))$. Equivalently, $\frac{d^{2k}f}{dx^{2k}}\in L^2(B(0,1))$. This implies that $\frac{d^{2k-1}f}{dx^{2k-1}}\in C^0(B(0,1))$ for all $k\ge 0$, hence $f\in C^\infty(B(0,1))$.
\end{proof}
 
\begin{proof}[Proof of Theorem \ref{regul}.]
We fix $x_0\in\R$ and we take  a cut-off function $\eta\in C^\infty_c(B_1(x_0))$ satisfying $\eta=1$ in a neighborhood of $x_0$. Recall from Lemma \ref{circletoline} that
\[ P^T(w)\ml w=h,\quad P^N(w)\nabla w=0, \]
with $h=-\frac{2}{1+x^2}P^T(w)\pa{R_\ell((f_j)_{j=1}^k)\circ\Pi^{-1}}\in L^1\cap L^{\infty}(\R)$.
Therefore we have
\begin{align}\label{eq:commeasy}
	\begin{aligned}\eta\nabla w=&\eta P^T(w)\nabla w=-\eta P^T(w)\rz(-\Delta)^{1/2}w \\
=&\rz(\eta P^T(w)(-\Delta)^{1/2}w)-\eta P^T(w)\rz(-\Delta)^{1/2}w-\rz(\eta h).\end{aligned}
\end{align}
We remark that $w\in H^{1/2,p}_{loc}(\R,\R^m)$: by Lemma \ref{qlintrep}
\[ \babs{\ql(\psi w)-\psi\ql w}(x)\ls\int\frac{\abs{\psi(x)-\psi(y)}}{\abs{x-y}^{3/2}}\abs{w(y)}\,dy\ls g*\abs{w}(x)\in L^\infty(\R) \]
with $g(x):=\min(\abs{x}^{1/2},\abs{x}^{-3/2})\in L^1(\R)$, for any $\psi\in C^\infty_c(\R)$. Hence
$\ql(\psi w)$ lies both in $L^2(\R)$ and in $L^p(\R)+L^\infty(\R)$ and thus it lies in $L^p(\R)$, as well. Since trivially $\psi w\in L^p(\R)$, \cite[Theorem~1.6.3]{Tri83} gives $\psi w\in F_{p,2}^{1/2}(\R)=H^{1/2,p}(\R)$.

Thus $\eta P^T(w)\in H^{1/2,p}(\R)$ and, using again \cite[Theorem~1.6.3]{Tri83} (with multipliers $\abs{\xi}^{1/2}(1+\abs{\xi}^2)^{-1/4}(\varrho_{j-1}+\varrho_j+\varrho_{j+1})$ for $j\in\Z$) and \cite[Theorem~6.1.2]{Gra14C}, we infer that $\eta P^T(w)$ and $\ml w$ satisfy the hypotheses of Lemma \ref{commstart}. So, in view of \eqref{eq:commeasy}, we get
$\eta\nabla w\in L^{2p/(p+2)}(\R)$, i.e. $w\in H^{1,\tilde p}_{loc}(\R)$ with $\tilde p=2p/(p+2)$.

We now fix another cut-off function $\phi\in C^\infty_c(\R)$ such that $\phi=1$ on $B(x_0,2)$ and we set
\[ w_1:=\phi w, \quad w_2:=(1-\phi)w. \]
Lemma \ref{lemmaw} yields that $(-\Delta)^{1/2}w_2\in C^{\infty}(B(x_0,1))$.
Now assume that we already know $w\in H^{s,\tilde p}_{loc}(\R)$ for some real $s\ge 1$:
by Lemma \ref{lemmapt} we get
$h\in H^{\min(s,k-1),\tilde p}_{loc}(\R)$, so $\tilde h:=P^T(w)\ml w_1=-P^T(w)\ml w_2+h$ restricts to a function in $H^{\min(s,k-1)}_{loc}(B(x_0,1))$. We rewrite \eqref{eq:commeasy} as
\begin{align*} \eta\nabla w=&\eta P^T(w)\nabla w_1=-\eta P^T(w)\rz\ml w_1 \\
=&\rz(\eta P^T(w)\ml w_1)-\eta P^T(w)\rz\ml w_1-\rz(\eta\tilde h). \end{align*}
The commutator on the right-hand side belongs to $H^{\min(s,k-1)-\gamma,\tilde p}(\R)$, for any $\gamma>\frac{1}{\tilde p}$, thanks to Lemma \ref{commhilb} (applied with $p=q:=\tilde p$). Therefore $\eta\nabla w\in H^{\min(s,k-1)-\gamma}(\R)$, which implies
$w\in H^{\min(s+1,k)-\gamma,\tilde p}_{loc}(\R)$. Iterating this procedure
we eventually get
\[ w\in\bigcap_{\gamma>1/\tilde p}H^{k-\gamma,\tilde p}_{loc}(\R). \]
We now show that, for any fixed $1<p<\infty$,
\[ w\in\bigcap_{\gamma>1/p} H^{k-\gamma,p}_{loc}(\R). \]
Since $k\ge 2$, we know that $w\in H^{1,q}_{loc}(\R)$ for all $q<\frac{\tilde p}{2-\tilde p}$ (because $H^{2-\gamma,\tilde p}_{loc}(\R)\subseteq H^{1,q}_{loc}(\R)$ with $\frac{1}{q}=\gamma+\frac{1}{\tilde p}-1$ whenever $\gamma<1$, see \cite[Theorem~2.7.1]{Tri83}).
Proceeding as above we obtain
\[ w\in\bigcap_{\gamma>1/q}H^{k-\gamma,q}_{loc}(\R) \]
for all $q<\frac{\tilde p}{2-\tilde p}$ (notice that $\frac{\tilde p}{2-\tilde p}>\tilde p$).
Iterating this with $q$ in place of $\tilde p$, we will eventually reach an exponent $q\ge 2$ and hence, as $\bigcap_{\gamma>1/q}H^{2-\gamma,q}_{loc}(\R)\subseteq\bigcap_{1<r<\infty}H^{1,r}_{loc}(\R)$, all exponents in $(1,\infty)$. This proves the assertion. Finally, applying Corollary \ref{hspholder},
\[ w\in\bigcap_{1<p<\infty}\bigcap_{\epsilon>0}H^{k-1/p-\epsilon,p}_{loc}(\R)\subseteq\bigcap_{0<\delta<1} C^{k-1,\delta}_{loc}(\R). \]
So $u\in C^{k-1,\delta}(\de S)$ for all $0<\delta<1$. In particular, if $\N$ is $C^\infty$-smooth then $u$ is $C^\infty$ as well.
\end{proof}

\nocite{*}
\printbibliography

\end{document}